% *****************************************************************************
% 	DOUBLE DISK BUNDLES
% ************************************************
%*****************************
% 	by JDV, FGG and MK.
%******************************************************************************
%	CREATED: 				AUGUST 15, 2017
%******************************************************************************
%******************************************************************************
%
%
%		TO DO LIST
%       
%		AGREE ON: 
%				- AMERICAN VS BRITISH SPELLING 	[AMERICAN]
%				- PASSIVE VS ACTIVE VOICE       [PASSIVE]
%				- EXPLICITLY WRITE INTEGER COEFFICIENTS IN THE HOMOLOGY/COHOMOLOGY GROUPS [REMIND WHEN NEEDED]
%				- BLACKBOARD BOLD VS BOLD FOR COMPLEX, RATIONALS, INTEGERS, QUATERNIONS, OCTONIONS	[BOLD]
%
%
%
%******************************************************************************
%
%	NOTES
%
%
%

% LABELLING CONVENTION

% THEOREM: 		T:NAME_OF_THM
% PROPOSITION: 	P:NAME_OF_PROP
% LEMMA:      	L:NAME_OF_LEM
% COROLLARY: 	C:NAME_OF_COR
% DEFINITION:	D:NAME_OF_DEF
% EQUATION:		E:NAME_OF_EQ
% TABLE:		TB:NAME_OF_TABLE

\documentclass[12pt]{amsart}

\usepackage{amsthm, amsmath, amssymb, url, verbatim, amscd}%, diagrams}
\usepackage[margin=1in]{geometry}

\usepackage{hyperref}
% COLORED HYPERLINKS
\hypersetup{
colorlinks,
allcolors=blue
}

\usepackage{enumerate}

% TABLES PACKAGES

\usepackage{latexsym}
\usepackage{amssymb}
\usepackage{amsmath}
\usepackage{amsfonts}
\usepackage[mathscr]{eucal}
\usepackage{color}
\usepackage{tabulary}
\usepackage{stmaryrd}

% DIAGRAM PACKAGES

\usepackage[all]{xy}
\usepackage{amscd}

% GRAPHIC PACKAGES

\usepackage{graphicx}
\usepackage{epsfig}
\usepackage{relsize}
\usepackage{tikz}
\usetikzlibrary{matrix}

% TABLES PACKAGES

%\usepackage[table,xcdraw]{xcolor}
\usepackage{longtable}
\usepackage{rotating}
\usepackage{array}
\usepackage{multicol}
\usepackage{multirow}
\usepackage{makecell}
\usepackage{mathtools}
  \usepackage{cellspace}
    \cellspacetoplimit 5pt
    \cellspacebottomlimit 3pt
    \setlength{\arraycolsep}{7pt}
\usepackage{booktabs}

\usepackage{multicol}
\usepackage{color}
%\usepackage{pifont}

% COMMENT THIS OUT TO HIDE LINE NUMBERS

\usepackage{lineno}
%\linenumbers

% STRIKE OUT TEXT

\usepackage[normalem]{ulem}

% EXAMPLE

%\sout{Hello World}

\frenchspacing

%%%%% theorem, definition, remark, and equation environments %%%%%
\theoremstyle{plain}
\newtheorem{theorem}{Theorem}[section]%\newtheorem*{nonumbertheorem}{Theorem}

\newtheorem{lemma}[theorem]{Lemma}
\newtheorem{proposition}[theorem]{Proposition}

\newtheorem{main}{Theorem}
\newtheorem{Corollary}[main]{Corollary}

% Martin's additions
\newtheorem{thm}[theorem]{Theorem}
\newtheorem{cor}[theorem]{Corollary}
\newtheorem{lem}[theorem]{Lemma}
\newtheorem{prop}[theorem]{Proposition}

%

%%%%%
\theoremstyle{definition}

%%%%%
\theoremstyle{remark}
\newtheorem{remark}[theorem]{Remark}

\newtheorem*{ack}{Acknowledgements}
%%%%%
\numberwithin{equation}{section}

% LETTER NUMBERED THEOREMS AND COROLLARIES

%\newtheorem{Theorem}{Theorem}
%\renewcommand*{\thetheorem}{\Alph{theorem}}

%\newtheorem{Corollary}[theorem]{Corollary}

\hypersetup{
colorlinks,
allcolors=blue
}
%Greek letters

%%%%% Rings, fields, Lie groups, etc. %%%%%
\newcommand{\N}{\mathbb{N}}
\newcommand{\Z}{\mathbb{Z}}
\newcommand{\Q}{\mathbb{Q}}
\newcommand{\R}{\mathbb{R}}
\newcommand{\C}{\mathbb{C}}

\DeclareMathOperator{\SO}{SO}
\DeclareMathOperator{\Or}{O}
\DeclareMathOperator{\SU}{SU}

\DeclareMathOperator{\Spin}{Spin}
\DeclareMathOperator{\Sp}{Sp}
\newcommand{\Gtwo}{\operatorname G_2}
\newcommand{\Ffour}{\operatorname F_4}

\newcommand{\Eseven}{\operatorname E_7}
\newcommand{\Eeight}{\operatorname E_8}
\newcommand{\bq}{/\!\!/}
\newcommand{\CP}{\mathbf{C}P}
\newcommand{\ox}{\otimes}
\newcommand{\rk}{\operatorname{rank}}

%%%%% Common functions %%%%%
\DeclareMathOperator{\codim}{codim}

\DeclareMathOperator{\cat}{cat}

\DeclareMathOperator{\rank}{rank}

%

% Martin's additions

\DeclareMathOperator{\im}{Im}

\DeclareMathOperator{\Diff}{Diff}

\newcommand{\sph}{\mathbf{S}}

\newcommand{\RP}{\mathbf{R}P}

\renewcommand{\leq}{\leqslant}
\renewcommand{\geq}{\geqslant}
\newcommand{\rnorm}{\trianglerighteqslant}

\def\ol{\overline}
\def\ul{\underline}
\def\wt{\widetilde}

\def\x{\times}
\def\ox{\otimes}

\def\lra{\longrightarrow}

\def\In{\subseteq}

\def\bs{\backslash}

\def\<{\langle}
\def\>{\rangle}

\def\ve{\varepsilon}
\def\vphi{\varphi}

\def\bpm{\begin{pmatrix}}
\def\epm{\end{pmatrix}}
\def\bvm{\begin{vmatrix}}
\def\evm{\end{vmatrix}}
\def\bsm{\left(\begin{smallmatrix}}
\def\esm{\end{smallmatrix}\right)}

\def\beq{\begin{equation}}
\def\eeq{\end{equation}}

%%%%%%%%%%%%%START OF DOCUMENT INFO%%%%%%%%%%%%%%%%%%%%%%%%%%%%%%%

\begin{document}

% TITLE
 \title{Manifolds that admit a double disk-bundle decomposition}

% AUTHOR 1
\author[J.\ DeVito]{Jason DeVito}
\address[J.\ DeVito]{Department of Mathematics, University of Tennessee at Martin, USA.}
\email{jdevito1@utm.edu}

% AUTHOR 2
\author[F.~Galaz-Garc\'{i}a]{Fernando\ Galaz-Garc\'{i}a}%$^*$}
\address[F.~Galaz-Garc\'{i}a]{Department of Mathematical Sciences, Durham University, United Kingdom.	
					    }
\email{fernando.galaz-garcia@durham.ac.uk}

% AUTHOR 3

\author[M.~Kerin]{Martin Kerin}
\address[M.~Kerin]{School of Mathematical \& Statistical Sciences, NUI
	Galway, Ireland.}
\email{martin.kerin@nuigalway.ie}

% MATH SUBJECT CLASSIFICATION AND KEYWORDS

\subjclass[2010]{primary: 53C12, secondary: 55P62, 55R55, 57R30}
\keywords{double disk bundle, decomposition, rationally elliptic, manifold}

% ABSTRACT
\begin{abstract}
Under mild topological restrictions, this article establishes that a smooth, closed, simply connected manifold of dimension at most seven which can be decomposed as the union of two disk bundles must be rationally elliptic.  In dimension five, such manifolds are classified up to diffeomorphism, while the same is true in dimension six when either the second Betti number vanishes or the third Betti number is non-trivial.
\end{abstract}

% DATE
\date{\today}

\maketitle

%--------------------------------------------------------------------------------------
%		SECTION: INTRODUCTION
%--------------------------------------------------------------------------------------

\section{Introduction}
A closed manifold is said to admit a \emph{double disk-bundle decomposition} if it can be written as the union of two disk bundles glued together along their common boundary by a diffeo\-morphism.  For example, a sphere $\sph^n$, $n \geq 2$, is well known to admit at least two such decompositions: $D^n \cup D^n$ and $(\sph^p \x D^{q+1}) \cup (D^{p+1} \x \sph^q)$, where $n = p + q + 1$.

Frequently, in the differential geometry literature, such decompositions either arise naturally from geometric hypotheses (see, for example, \cite{Heb}, \cite{Mue} and \cite{ScSe}) or are used to create novel, often non-homogeneous, examples of certain interesting phenomena.  Although it would be impossible to give an exhaustive listing, it is perhaps instructive to highlight just some of the many situations where double disk-bundle decompositions appear.  

In the study of isoparametric and Dupin hypersurfaces, double disk-bundle decompositions play a central role; see, for example, \cite{Mue}, \cite{GH} and \cite{St}.  In \cite{TZ}, it was shown that all fake quaternionic projective planes (see \cite{EeKu}) admit a Riemannian metric such that there is a point through which all geodesics are simply closed and of the same length.  As particular examples of singular Riemannian foliations \cite{QT}, double disk-bundle decompositions are also well understood from the point of view of mean curvature flow \cite{AR}.

There is a vast literature dealing with the special case of cohomogeneity-one manifolds, where the decomposition arises as a result of the existence of a Lie group action with one-dimensional orbit space.  For example, the additional symmetry afforded by such an action has been exploited to study minimal hypersurfaces in spheres \cite{HL}, construct infinite families of inhomogeneous Einstein manifolds \cite{Boe} and construct new examples of inhomogeneous nearly-K\"ahler structures on $6$-manifolds \cite{FoHa}.

In the study of positive and non-negative sectional curvature, the presence of a decomposition as the union of two disk bundles has proven useful both in producing exciting new examples (see, for example, \cite{De}, \cite{GKS1}, \cite{GKS2}, \cite{GVZ} and \cite{GZ}) and in proving classification results under additional symmetry assumptions (see, for example, \cite{GS1}, \cite{GS2} and \cite{GVWZ}).  In particular, this extra structure has led to the proof of some special cases (see \cite{GKRW} and \cite{Sp}) of the Bott Conjecture, which asserts that a closed, simply connected Riemannian manifold admitting a metric with non-negative sectional curvature must be rationally elliptic.  Recall that a closed manifold $M$ is said to be rationally elliptic if $\dim_\Q(\pi_*(M) \ox \Q) < \infty$, and rationally hyperbolic otherwise.

Given the Bott Conjecture, the prevalence of double disk-bundle decompositions among known examples of manifolds admitting non-negative or positive sectional curvature, and given that the Double-Soul Conjecture \cite{Gr} asks whether every non-negatively curved, closed, simply connected Riemannian manifold admits a double disk-bundle decomposition, the present work is motivated by a desire to understand whether there is any connection between rational ellipticity and these decompositions, even independent of curvature assumptions.

After a moment's thought, it is clear that some topological restrictions are necessary in any such investigation.  Indeed, for all $n \geq 2$ and all $m \geq 3$, the $(n + 4)$-dimensional manifold $\sph^n \x \#_{k = 1}^m \CP^2$ is rationally hyperbolic and admits a double disk-bundle decomposition.  Nevertheless, it turns out that, in low dimensions, the required topological restrictions are very mild.

\begin{main}
\label{T:thmA}
Let $M^n$ be a smooth, closed, simply connected manifold of dimension $n \leq 7$ which admits a double disk-bundle decomposition.  Then $M^n$ is rationally elliptic if and only if either $n \leq 5$ or else $n = 6$ and $b_2(M^6) \leq 3$ (respectively, $n = 7$ and $b_2(M^7) \leq 2$).
\end{main}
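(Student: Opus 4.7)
The two implications require different strategies, and the forward direction is by far the easier. If $M$ is rationally elliptic, the stated Betti number bounds follow from the classification of rational homotopy types of simply connected rationally elliptic manifolds in low dimensions, independent of the double disk-bundle hypothesis. In particular, Friedlander--Halperin-type results yield $b_2(M^6) \leq 3$ for a simply connected rationally elliptic $6$-manifold and $b_2(M^7) \leq 2$ for a simply connected rationally elliptic $7$-manifold, while for $n \leq 5$ there is nothing to prove.

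For the reverse implication, write $M = D(\xi_-) \cup_\phi D(\xi_+)$, where $D(\xi_\pm) \to B_\pm$ is a disk bundle over a base $B_\pm^{b_\pm}$ with fibre $D^{c_\pm}$, so that $b_\pm + c_\pm = n$ and $c_\pm \geq 1$. The common boundary $L = \partial D(\xi_\pm)$ inherits two simultaneous sphere-bundle structures $\sph^{c_\mp - 1} \to L \to B_\pm$. The plan is to run a case analysis on the unordered pair $(b_-, b_+)$, of which, given $n \leq 7$ and $c_\pm \geq 1$, there are only finitely many configurations.

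In each case I would deploy three tools in concert. First, the Mayer--Vietoris sequence for the cover of $M$ by the two disk bundles (which deformation retract onto $B_\pm$), relating $H^*(M;\Q)$ to $H^*(B_\pm;\Q)$ and $H^*(L;\Q)$ and yielding the Euler-characteristic identity $\chi(M) = \chi(B_-) + \chi(B_+) - \chi(L)$. Second, the Leray--Serre spectral sequences of the two sphere bundles $L \to B_\pm$, which constrain $H^*(L;\Q)$ and force compatibility relations between $H^*(B_-;\Q)$ and $H^*(B_+;\Q)$. Third, the simple-connectivity of $M$, together with the long exact sequences of these fibrations, strongly restricts $\pi_1(B_\pm)$; combined with the Barden--Smale classification in dimension $5$ and the classifications of simply connected $4$- and $6$-manifolds with controlled Betti numbers, this essentially enumerates the admissible $B_\pm$. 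In each surviving configuration, one should then be able to match $M$ rationally against a known elliptic model.

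The main obstacle will be the cases in which both bases are of middle dimension, most prominently $b_- = b_+ = n-1$ (the cohomogeneity-one type), since then $B_\pm$ can a priori be rather complicated and potentially rationally hyperbolic. One must argue that the hypothesis $b_2(M) \leq 3$ in dimension $6$ (respectively $b_2(M) \leq 2$ in dimension $7$), fed back through the Mayer--Vietoris sequence and the double-fibration constraint on $L$, excludes every rationally hyperbolic candidate for $B_\pm$. Once the bases themselves are forced to be rationally elliptic and sufficiently restricted, identifying $M$ with a known elliptic model in each remaining case should complete the proof.
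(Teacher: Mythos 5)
Your forward direction and the coarse structure of the case analysis on $\dim(B_\pm)$ are fine, and the cases where some $B_\pm$ has dimension $\leq 3$ are indeed easy (the paper handles them via a general Proposition: if $\dim(B) \leq 3$ then $B$ is finitely covered by one of a short list of $1$-, $2$- or $3$-manifolds, all of which are rationally elliptic, and ellipticity then propagates to $M$). But the hard cases you correctly flag — both $B_\pm$ of codimension two (and, in dimension seven, codimension three) — cannot be closed with the toolbox you propose, and the gap is fundamental, not merely a matter of detail.

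Your three tools (Mayer--Vietoris, the Leray--Serre spectral sequences of the two sphere bundles $\sph^{\ell_\pm}\to L\to B_\pm$, and fundamental-group restrictions from $\pi_1(M)=0$) only relate \emph{cohomology} groups of $M$, $L$ and $B_\pm$ to one another; they give no a priori upper bound on $b_2(B_\pm)$ or $\rank(\pi_2(B_\pm))$ in terms of $b_2(M)$, which is exactly what is needed to force $B_\pm$ (and hence $M$) to be rationally elliptic. For instance, Mayer--Vietoris only yields a relation of the shape $b_2(M)-b_3(M) = b_2(B_-)+b_2(B_+)-b_2(L)+\dots$, in which $b_2(B_\pm)$ could a priori be huge. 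The two ingredients your plan is missing, and which the paper leans on throughout, are: first, the Grove--Halperin structure theorem (Theorem~\ref{T:HOM_FIBER} and Table~\ref{table:Qtype}) for the \emph{homotopy fiber $F$ of the inclusion $L\to M$} — a fibration your proposal never introduces — which pins down the rational homotopy type and fundamental group of $F$ to a short list determined by the codimensions $\ell_\pm$ and the orientability of the two sphere bundles; and second, the new Theorem~\ref{T:NoRank}, asserting non-triviality of the connecting homomorphism $\pi_{s+1}^\Q(M)\to\pi_s^\Q(F)$ in the long exact sequence of that fibration. Feeding Theorem~\ref{T:NoRank} into the long exact sequences for $F\to L\to M$ and $\sph^{\ell_\pm}\to L\to B_\pm$ is precisely what produces the identity $\rank(\pi_1(F))+\rank(\pi_2(B))=b_2(M^6)+1$ (Lemma~\ref{L:fundgpF}) and its seven-dimensional analogues, which convert the hypothesis $b_2(M)\leq 3$ (resp.\ $\leq 2$) into a small bound on $b_2$ of the universal cover of $B_\pm$, and thence into rational ellipticity via the classifications of simply connected $4$- and $5$-manifolds. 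Without $F$, Table~\ref{table:Qtype} and Theorem~\ref{T:NoRank}, your plan also cannot control $\pi_1(B_\pm)$ in the genuinely problematic subcase where both $\pi_1(B_\pm)$ are infinite (where the paper instead uses triviality of the circle bundles, Poincar\'e duality of the maximal free abelian cover of $L$, and the $6$-manifold classification to pin $M$ down as $\sph^3\x\sph^3$), nor can it run the key seven-dimensional spectral-sequence computations, which are for the fibration $F\to L\to M^7$ rather than for the sphere bundles over $B_\pm$.
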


Note that, if $M^6$ (respectively, $M^7$) is rationally elliptic, then it is well known that $b_2(M^6) \leq 3$ (respectively, $b_2(M^7) \leq 2$); see, for example, \cite{He}.  Therefore, in light of the examples given above, the statement is optimal in respect of restrictions on the second Betti number in dimensions $\leq 7$.  Furthermore, notice that Theorem \ref{T:thmA} may be restated as follows: a smooth, closed, simply connected manifold of dimension $\leq 7$ which admits a double-disk bundle decomposition is rationally elliptic if and only if it has the same Betti numbers as a rationally elliptic manifold.  This statement is optimal in respect of dimension since there are counterexamples already in dimension eight.  For example, for all $n \geq 2$, the $(n+6)$-dimensional manifold $\sph^n \x ((\sph^2 \x \sph^4) \# (\sph^2 \x \sph^4))$ is rationally hyperbolic, yet admits a double disk-bundle decomposition and has the same Betti numbers as the rationally elliptic space $\sph^n \x \sph^2 \x \CP^2$.  For completeness, however, recall that a consequence of the work of Miller \cite{Mill} is that, for $k \geq 2$, a smooth, closed, $(k-1)$-connected manifold of dimension $\leq 4k-2$ is rationally elliptic whenever its rational cohomology ring is isomorphic to that of a rationally elliptic space. 

Closed, smooth, simply connected manifolds of dimension four which admit a double disk-bundle decomposition were classified up to diffeomorphism in \cite{GeRa}.  The only such manifolds are $\sph^4$, $\CP^2$, $\sph^2 \x \sph^2$ and $\CP^2 \# \pm \CP^2$, precisely those simply connected $4$-manifolds known to admit a metric of non-negative sectional curvature.  In dimension five, it turns out that an analogous statement is true.

% THM

\begin{main}
\label{T:DIFFEO_CLASSIF}
A smooth, closed, simply connected manifold of dimension five admits a double disk-bundle decomposition if and only if it is diffeomorphic to $\sph^5$, the Wu manifold $\SU(3)/\SO(3)$, $\sph^3 \x \sph^2$, or %to $\sph^3 \tilde{\x}\sph^2$, 
the unique non-trivial %(linear) 
$\sph^3$-bundle over $\sph^2$.
\end{main}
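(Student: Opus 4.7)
\emph{Proof proposal.}  The plan is to prove the two directions of the biconditional separately.  For the converse, it suffices to exhibit an explicit decomposition of each listed manifold: $\sph^5 = D^5 \cup D^5$; each of the two $\sph^3$-bundles $M \to \sph^2$ decomposes via the hemispheres of $\sph^2$ as $M = (D^2 \x \sph^3) \cup (D^2 \x \sph^3)$, a union of two trivial $D^2$-bundles over $\sph^3$ glued along $\sph^1 \x \sph^3$; and the Wu manifold $\SU(3)/\SO(3)$ carries a cohomogeneity-one action whose orbit-space structure yields a double disk-bundle decomposition.

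For the forward direction, write $M^5 = D(E_1) \cup_\partial D(E_2)$, with $E_i \to B_i$ a rank-$k_i$ vector bundle over a closed base $B_i$ of dimension $5 - k_i$.  By Theorem~\ref{T:thmA}, $M^5$ is rationally elliptic, so its Betti numbers match those of $\sph^5$ (when $b_2 = 0$) or of $\sph^2 \x \sph^3$ (when $b_2 = b_3 = 1$).  Since $M^5$ is simply connected, Poincar\'e duality and universal coefficients give $H_3(M^5;\Z)$ torsion-free of rank $b_2$ and $H_2(M^5;\Z) \cong \Z^{b_2} \oplus T$ for some finite abelian group $T$.  The Smale--Barden theorem then determines $M^5$ up to diffeomorphism from the pair $(H_2(M^5;\Z), w_2(M^5))$, so the task reduces to pinning down $T$ and $w_2$.

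I would next run through the admissible rank pairs $(k_1, k_2)$ with $1 \leq k_1 \leq k_2 \leq 5$.  In each case, van Kampen applied to $M^5 = D(E_1) \cup D(E_2)$ together with the low-dimensional classification of closed manifolds (surfaces, simply connected $3$-manifolds via Poincar\'e/geometrisation, and simply connected $4$-manifolds) severely restricts the possible bases $B_i$.  The Mayer--Vietoris sequence of the decomposition, combined with the Gysin sequences of the two sphere bundles $\partial D(E_i) \to B_i$, then computes $H_*(M^5;\Z)$ and---through the restriction of $w_2(M^5)$ along the homotopy-retract inclusion $B_i \hra M^5$---fixes the Stiefel--Whitney class.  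Matching the resulting invariants against the Smale--Barden list identifies $M^5$ with one of $\sph^5$, the Wu manifold, or the two $\sph^3$-bundles over $\sph^2$.

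The principal obstacle is the need to rule out rationally elliptic $5$-manifolds that share Betti numbers with the listed manifolds yet carry extra torsion in $H_2$---for instance, the connected sum $(\sph^2 \x \sph^3) \# (\SU(3)/\SO(3))$, which is rationally equivalent to $\sph^2 \x \sph^3$ (the Wu manifold being a rational $\sph^5$) but not diffeomorphic to any manifold in the statement.  Eliminating every such ``exotic'' candidate amounts to showing, in each rank case, that the integral Mayer--Vietoris sequence cannot produce torsion in $H_2(M^5;\Z)$ beyond the $\Z/2$ realised by the Wu manifold in the $b_2 = 0$ case; this integral rigidity, rather than the rational ellipticity supplied by Theorem~\ref{T:thmA}, is the technical heart of the argument.
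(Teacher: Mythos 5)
Your proposal correctly identifies the overall architecture: rational ellipticity from Theorem~\ref{T:thmA}, Pavlov's classification of rationally elliptic $5$-manifolds up to rational homotopy, and the Barden--Smale classification reducing the problem to pinning down $H_2(M^5;\Z)$ and $w_2(M^5)$.  You also correctly flag the technical heart --- excluding extra torsion in $H_2$ --- and give the right illustrative non-example $(\sph^2 \x \sph^3)\#(\SU(3)/\SO(3))$.  The converse direction is handled adequately (though in the paper's setting, all four manifolds admit cohomogeneity-one actions by Hoelscher's classification, which packages your case-by-case constructions uniformly).

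However, there is a genuine gap: your plan to control the torsion via ``van Kampen + low-dimensional classification of the $B_i$, then Mayer--Vietoris and Gysin'' is not carried out, and as stated it is not clear that it \emph{can} be carried out without additional structural input.  The paper's proof of Theorems~\ref{T:S5} and~\ref{T:S3S2} relies essentially on two tools that your proposal never invokes: the Grove--Halperin classification (Theorem~\ref{T:HOM_FIBER} and Table~\ref{table:Qtype}) of the homotopy fiber $F$ of the inclusion $L\hra M$, and Theorem~\ref{T:NoRank}, which forces the connecting homomorphism $\partial\colon\pi_{s+1}^\Q(M)\to\pi_s^\Q(F)$ to be non-zero and thereby identifies the loop-space factor $\Omega\sph^k$ of $F$.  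Knowing $k$ and $\pi_1(F)$ from Table~\ref{table:Qtype} is precisely what tells you that the singular leaves are finitely covered by spheres or products of spheres, that $\pi_2(L)$ is free abelian, and (in the $\pi_1(F)=\Z\oplus\Z_2$ case) that the torsion in $\pi_2(M^5)$ is at most $\Z_2$.  Without that, a direct Mayer--Vietoris computation over all rank pairs $(k_1,k_2)$ leaves you unable to rule out torsion: for example, nothing in your proposed toolkit excludes a codimension-two singular leaf $B_i$ with non-free $\pi_2$, or a circle bundle $L\to B_i$ whose Gysin sequence introduces unwanted torsion classes.  You would need to rediscover (or reprove) the homotopy-fiber constraints to close the argument, which is the substantive content the proposal is missing.

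A secondary omission: your proposal does not address why only codimension $\geq 2$ singular leaves need be considered (Proposition~\ref{P:exceptional}), nor does it invoke Lemma~\ref{L:COH_ISO} or an equivalent observation needed to handle the $\pi_1(F)=0$ subcase where $L$ is an $\sph^2$-bundle over $\sph^2$ and one must distinguish $\sph^2\x\sph^2$ from $\CP^2\#\ol{\CP}^2$ in the Mayer--Vietoris image.  These are not optional refinements; they are where the $\Z_2$ (Wu manifold) case actually appears, so an argument that does not engage with them cannot reach the conclusion.
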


Theorem \ref{T:DIFFEO_CLASSIF} may be viewed as further evidence that this is the complete list of simply connected $5$-manifolds admitting a metric of non-negative sectional curvature; for example, see \cite{GGS}, where decompositions as the union of two disk bundles play a key role.  Note that, by \cite{Ho}, each of the manifolds in Theorem \ref{T:DIFFEO_CLASSIF} admits a cohomogeneity-one action and, hence, a double disk-bundle decomposition.  

Recall that a closed, simply connected, rationally elliptic $5$-manifold must be rationally homotopy equivalent to either $\sph^5$ or $\sph^ 3 \x \sph^2$; see, for example, \cite{Pav}.  However, each of these rational homotopy types contains infinitely many distinct homotopy types.  Indeed, since the Wu manifold $\SU(3)/\SO(3)$ is a rational homology $5$-sphere with $H_2(\SU(3)/\SO(3); \Z) = \Z_2$, taking the connected sum of a smooth, closed, simply connected $5$-manifold with $\SU(3)/\SO(3)$ will change its homotopy type, but not its rational homotopy type.

% COROLLARY

\begin{Corollary}
There are infinitely many smooth, closed, simply connected, rationally elliptic $5$-manifolds which do not admit a double disk-bundle decomposition.
\end{Corollary}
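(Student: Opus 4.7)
The plan is to exploit the observation made in the paragraph immediately preceding the statement: since the Wu manifold $W = \SU(3)/\SO(3)$ is a simply connected rational homology $5$-sphere, the connected sum with $W$ changes homotopy type but not rational homotopy type. Explicitly, for every integer $k \geq 2$ I would consider
\[
M_k \;=\; \underbrace{W \# W \# \cdots \# W}_{k \text{ copies}},
\]
a smooth, closed, simply connected $5$-manifold, and argue that the family $\{M_k\}_{k \geq 2}$ provides infinitely many rationally elliptic examples admitting no double disk-bundle decomposition.

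The first step is to check rational ellipticity of $M_k$. For simply connected closed $n$-manifolds one has $H_i(M \# N;\Q) \cong H_i(M;\Q) \oplus H_i(N;\Q)$ in degrees $0 < i < n$, so $M_k$ has the rational cohomology of $\sph^5$. Since any simply connected rational homology sphere is formal with the minimal Sullivan model of the corresponding sphere, $M_k$ is rationally homotopy equivalent to $\sph^5$ and, in particular, rationally elliptic.

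The second step is to rule out a double disk-bundle decomposition on $M_k$ for $k \geq 2$. The standard Mayer-Vietoris computation for connected sums gives $H_2(M_k; \Z) \cong (\Z_2)^k$, a finite group of order $2^k \geq 4$. The second homology groups of the four manifolds listed in Theorem \ref{T:DIFFEO_CLASSIF} are respectively $0$, $\Z_2$, $\Z$ and $\Z$, none of which is isomorphic to $(\Z_2)^k$ for $k \geq 2$. Theorem \ref{T:DIFFEO_CLASSIF} then implies that $M_k$ admits no double disk-bundle decomposition, and the order of $H_2(M_k;\Z)$ also shows that distinct values of $k$ yield pairwise non-homeomorphic manifolds.

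No substantial obstacle is anticipated: the argument is essentially a direct appeal to Theorem \ref{T:DIFFEO_CLASSIF}, together with the standard fact that connected summing with a rational homology sphere preserves the rational homotopy type of a simply connected manifold. The only care needed is to record that $W$ really is a simply connected rational $5$-sphere, which is a classical computation, and that its minimal Sullivan model coincides with that of $\sph^5$.
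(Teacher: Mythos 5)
Your proposal is correct and follows exactly the construction the paper intends: the paragraph immediately preceding the Corollary suggests taking connected sums with the Wu manifold $W = \SU(3)/\SO(3)$, and your family $M_k = \#^k W$ for $k \geq 2$ is a direct instantiation of that idea. The verification that $M_k \simeq_\Q \sph^5$ (hence rationally elliptic), that $H_2(M_k) \cong (\Z_2)^k$ excludes $M_k$ from the list in Theorem~\ref{T:DIFFEO_CLASSIF} when $k \geq 2$, and that distinct $k$ give non-homeomorphic manifolds is all sound.
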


It is an intriguing coincidence that the diffeomorphism types of manifolds of dimension $\leq 5$ which admit a double disk-bundle decomposition are precisely those for which there exists a Riemannian metric with trivial topological entropy; see \cite{PP}.  It would be interesting to know whether this is also true in higher dimensions.

In dimensions six and seven, it is well known that there are infinitely many rational homotopy types of closed, smooth, simply connected manifolds; see \cite{He}, \cite{To}.  Many of these rational homotopy types can be represented by a nice model space which, by Proposition \ref{P:SuffCond}, can easily be seen to admit a double disk-bundle decomposition.  In particular, an infinite family of $2$-connected, rational homology $7$-spheres, each admitting infinitely many double disk-bundle decompositions, was constructed in \cite{GKS1}, including many spaces which are not even homotopy equivalent to an $\sph^3$-bundle over $\sph^4$ \cite{GKS2}.  On the other hand, there are infinitely many rational homotopy types for which no nice representative is known, nor whether any representative can be decomposed as the union of two disk bundles.  Therefore, a classification in dimensions six and seven up to diffeomorphism, similar to that in Theorem \ref{T:DIFFEO_CLASSIF}, seems beyond the scope of the present article.  Nevertheless, it is possible to obtain some partial results.

\begin{main}
\label{T:6dim_b2}
A smooth, closed, simply connected $6$-manifold $M^6$ with $b_2(M^6) = 0$ admits a double disk-bundle decomposition if and only if it is diffeomorphic to either $\sph^6$ or $\sph^3 \x \sph^3$.
\end{main}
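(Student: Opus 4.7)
The ``if'' direction is immediate: $\sph^6 = D^6 \cup_{\sph^5} D^6$ and $\sph^3 \x \sph^3 = (\sph^3 \x D^3) \cup_{\sph^3 \x \sph^2} (D^3 \x \sph^3)$ exhibit the required decompositions.

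For the converse, my plan is to first restrict the rational homotopy type of $M^6$ via Theorem \ref{T:thmA}, and then upgrade the conclusion to a diffeomorphism classification. Since $b_2(M^6) = 0 \leq 3$, Theorem \ref{T:thmA} gives that $M^6$ is rationally elliptic. Combining simple connectivity ($b_1 = b_5 = 0$), Poincar\'e duality ($b_4 = b_2 = 0$), the fact that the cup-product pairing on $H^3(M^6;\Q)$ is non-degenerate skew-symmetric (so $b_3$ is even), and the ellipticity bound $\chi(M^6) = 2 - b_3 \geq 0$, one extracts $b_3 \in \{0, 2\}$. Hence $M^6$ has the rational homotopy type of either $\sph^6$ or $\sph^3 \x \sph^3$.

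To pass from rational to smooth classification, I would then invoke the classical result that a smooth, closed, simply connected $6$-manifold with vanishing $H_2(\,\cdot\,;\Z)$ is diffeomorphic to a connected sum $\#_k(\sph^3 \x \sph^3)$ with $k = b_3/2$ (understood to be $\sph^6$ when $k = 0$); this follows via the h-cobordism theorem in dimension six. Given $b_3 \in \{0, 2\}$, the desired conclusion $M^6 \cong \sph^6$ or $\sph^3 \x \sph^3$ would follow immediately.

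The principal obstacle is therefore to strengthen the hypothesis $b_2(M^6) = 0$ to the integral statement $H_2(M^6;\Z) = 0$, i.e.\ to rule out torsion. My plan here is to exploit the decomposition $M^6 = D_1 \cup_N D_2$ directly, with $D_i \to B_i$ a rank-$(6 - n_i)$ disk bundle and $N$ the common boundary. Running a case analysis on $(n_1, n_2)$ with $0 \leq n_1 \leq n_2 \leq 5$, I would first use simple connectivity of $M^6$ together with the Seifert--van Kampen theorem (noting $D_i \simeq B_i$) to restrict $\pi_1(B_i)$, and combine this with the homotopy exact sequence of the sphere bundle $N \to B_i$ to restrict $\pi_1(N)$. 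The Mayer--Vietoris sequence of the decomposition in integral homology would then control the torsion of $H_2(M^6;\Z)$ in terms of the integral homology of the $B_i$ and $N$, tied together by the Gysin sequences of the sphere bundles $N \to B_i$. The hypothesis $b_2(M^6) = 0$ should propagate sharply through these sequences and, combined with known classifications of simply connected manifolds in dimensions $\leq 5$ (including Theorem \ref{T:DIFFEO_CLASSIF} when $N$ is five-dimensional), severely restrict $B_1$, $B_2$ and $N$. The most delicate cases are the intermediate-dimensional ones with $n_1, n_2 \in \{2, 3, 4\}$, where low-dimensional rigidity is weakest and one expects to need the full strength of the combined Mayer--Vietoris/Gysin analysis.
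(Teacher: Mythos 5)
Your overall skeleton is sound and matches the paper's logic at a high level: use Theorem~\ref{T:thmA} to get rational ellipticity, deduce $b_3\in\{0,2\}$ (the computation via $\chi\geq 0$ and evenness of $b_3$ is correct), and then reduce the diffeomorphism classification to showing $H_2(M^6;\Z)=0$, after which Wall's classification of $2$-connected $6$-manifolds finishes the job. You have also correctly isolated the real content of the theorem: ruling out torsion in $H_2(M^6;\Z)$.

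However, your plan for that last step has a genuine gap. You propose a case analysis on the codimensions $(n_1,n_2)$ driven only by Seifert--van Kampen and a combined Mayer--Vietoris/Gysin computation. Without further input this case analysis is not tractable, for two reasons. First, the possible codimensions and the fundamental groups $\pi_1(L)$, $\pi_1(B_\pm)$ are not usefully pinned down by Seifert--van Kampen alone: in the crucial case where both $B_\pm$ have codimension two, $L$ is a $5$-manifold fibered by circles over both $4$-manifolds $B_\pm$, and $\pi_1(L)$, $\pi_1(B_\pm)$ can a priori be large and non-abelian. The paper controls this via Grove--Halperin's Theorem~\ref{T:HOM_FIBER} and Table~\ref{table:Qtype} (which pins $\pi_1(F)$, hence $\pi_1(L)$, to a quotient of $Q_8$, $\Z\oplus\Z_2$ or $\Z^2$), and, decisively, via Theorem~\ref{T:NoRank}: the non-triviality of the connecting homomorphism $\pi_{s+1}^\Q(M)\to\pi_s^\Q(F)$ forces the loop-space factor of $F$ to ``see'' an odd-degree rational homotopy group of $M^6$, which (once one knows $M^6\simeq_\Q\sph^6$ or $\sph^3\x\sph^3$) eliminates most rows of the table and, in particular, rules out codimension-two leaves entirely in the $\sph^6$ case. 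Your proposal never invokes either of these, and the case $\ell_-=\ell_+=1$ cannot be discharged by Mayer--Vietoris/Gysin alone. Second, even after the table has been used, the codimension-two case with $b_3=2$ requires the full strength of Theorem~\ref{T:infinitePi1} (built on Theorem~\ref{T:triv}, Proposition~\ref{P:PDmiddle} on Milnor--Barge duality for the maximal free abelian cover of $L$, and Lemma~\ref{L:nilp}); this is much more than a propagation of $b_2=0$ through exact sequences. Finally, your appeal to Theorem~\ref{T:DIFFEO_CLASSIF} ``when $N$ is five-dimensional'' is misplaced: that theorem classifies $5$-manifolds that themselves admit a double disk-bundle decomposition, and the regular leaf $L=N$ is not given such a decomposition.
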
 

Observe that the conclusion of Theorem \ref{T:6dim_b2} excludes all smooth, closed, simply connected $6$-manifolds $M^6$ with $b_2(M^6) = 0$ which have torsion in their cohomology, including all (non-trivial) rational homology spheres.

\begin{Corollary}
There are infinitely many smooth, closed, simply connected, rationally elliptic $6$-manifolds which do not admit a double disk-bundle decomposition.
\end{Corollary}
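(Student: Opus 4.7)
The plan is to exploit Theorem~\ref{T:6dim_b2} directly. By the observation immediately following its statement, any smooth, closed, simply connected $6$-manifold $M^6$ with $b_2(M^6) = 0$ whose integral cohomology contains non-trivial torsion is automatically excluded from admitting a double disk-bundle decomposition. Since any simply connected rational homology $6$-sphere is rationally elliptic (its minimal Sullivan model agrees with that of $\sph^6$) and satisfies $b_2 = 0$, it suffices to exhibit infinitely many pairwise non-homotopy-equivalent smooth, closed, simply connected rational homology $6$-spheres having non-trivial torsion in cohomology.

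To this end, I would first combine Poincar\'e duality with the universal coefficient theorem to note that, for a smooth, closed, simply connected $6$-manifold $M^6$, the torsion subgroups of $H_2(M^6;\Z)$ and $H_3(M^6;\Z)$ are isomorphic, $H_4(M^6;\Z)$ is torsion-free of rank $b_2(M^6)$, and $H_5(M^6;\Z) = 0$. Consequently, a rational homology $6$-sphere in this class has integral homology profile $\Z,\, 0,\, T,\, T,\, 0,\, 0,\, \Z$ for some finite abelian group $T$.

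For each integer $n \geq 2$ I would then produce a representative $M_n$ realizing $T = \Z/n\Z$. Explicitly, one may build a smooth $6$-dimensional handlebody $W_n$ consisting of a single $0$-handle, a single $2$-handle attached along an unknot, and a single $3$-handle whose attaching $2$-sphere represents $n$ times a generator of $\pi_2$ of the preceding boundary, so that $W_n$ is homotopy equivalent to the Moore space $M(\Z/n\Z, 2)$. One then closes $W_n$ by attaching a matched collection of dual handles (a second $3$-handle, a $4$-handle, and a $6$-handle) along its simply connected $5$-dimensional boundary, producing a closed smooth $6$-manifold $M_n$ with the prescribed homology. Alternatively, one may simply invoke the realization statement in the Jupp--Wall classification of smooth, closed, simply connected $6$-manifolds, which allows every finite abelian group to appear as the torsion subgroup of $H_2$. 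Distinct values of $n$ yield non-isomorphic $H_2$ groups, and hence pairwise non-homotopy-equivalent $M_n$.

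The main technical point is the closure step: one must verify that the dual handles can be attached so as to preserve simple connectivity while killing the free parts of $H_2$, $H_3$ and $H_4$. This is controlled via the Smale--Barden classification of simply connected $5$-manifolds (to identify $\partial W_n$) together with a compatible dual handle decomposition, or, more efficiently, by directly quoting the Jupp--Wall realization result. With the family $\{M_n\}_{n \geq 2}$ established, each $M_n$ is a smooth, closed, simply connected, rationally elliptic $6$-manifold with non-trivial torsion in its integral cohomology, and Theorem~\ref{T:6dim_b2} immediately forbids any double disk-bundle decomposition on $M_n$, completing the proof.
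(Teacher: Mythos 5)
Your proof is correct and follows essentially the same route as the paper. The paper's argument is implicit: the remark immediately after Theorem~\ref{T:6dim_b2} notes that its conclusion excludes every closed, simply connected $6$-manifold with $b_2 = 0$ and torsion in cohomology, in particular all non-trivial rational homology $6$-spheres, and the corollary then follows from the (standard) existence of infinitely many of these. You spell this out, and you supply the existence statement with an explicit handlebody construction. For the record, that construction does work: with $W_n$ a thickening of $M(\Z/n,2)$ built from a $0$-, a $2$- and a $3$-handle as you describe, the boundary $\partial W_n$ is a simply connected rational homology $5$-sphere of order $n^2$, and the double $DW_n = W_n\cup_{\partial W_n}W_n$ (which is exactly what your ``matched collection of dual handles'' produces) has homology $\Z,0,\Z/n,\Z/n,0,0,\Z$ by Mayer--Vietoris together with the long exact sequence of $(W_n,\partial W_n)$ and Lefschetz duality; the details you flag as ``the closure step'' do go through, though your writeup leaves the Mayer--Vietoris computation to the reader. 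One small quibble on the alternative route: Wall's classification \cite{Wa} treats the torsion-free case and Jupp \cite{Ju} the torsion-free non-spin case; the realization of arbitrary torsion in $H_2$ is due to Zhubr \cite{Zh}, which the paper also cites. Citing ``Jupp--Wall'' alone for that realization is slightly imprecise, but the substance of the argument is fine and matches the paper's intent.
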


In fact, even without any assumption on $b_2(M^6)$ in Theorem \ref{T:6dim_b2} above, $M^6$ must be diffeomorphic to $\sph^3 \x \sph^3$ if $b_3(M^6) \neq 0$.  Therefore, in combination with Theorem \ref{T:thmA}, it follows that a rationally hyperbolic $6$-manifold which admits a double disk-bundle decomposition has its rational cohomology concentrated in even degrees and Euler characteristic $\chi \geq 10$.

\begin{Corollary}
If $M^6$ is a smooth, closed, simply connected, rationally hyperbolic $6$-manifold which admits a double disk-bundle decomposition, then $b_2(M^6) \geq 4$ and $b_3(M^6) = 0$.
\end{Corollary}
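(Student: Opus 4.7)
The plan is to deduce both inequalities directly from the results listed earlier in the introduction, treating this as an essentially formal corollary of Theorem \ref{T:thmA} together with the strengthened version of Theorem \ref{T:6dim_b2} mentioned in the paragraph immediately preceding the corollary.

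First, for the bound $b_2(M^6) \geq 4$, I would argue by contraposition using Theorem \ref{T:thmA}. Let $M^6$ be a smooth, closed, simply connected $6$-manifold admitting a double disk-bundle decomposition. Theorem \ref{T:thmA} asserts that for $n = 6$, such an $M^6$ is rationally elliptic if and only if $b_2(M^6) \leq 3$. Hence, if $M^6$ is rationally hyperbolic, then necessarily $b_2(M^6) \geq 4$.

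Second, for the vanishing $b_3(M^6) = 0$, I would invoke the strengthening of Theorem \ref{T:6dim_b2} noted in the text: if $M^6$ is a smooth, closed, simply connected $6$-manifold admitting a double disk-bundle decomposition and $b_3(M^6) \neq 0$, then $M^6$ is diffeomorphic to $\sph^3 \x \sph^3$. Since $\sph^3 \x \sph^3$ is rationally elliptic (it is a compact Lie group, so its minimal model is a free exterior algebra on finitely many odd generators), this contradicts the rational hyperbolicity of $M^6$. Therefore $b_3(M^6) = 0$, completing the proof.

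The only non-trivial input is the strengthened form of Theorem \ref{T:6dim_b2}, which is asserted in the text preceding the corollary and will presumably be established in the body of the paper; granting it, the corollary is immediate. There is no real obstacle here — this corollary is a clean packaging of the two main classification/obstruction statements, so I would present it as a short two-paragraph argument with appropriate cross-references to \ref{T:thmA} and \ref{T:6dim_b2}.
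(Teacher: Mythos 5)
Your proposal is correct and matches the paper's intent: the corollary is stated in the introduction without a separate proof precisely because it follows immediately from the two ingredients you cite, namely the dimension-six case of Theorem~\ref{T:thmA} (i.e., Theorem~\ref{T:dim6}) giving $b_2(M^6)\geq 4$ by contraposition, and Theorem~\ref{T:boom} (the strengthening of Theorem~\ref{T:6dim_b2}) giving $M^6\cong \sph^3\times\sph^3$ when $b_3(M^6)\neq 0$, which is rationally elliptic and hence forces $b_3(M^6)=0$.
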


The results in this article may be viewed as evidence that admitting a double disk-bundle decomposition imposes strong restrictions on the topology of a manifold.  Consequently, it might be hoped that, in general, the rational homotopy type of such a manifold is %completely 
determined by its rational cohomology ring, a property known as formality.  It follows from work of Miller \cite{Mill} that all closed, simply connected manifolds of dimension $\leq 6$ are (intrinsically) formal, while, by recent work of Crowley and Nordstr\"om \cite{CN}, a closed, simply connected $7$-manifold is (intrinsically) formal if its cohomology ring satisfies a certain hard Lefschetz property.

\begin{main}
\label{T:nonformal}
There are infinitely many non-formal, smooth, closed, simply connected, rationally elliptic $7$-manifolds which admit a double disk-bundle decomposition.
\end{main}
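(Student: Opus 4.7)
The plan is to construct, for every integer $n \geq 1$, a manifold $E_n$ which simultaneously has all four desired properties, and then to distinguish these manifolds by an integer invariant in order to produce infinitely many.

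For each $n \geq 1$, let $\xi_n \to \sph^2 \times \sph^2$ be an oriented rank-$4$ real vector bundle with Euler class $e(\xi_n) = n \cdot (x \cup y) \in H^4(\sph^2 \times \sph^2; \Z)$, where $x, y$ are the standard degree-$2$ generators. Such $\xi_n$ exists for every $n$, e.g.~as the underlying real bundle of an $\SU(2)$-bundle with $c_2 = n\,(x \cup y)$. Set $E_n$ to be the unit sphere bundle of $\xi_n$. Then $E_n$ is a smooth, closed, $7$-dimensional $\sph^3$-bundle over $\sph^2 \times \sph^2$, simply connected (base simply connected, fibre $2$-connected) and rationally elliptic (long exact sequence of homotopy groups). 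The fibrewise splitting $\sph^3 = \disk^3_+ \cup \disk^3_-$ presents $E_n$ as the union of two $\disk^3$-bundles over $\sph^2 \times \sph^2$, glued along their common boundary $\sph^2$-bundle, so $E_n$ admits a double disk-bundle decomposition.

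To show non-formality, I would use a Sullivan model of $E_n$. Extending the (minimal, formal) model of $\sph^2 \times \sph^2$ by the transgression generator of the $\sph^3$-fibration yields the minimal model
\[
\bigl(\Lambda(x, y, a, b, s),\, d\bigr), \qquad |x| = |y| = 2,\ |a| = |b| = |s| = 3,
\]
with $dx = dy = 0$, $da = x^2$, $db = y^2$, and $ds = n \cdot xy$. A direct computation inside this model gives $H^3(E_n; \Q) = 0$ and $H^5(E_n; \Q) = \Q\bigl\langle [xs - n\,ya],\ [n\,xb - ys]\bigr\rangle$. Choosing $\alpha = a$ and $\beta = s/n$, which satisfy $d\alpha = x \cdot x$ and $d\beta = x \cdot y$, the triple Massey product is
\[
\langle [x], [x], [y] \rangle \;=\; \bigl[\,x\beta - (-1)^{|x|} \alpha y\,\bigr] \;=\; \tfrac{1}{n}\bigl[\,xs - n \cdot ya\,\bigr] \;\neq\; 0
\]
in $H^5(E_n; \Q)$, and its indeterminacy $[x] \cdot H^3(E_n; \Q) + H^3(E_n; \Q) \cdot [y]$ is trivial since $H^3(E_n; \Q) = 0$. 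Therefore this Massey product is a well-defined non-zero cohomology class, and $E_n$ is not formal.

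Finally, to see that infinitely many distinct manifolds arise, I would apply the integral Gysin sequence of $\sph^3 \to E_n \to \sph^2 \times \sph^2$: the map $H^0(\sph^2 \times \sph^2; \Z) \to H^4(\sph^2 \times \sph^2; \Z)$ is cup product with $e(\xi_n)$, i.e.\ multiplication by $n$, whence $H^4(E_n; \Z) \cong \Z/n$. Distinct values of $n$ therefore yield pairwise non-homotopy-equivalent, and in particular non-diffeomorphic, manifolds. The main step requiring care is the Massey-product calculation: although the underlying linear algebra in the explicit Sullivan model is elementary, one must verify both that $xs - n\cdot ya$ represents a non-trivial cohomology class in degree $5$ and that the indeterminacy vanishes.
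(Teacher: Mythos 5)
Your Sullivan model, Massey product calculation, and $H^4$ computation are all correct, and the overall idea (an infinite family of $\sph^3$-bundles over $\sph^2\times\sph^2$ all carrying the minimal model \eqref{E:N7}, distinguished by torsion in $H^4$) is exactly the right kind of example to aim for; in fact it is essentially the family used in the paper. However, there is a genuine gap in your verification that the spaces $E_n$ admit a double disk-bundle decomposition. You claim that the fibrewise splitting $\sph^3 = \disk^3_+\cup\disk^3_-$ presents $E_n$ as the union of two $\disk^3$-bundles over $\sph^2\times\sph^2$, but such a splitting is not preserved by the structure group of the bundle. More precisely, a linear $\sph^3$-bundle splits into two $\disk^3$-bundles exactly when it admits a smooth section (this is Proposition \ref{P:SuffCond}\eqref{i:fiber}: you pick $\sigma$ and split into the hemispheres $\{\langle v,\sigma(x)\rangle\geq 0\}$ and $\{\langle v,\sigma(x)\rangle\leq 0\}$). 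The obstruction to a section of $S(\xi_n)\to\sph^2\times\sph^2$ is precisely the Euler class $e(\xi_n)=n\,xy\neq 0$, so for $n\geq 1$ there is no section and the claimed decomposition does not exist. Moreover, you cannot quietly trade the hemisphere decomposition for the solid-torus decomposition of $\sph^3$: that one requires a reduction of structure group to a maximal torus $T^2\subset\SO(4)$, and your $\xi_n$ is constructed as the underlying real bundle of an $\SU(2)$-bundle, which reduces to $T^1\subset\SU(2)$ (i.e.\ splits as $L\oplus L^{-1}$) only when $2\mid n$, since $c_2=-c_1(L)^2=-2pq\,xy$.

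The fix is to replace your $\SU(2)$-bundle by a bundle that manifestly reduces to $T^2$, e.g.~$\xi_n = L_1\oplus L_2$ with $c_1(L_1)=x$ and $c_1(L_2)=n\,y$, which gives the same Euler class $n\,xy$ but whose sphere bundle decomposes as two $\disk^2\times\sph^1$-bundles along the common $T^2$-subbundle. This is, at the level of rational models, the same family $N^7_m$ that the paper constructs as biquotients $(\sph^3\times\sph^3\times\sph^3)\bq T^2$ and handles via Proposition \ref{P:SuffCond}\eqref{i:BiqFol}; the paper then quotes \cite[Example 2.91]{FOT} for non-formality of model \eqref{E:N7} rather than exhibiting the Massey product directly. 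Your more hands-on Massey-product and Gysin arguments are a perfectly valid (and arguably more transparent) substitute for those citations, but the double disk-bundle step must be repaired as above before the argument is complete.
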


The manifolds in Theorem \ref{T:nonformal} are a certain family of biquotients of the form $(\sph^3 \x \sph^3 \x \sph^3) \bq T^2$, all of the same rational homotopy type and distinguished by the order of the torsion in their cohomology rings.  In particular, the unit tangent bundle of $\sph^2 \x \sph^2$ is one such space.

Returning to the original motivations for this work, we conclude this introduction with some final observations which are likely already well known to the experts.  First, every known example of a simply connected manifold admitting a Riemannian metric of positive sectional curvature admits a double disk-bundle decomposition; see Theorem \ref{T:possec}.  While this is evidence for the validity of the Double-Soul Conjecture, the conjecture is completely open even in some of the simplest cases of manifolds admitting non-negative curvature.  Indeed, among compact Lie groups, it is currently unknown whether a semi-simple Lie group with all simple factors being either $E_7$ or $E_8$ can be decomposed as the union of two disk bundles; see Lemma \ref{L:GroupDDBD}.

Ever since the discovery of exotic spheres, there has been interest in determining to what extent their geometry resembles that of the standard sphere.  In dimension seven, it is now known that all exotic spheres admit a metric of non-negative sectional curvature \cite{GKS1}.  The key to obtaining such a metric is the result of Grove and Ziller ensuring that every cohomogeneity-one manifold with codimension-two singular orbits admits such a metric \cite{GZ}.  It is natural to ask whether something similar will work for higher-dimensional exotic spheres.  As it turns out, a (rational homology) sphere can be decomposed as the union of two $2$-disk bundles only if it has dimension $\leq 7$; see Corollary \ref{C:spheres} and also \cite{Fa}.  Therefore, new techniques and ideas will be required to construct a metric with non-negative curvature on a higher-dimensional exotic sphere.

One of the main tools used to obtain many of the results in the paper, including Corollary \ref{C:spheres}, is the generalization Theorem \ref{T:NoRank} of a result of Grove and Halperin \cite[Lemma 6.3]{GH} showing the non-triviality of the connecting homomorphism in a certain long exact sequence of rational homotopy groups naturally associated to a double disk-bundle decomposition.

\textbf{Organization:} %The paper is organised as follows.  
In Section \ref{S:Prelims}, the notation to be used throughout the paper is introduced and a summary is provided of the parts of rational homotopy theory relevant to this work.  In Section \ref{S:Examples}, some sufficient conditions are collected which ensure the existence of a double disk-bundle decomposition and then used to examine compact Lie groups and manifolds known to admit positive sectional curvature.  Section \ref{S:top} focuses upon establishing general topological results relevant to manifolds admitting a double disk-bundle decomposition, with the main result being Theorem \ref{T:NoRank}.  %including a generalization of a result of Grove and Halperin which is then applied to homotopy spheres.  
Sections \ref{S:dim5}, \ref{S:DIM_6} and \ref{S:dim7} are devoted to studying double disk-bundle decompositions in dimensions at most five, equal to six and equal to seven, respectively.

% ACKNOWLEDGEMENTS

\begin{ack} This project was initiated during the conference \emph{Representations of Riemannian Geometry}, held at St.\ Joseph's University in Philadelphia, PA, in August of 2017. The authors thank the conference organizers and the university for their hospitality and a stimulating atmosphere, as well as the National Science Foundation for supporting our attendance at the conference through the grant DMS--1720590.  JD was also individually supported by the NSF via the grant DMS--2105556.  FGG and MK received support from the DFG grants GA 2050/2-1 and KE 2248/1-1 under SPP2026 \emph{Geometry at Infinity}.  FGG received support from the DFG grant 281869850, RTG 2229 \emph{Asymptotic Invariants and Limits of Groups and Spaces}.  MK received support from SFB 878: \emph{Groups, Geometry \& Actions} and the Cluster of Excellence at the Mathematical Institute of the University of M\"unster.  Finally, the authors thank Marco Radeschi for helpful conversations on singular Riemannian foliations.
\end{ack}

%--------------------------------------------------------------------------------------
%		SECTION: PRELIMINARIES
%--------------------------------------------------------------------------------------

\section{Preliminaries}
\label{S:Prelims}

% SS: TERMINOLOGY AND NOTATION
%-----------------------------------------

\subsection{Terminology and notation}{\ }

Suppose that $D^{\ell_\pm + 1} \to DB_\pm \to B_\pm$ are smooth disk bundles of rank $\ell_\pm + 1$, respectively, over smooth, closed manifolds $B_\pm$, and that there is a diffeomorphism $f \colon \partial DB_- \to \partial DB_+$ of the boundaries.  Identifying these boundaries via the diffeomorphism $f$, the resulting smooth manifold $M = DB_- \cup_f DB_+$ is called a \emph{double disk bundle}.  If $L$ denotes the common image of $\partial DB_\pm$ in $M$, then it is clear that there are sphere bundles $\sph^{\ell_\pm} \to L \to B_\pm$.  

An arbitrary smooth, closed, connected manifold $M$ is said to admit a \emph{double disk-bundle decomposition} if there exists a diffeomorphism $\Phi \colon M \to DB_- \cup_f DB_+$ from $M$ to a double disk bundle $DB_- \cup_f DB_+$.  By an abuse of notation, $B_\pm$ and $L$ will be used to denote the images of pulling back to $M$ via $\Phi$ the corresponding objects in the double disk bundle $DB_- \cup_f DB_+$, while the decomposition itself will often be denoted by $DB_- \cup_L DB_+$ whenever the precise gluing map is not needed.  As a consequence of Proposition \ref{P:connected} below, $B_\pm$ will usually be assumed to be connected, without additional comment.

It is clear that knowledge of the dimensions $\ell_\pm$ of the fibers of the sphere bundles $\sph^{\ell_\pm} \to L \to B_\pm$ will play a role in understanding the topology of $M$.  Therefore, manifolds admitting a double disk-bundle decomposition will often be discussed under additional restrictions on $\ell_\pm$.  Recall, moreover, that the diffeomorphism group $\Diff(\sph^k)$ deformation retracts onto $\Or(k+1)$ whenever $k \leq  3$ \cite{BK, Ha1, Sm1}.  Hence, it will be implicitly assumed that $\sph^{\ell_\pm} \to L \to B_\pm$ is a linear bundle if $\ell_\pm \leq 3$, respectively.  The inclusion $L \to M$ gives rise to an additional homotopy fibration $F \to L \to M$, where $F$ denotes the so-called \emph{homotopy fiber}.

In \cite{QT}, Qian and Tang showed that every manifold $M$ admitting a double disk-bundle decomposition $DB_- \cup_L DB_+$ also admits a codimension-one singular Riemannian foliation with singular leaves diffeomorphic to $B_\pm$ and regular leaf diffeomorphic to $L$.  Thus, it will at times be convenient to abuse this suggestive terminology and refer to $B_\pm$ and $L$ as the \emph{singular} and \emph{regular leaves},  respectively, of the double disk-bundle decomposition of $M$.

The symbol $\cong$ will be used to indicate either that two manifolds are diffeomorphic or that two groups are isomorphic, depending on the context.  Finally, homology and cohomology will be taken with integral coefficients, unless explicitly indicated otherwise.

% SS: RATIONAL HOMOTOPY
%-----------------------------------------

\subsection{Rational homotopy theory}{\ }

Borrowing heavily from \cite{GGKR}, the basics of rational homotopy theory required in this work can be summarized as follows.  For a full treatment, see \cite{FHT, FHT2, FOT}.

% RATIONAL HOMOTOPY GROUPS

A path-connected topological space $X$ is said to be \emph{nilpotent} if its fundamental group $\pi_1(X)$ is a nilpotent group which acts nilpotently on the higher homotopy groups $\pi_k(X)$, $k \geq 2$, by the action described in \cite[p.~31]{FOT}.  Recall that a group $G$ acts nilpotently on a group $H$ if there is a finite chain 
$$
H = H_0 \rnorm H_1 \rnorm \cdots \rnorm H_m = \{e\}
$$
of subgroups such that, for each $j \in \{1, \dots, m\}$, $H_{j}$ is normal in $H_{j - 1}$ and closed under the action of $G$, the quotients $H_{j-1}/H_j$ are abelian and the induced action of $G$ on $H_{j-1}/H_j$ is trivial.  In particular, a group $G$ is nilpotent if and only if it acts on itself nilpotently by conjugation.  

{Recall that the \textit{rank of an abelian group} $A$ is the dimension of the rational vector space $A \ox \Q$.}  Building on this, the \emph{rank of a nilpotent group} $G$ is given by
$$
\rank(G) = \sum_{j=1}^{n} \rank \left( G_{j-1}/G_j \right),
$$
where $\{G_j\}_{j = 0}^n$ denotes the lower central series of $G$ and each of the groups $G_{j-1}/G_j$, $j \geq 1$, is abelian; that is, $G_0 = G$ and $G_j = [G_{j-1},G]$ for $j \geq 1$.  In particular, the quaternion group $Q_8 = \{\pm 1, \pm i, \pm j, \pm k\}$ has $\rank(Q_8) = 0$.

Let $X$ be a nilpotent topological space.  The \emph{rational homotopy groups} of $X$ are the $\Q$-vector spaces $\pi_i^\Q(X) =   \pi_i(X) \ox \Q$, $i \geq 2$, of dimension $d_i(X) = \dim_\Q (\pi_i^\Q (X))$.  The space $X$ is \emph{rationally elliptic} if 
\[
\dim_\Q H^*(X; \Q) < \infty \ \text{\ and\ } \ \dim_\Q (\pi_*^\Q (X)) = \sum_{i = 2}^\infty d_i(X) < \infty.
\]
If, instead, $\dim_\Q (\pi_*^\Q (X)) = \infty$, then $X$ is said to be \emph{rationally hyperbolic}.  

Whenever $\dim_\Q H^*(X; \Q) < \infty$, there is an integer $n_X$, called the \emph{formal dimension} of $X$, such that $H^{n_X}(X; \Q)\neq 0$ and $H^j(X; \Q) = 0$, for all $j > n_X$.  If $X$ is a closed, orientable manifold, then clearly $n_X = \dim(X)$.

If $X$ is a rationally elliptic space, %let $n_X$ be the maximal $j \in \N$ such that $H^{j}(X; \Q) \neq 0$.  T
then the dimensions $d_i(X)$ of the rational homotopy groups of $X$ satisfy, among others, the relations
\beq
\label{E:degrels}
n_X \geq \sum_{i \in \N} 2i \, d_{2i}(X) 
\ \ \text{ and } \ \ 
n_X =  - d_2(X) +  \sum_{i = 2}^{\infty} (2i - 1)(d_{2i-1}(X) - d_{2i}(X)) .
\eeq

From the
homotopy groups, one constructs a graded vector space $V_X = \bigoplus_{i = 0}^\infty V^i$ associated to $X$, where $V^0 = \Q$, $\dim_\Q V^1 = \rank(\pi_1(X))$ and, for $i \geq 2$,
\[
V^i \cong \textrm{Hom}(\pi_i(X),\Q)\cong \pi_i^\Q(X) \cong \Q^{d_i(X)}.
\]
Clearly, $V^1 = 0$ whenever $\pi_1(X)$ is a finite (nilpotent) group.  
An element $v \in V^i$ is said to be \emph{homogeneous} of \emph{degree} $\deg(v) = i$.  

The tensor algebra $TV_X$ on $V_X$ has an associative multiplication, with a unit $1 \in V^0$, given by the tensor product $T^i V_X \ox T^j V_X \to T^{i + j} V_X$, where $T^k V_X = V_X^{\ox k}$.  Taking the quotient of $TV_X$ by the ideal  generated by the elements $v \ox w - (-1)^{ij} w \ox v$, where $\deg(v) = i$, $\deg(w) = j$, yields the \emph{free commutative graded algebra} $\wedge V_X$.  In particular, multiplication in $\wedge V_X$ satisfies $v \cdot w = (-1)^{ij} w \cdot v$, for all $v \in V^i$ and $w \in V^j$.

Given a homogeneous basis $\{v_1, \dots, v_N \}$ of $V_X$, set $\wedge(v_1, \dots, v_N) = \wedge V_X$.  We denote the linear span of elements $v_{i_1} v_{i_2} \cdots v_{i_q} \in \wedge V_X$, $1 \leq i_1 \leq i_2 \leq \dots \leq i_q \leq N$, of word-length $q$ by $\wedge^q V_X$.  
Define $\wedge^+ V_X = \bigoplus_{q \geq 1} \wedge^q V_X$.

The graded algebra $\wedge V_X$ has a linear \emph{differential} $d_X$, i.e. a linear map $d_X : \wedge V_X \to \wedge V_X$ satisfying the following properties:
\begin{itemize}
\item[(1)] \label{L:deg} 
$d_X$ has degree $+1$, i.e. $d_X$ maps elements of degree $i$ to elements of degree $i+1$.\vspace{.2cm}
\item[(2)] $d_X^2 = 0$.\vspace{.2cm}
\item[(3)] $d_X$ is a derivation, i.e. $d_X (v \cdot w) = d_X (v)\cdot w + (-1)^{\deg(v)} v \cdot d_X(w)$.\vspace{.2cm}
\item[(4)] \label{L:nil}
$d_X$ is nilpotent, i.e. there is an increasing sequence of graded subspaces $V(0) \In V(1) \In \cdots $ such that $V = \cup_{k=0}^\infty V(k)$, $d_X|_{V(0)} \equiv 0$ and $d_X \colon V(k) \to \wedge V(k-1)$, for all $k \geq 1$.
\end{itemize}
In addition, $d_X$ satisfies:
\begin{itemize}
\item[(5)] $d_X$ is decomposable, i.e. $\im(d_X) \In \wedge^{\geq 2} V_X$.
\end{itemize}

Since $d_X$ is a derivation, it clearly depends only on its restriction to $V_X$.  The pair $(\wedge V_X, d_X)$ is called the \emph{minimal model} for $X$ and its corresponding (rational) cohomology satisfies $H^*(\wedge V_X, d_X) = H^*(X; \Q)$.

The minimal models of a nilpotent space $X$ and its universal cover $\wt X$ are related as follows.  If $(\wedge V_{\wt X}, d_{\wt X})$ and $(\wedge W, d)$ denote the minimal models of $\wt X$ and the classifying space $B_G$ of $G= \pi_1(X)$, respectively, then $W = W^1$, $V_{\wt X}^1 = 0$ and the minimal model of $X$ is given by
$$
(\wedge V_X, d_X) = (\wedge W \ox \wedge V_{\wt X}, d_X) = (\wedge (W \oplus V_{\wt X}), d_X),
$$
where $d_X|_{\wedge W} = d$ and $d_X(v) - d_{\wt X}(v) \in \wedge^+ W \ox \wedge V_{\wt X}$ for all $v \in V_{\wt X}$.

By a slight abuse of terminology, two nilpotent 
spaces $X$ and $Y$ will be said to be \emph{rationally homotopy equivalent} (denoted $X \simeq_\Q Y$) if their minimal models are isomorphic, i.e. if there is a linear isomorphism $f\colon\wedge V_X \to \wedge V_Y$ which respects the grading and satisfies $f \circ d_X = d_Y \circ f$ and $f(v \cdot w) = f(v)\cdot f(w)$.  It is important to note that, first, it is not assumed that $\pi_1(X) \cong \pi_1(Y)$ and, second, the isomorphism $f$ is not necessarily induced by a map between $X$ and $Y$.  In fact, $X \simeq_\Q Y$ if and only if there is a chain of maps $X \to Y_1 \leftarrow Y_2 \to \cdots \leftarrow Y_s \to Y$ such that the induced maps on rational cohomology are all isomorphisms.  Observe that $X$ and $Y$ have isomorphic rational homotopy and rational cohomology groups whenever $X  \simeq_\Q Y$.

A nilpotent space $X$ with minimal model $(\wedge V_X, d_X)$ is said to be \emph{formal} if there is a morphism
$$
(\wedge V_X, d_X) \to (H^*(X; \Q), 0)
$$
of differential graded algebras inducing an isomorphism in cohomology.  If formal spaces $X$ and $Y$ have isomorphic rational cohomology rings, then $X \simeq_\Q Y$.  On the other hand, there are examples of nilpotent spaces $Y$ with rational cohomology ring isomorphic to that of a formal space $X$ and yet $X \not\simeq_\Q Y$; see, for example, \cite[Section 7]{Lu}.  A nilpotent space $X$ is \emph{intrinsically formal} if every nilpotent space $Y$ with rational cohomology ring isomorphic to $H^*(X;\Q)$ satisfies $X \simeq_\Q Y$ and, hence, is formal; that is, there is a unique rational homotopy type (minimal model) associated to the cohomology ring $H^*(X;\Q)$.  In particular, a product of spheres is intrinsically formal \cite{FH1}.

%----------------------------------------------------------------
% S: EXAMPLES OF DOUBLE DISK BUNDLES
%----------------------------------------------------------------
\section{Examples of double disk bundles}
\label{S:Examples}

Many interesting geometric examples have arisen via double disk bundle constructions.  In the hope of achieving a deeper understanding of the topological implications of certain geometric conditions, it is then natural to investigate the prevalence of manifolds admitting a double disk-bundle decomposition.  To this end, recall that a smooth, effective action of a compact Lie group $G$ on a smooth  manifold $M$ is of \emph{cohomogeneity one} if the orbit space $M^* = M/G$ of the action is one dimensional or, equivalently, if there is a $G$-orbit of codimension one. Alternatively, if %If, on the other hand, 
the fixed-point set of the action of $G$ on $M$ is non-empty and has a component of codimension one in $M^*$, it is said to be \emph{fixed-point homogeneous}.

% PROP: SUFFICIENT CONDITIONS TO HAVE A DOUBLE DISK BUNDLE

\begin{proposition}
\label{P:SuffCond}
A smooth, closed, simply connected manifold $M$ admits a double disk-bundle decomposition if at least one of the following conditions holds:
\begin{enumerate}[(a)]
\item \label{i:conn_sum}
$M$ is a connected sum of two  compact, rank-one symmetric spaces. 

\item  \label{i:cohom1}
$M$ admits a smooth, effective action of cohomogeneity one.

\item \label{i:BiqFol}
$M$ is the quotient of a cohomogeneity-one manifold by a free subaction.

\item \label{i:base}
$M$ is the total space of a smooth fiber bundle over a manifold which admits a double disk-bundle decomposition.

\item  \label{i:fiber}
$M$ is the total space of a linear sphere bundle 
admitting a smooth section.

\item \label{i:FPH}
$M$ admits a Riemannian metric with non-negative sectional curvature which is invariant under an isometric fixed-point-homogeneous action.
\end{enumerate}
\end{proposition}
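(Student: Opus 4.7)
The plan is to verify each of the six sufficient conditions separately, producing in each case an explicit identification of the two disk bundles comprising $M$ together with their common boundary, the regular leaf $L$. Conditions (b), (c), (f) are geometric in origin, whereas (d), (e), (a) are essentially bundle-theoretic.

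For (b), since $M$ is simply connected and admits a cohomogeneity-one action by a compact Lie group $G$, a covering-space argument rules out $M/G \cong \sone$ (else the principal-orbit bundle over $\sone$ would produce a non-trivial cover), so $M/G$ is a closed interval. The preimages of the two endpoints are the singular orbits $B_\pm$, and their equivariant tubular neighborhoods are the required disk bundles meeting along the principal orbit. Case (c) follows immediately: if $M = N/H$ with $H \subset G$ acting freely on a cohomogeneity-one $G$-manifold $N$, the $G$-equivariant decomposition $N = DB_-^N \cup DB_+^N$ supplied by (b) is $H$-invariant, and since $H$ acts freely and permutes the disk fibers, the quotient projections $DB_\pm^N/H \to B_\pm^N/H$ give the required disk-bundle structure on $M$. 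For (f), one invokes the Grove--Searle soul-type theorem for non-negatively curved fixed-point-homogeneous manifolds: letting $F$ denote the codimension-one fixed-point component of the action and $F'$ the set of points of $M$ at maximal distance from $F$, both are smooth closed submanifolds whose normal disk bundles provide the decomposition $M = \nu(F) \cup \nu(F')$.

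For (d), let $\pi \colon M \to B$ be the fiber bundle with $B = DB_- \cup DB_+$, and set $E_\pm = \pi^{-1}(DB_\pm)$, so that $M = E_- \cup E_+$. Each $\pi^{-1}(B_\pm)$ is a closed embedded submanifold of $M$, and $E_\pm$ is an open neighborhood of it obtained as the preimage of a tubular neighborhood under the submersion $\pi$; pulling back the disk-bundle structure $DB_\pm \to B_\pm$ along $\pi|_{\pi^{-1}(B_\pm)} \colon \pi^{-1}(B_\pm) \to B_\pm$ and invoking the tubular neighborhood theorem then exhibits $E_\pm$ as the normal disk bundle of $\pi^{-1}(B_\pm)$ in $M$. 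For (e), a section $\sigma$ of the linear sphere bundle $\sph^\ell \to M \to B$ (with $M \cong S(V)$ for a rank-$(\ell+1)$ vector bundle $V \to B$) is equivalently a nowhere-zero unit-length section of $V$, yielding a splitting $V \cong \underline{\R} \oplus V'$ for a rank-$\ell$ subbundle $V'$. Fiberwise, each sphere $\sph^\ell \subset \R \oplus V'_b$ decomposes into two closed hemispheres, each canonically identified with the unit disk of $V'_b$, which globally gives $M \cong D(V') \cup_{S(V')} D(V')$.

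Finally, for (a), the starting observation is that each simply connected compact rank-one symmetric space $X^n$ itself admits a standard decomposition $X^n = D^n \cup DB_X$, where $DB_X$ is a disk bundle over a lower-dimensional rank-one symmetric space (for instance, $\sph^n = D^n \cup D^n$, $\CP^n = D^{2n} \cup \nu(\CP^{n-1})$, and similarly for $\HH P^n$ and $\OO P^2$). Excising the interior of an $n$-disk from each summand in $M \# N$ therefore leaves a disk bundle, and gluing along the resulting $\sph^{n-1}$ yields the required decomposition. The main subtlety I anticipate lies in (d), where some care is needed to ensure that $E_\pm$ genuinely inherits the structure of a smooth disk bundle (rather than merely the correct homotopy type) over $\pi^{-1}(B_\pm)$; case (f) then rests essentially on the Grove--Searle theory, which cannot be avoided at a purely topological level.
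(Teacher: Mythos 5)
Your arguments for (a), (b), (c) and (e) coincide, up to notation, with the paper's own proof: connected sums of CROSSes via the one-point/disk-bundle decomposition, cohomogeneity-one via the slice theorem and the fact that simple connectivity forces the orbit space to be an interval, passage to a free subaction preserving the equivariant disk-bundle structure, and the hemisphere decomposition of a sphere bundle with a section. For (f) you sketch the soul-type argument with the codimension-one fixed-point component $F$ and the set at maximal distance from it; this is indeed the content, but the result you need is Spindeler's (the paper cites his thesis \cite{Sp}), whereas Grove--Searle's theorems address a different symmetry hypothesis, so the attribution should be adjusted.

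The genuine divergence is in (d). The paper circumvents the bundle-theoretic subtlety entirely by applying Qian--Tang: it equips $M$ with a Riemannian metric making $\pi$ a Riemannian submersion, pulls back the codimension-one singular Riemannian foliation on the base, and then invokes Bolton to convert the resulting codimension-one foliation with two singular leaves into a double disk-bundle decomposition. You instead try to show directly that $E_\pm = \pi^{-1}(DB_\pm)$ is smoothly a disk bundle over $\pi^{-1}(B_\pm)$, and you correctly flag that ``some care is needed to ensure that $E_\pm$ genuinely inherits the structure of a smooth disk bundle.'' That gap is real: the tubular neighbourhood theorem only furnishes \emph{some} tubular neighbourhood of $\pi^{-1}(B_\pm)$ diffeomorphic to the pulled-back normal disk bundle, not that the specific closed set $E_\pm$ is one. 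To close it you need an additional argument, for instance: choose an Ehresmann connection on the compact-fibred bundle $\pi|_{E_\pm}\colon E_\pm \to DB_\pm$ and use horizontal lifts of the radial paths in each disk fibre $D^{\ell_\pm+1}_b$ to build a diffeomorphism $D\bigl((\pi|_{\pi^{-1}(B_\pm)})^*V_\pm\bigr) \to E_\pm$, where $V_\pm$ is the vector bundle underlying $DB_\pm$; equivalently, use that $\mathrm{id}_{DB_\pm}$ and the inclusion-after-retraction are smoothly homotopic, so the smooth bundle $E_\pm \to DB_\pm$ is isomorphic to the pullback $p_\pm^*(\pi^{-1}(B_\pm)) = DB_\pm \times_{B_\pm} \pi^{-1}(B_\pm)$, which is manifestly a $D^{\ell_\pm+1}$-bundle over $\pi^{-1}(B_\pm)$ restricting on the boundary to the correct sphere bundle. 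With that lemma supplied, your direct approach is a valid and arguably more elementary alternative to the paper's Riemannian-foliation argument; as written, it leaves that step as an assertion.
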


% PROOF

\begin{proof}
\eqref{i:conn_sum} 
This is a simple consequence of the standard fact that  
removing a point from a non-spherical, simply connected, compact, rank-one symmetric space yields a disk bundle over a lower-dimensional compact, rank-one symmetric space. 

\eqref{i:cohom1} 
In this case, the statement is well known and follows from the Slice Theorem and fundamental group considerations (see, for example, \cite[Section 1]{Ho} and \cite[Section 1]{GWZ}).
In particular, if $G$ acts on $M$ with cohomogeneity one, then there are closed subgroups $H \In K_\pm \In G$ with $K_\pm / H \cong \sph^{\ell_\pm}$ and such that $M$ is equivariantly diffeomorphic to the union of the disk bundles $G \x_{K_\pm} D^{\ell_\pm + 1}$ glued (equivariantly) along their common boundary $G \x_{K_\pm} \sph^{\ell_\pm} \cong G/H$.

\eqref{i:BiqFol}  
Suppose $G$ acts on $M'$ with cohomogeneity one and that there is a subgroup $U \In G$ which acts freely on $M'$ with quotient $M$.  Observe first that the $U$ action on $M'$ preserves the orbits of the $G$ action.  Now, via the equivariant diffeomorphism mentioned in the proof of \eqref{i:cohom1} above, $U$ acts freely on each of the disk bundles $G \x_{K_\pm} D^{\ell_\pm + 1}$ by the action induced from the action of $U$ by left multiplication on 
the first factor of the product $G \x D^{\ell_\pm + 1}$.  As the $U$ action commutes with the action of $K_\pm$ on the right of the first factor, it follows that $U \bs (G \x_{K_\pm} D^{\ell_\pm + 1})$ is diffeomorphic to $(U \bs G) \x_{K_\pm} D^{\ell_\pm + 1}$.  These disk bundles both have boundary diffeomorphic to the biquotient $U \bs G / H$ and the equivariant gluing map in the double disk-bundle decomposition of $M'$ now induces a gluing of the quotient disk bundles, yielding the desired double disk-bundle decomposition of $M$.

\eqref{i:base} 
Suppose $Y \to M \to N$ is a fiber bundle such that $N$ is diffeomorphic to a double disk bundle $DB_- \cup_L DB_+$.  By \cite{QT}, there is a Riemannian metric $g_N$ on $N$ yielding a singular Riemannian foliation with singular leaves $B_\pm$ and regular leaf diffeomorphic to $L$.  If $(g_x)_{x \in N}$ is a smoothly varying family of Riemannian metrics on the fibers (that is, on $Y$), then a standard partition-of-unity argument yields a (unique) complete Riemannian metric $g_M$ on $M$ inducing the metric $g_x$ on the fiber $Y_x$, for each $x \in N$, and such that the projection map $(M, g_M) \to (N, g_N)$ is a Riemannian submersion.

On the other hand, it is well known
that, by pulling back the leaves of the foliation on the base, a singular Riemannian foliation can be lifted via a Riemannian submersion and, moreover, the codimensions of the leaves are preserved.  Therefore, $(M, g_M)$ admits a codimension-one singular Riemannian foliation with two singular leaves and, hence, a double disk-bundle decomposition (see, for example, \cite{Bo}).

\eqref{i:fiber} 
Suppose that $\sph^k \to M \to N$ is a linear sphere bundle admitting a smooth section $\sigma : N \to M$.  This can be viewed as the unit-sphere subbundle of a rank-$(k+1)$ vector bundle $\pi : E \to N$ equipped with a smooth fiberwise inner product $\< \,, \, \>$.  Therefore, $M$ can be decomposed as the union of the disk bundles $M_- = \cup_{x \in N} \{v \in \sph^k_x \mid \<v, \sigma(x)\> \leq 0 \}$ and $M_+ = \cup_{x \in N} \{v \in \sph^k_x \mid \<v, \sigma(x)\> \geq 0 \}$ over $N$.

\eqref{i:FPH} 
This assertion is taken directly from the Ph.D.~thesis  of Spindeler \cite{Sp}. 
\end{proof}

Whereas the double disk-bundle decomposition in Proposition \ref{P:SuffCond}\eqref{i:cohom1} admits a natural codimension-one singular Riemannian foliation with homogeneous leaves, notice that the decomposition in \eqref{i:BiqFol} admits a codimension-one singular Riemannian foliation with biquotient leaves, all the while retaining many of the characteristics of a cohomogeneity-one manifold.  This breaking of symmetry should have many applications and, indeed, has already been applied in \cite{GKS1}. Furthermore, observe that such decompositions arise whenever one has a compact Lie group $G$ and closed subgroups $H \In K_\pm \In G \x G$ with $K_\pm / H \cong \sph^{\ell_\pm}$ and such that $K_\pm$ act freely on $G$ via the respective restrictions of the action 
$$
(G \x G) \x G \to G \,;\, ((g_1, g_2), g) \mapsto g_1 \, g \, g_2^{-1} \,.
$$
This observation follows easily from the well-known diffeomorphism $G \cong \Delta G \bs (G \x G)$, where $\Delta G$ is the diagonal subgroup in $G \x G$, after first constructing a cohomogeneity-one $(G \x G)$-manifold with the given data and then applying Proposition \ref{P:SuffCond}\eqref{i:BiqFol} to the free $\Delta G$ subaction.

Manifolds admitting double disk-bundle decompositions arise frequently in geometry, as the following examples illustrate.

% LEMMA

\begin{lem}
\label{L:GroupDDBD}
Let $G$ be a compact, connected Lie group which is not isomorphic to a finite quotient of a product $\prod_{i=1}^m G_i$, where $G_i \in \{\Eseven, \Eeight\}$ for all $i \in \{1, \dots, m\}$.
Then $G$ admits a cohomogeneity-one action and, hence, a double disk-bundle decomposition.
\end{lem}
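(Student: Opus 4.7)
The plan is to reduce to the universal cover of $G$, treat a torus summand and the semisimple factors separately, and invoke Proposition \ref{P:SuffCond}\eqref{i:cohom1} to pass from the cohomogeneity-one action to a double disk-bundle decomposition. First I would write the universal cover as $\widetilde{G} \cong T^k \x \prod_{i=1}^{m} G_i$, with $T^k$ a torus and each $G_i$ a simply connected compact simple Lie group, so that $G \cong \widetilde{G}/\Gamma$ for some finite central subgroup $\Gamma \In Z(\widetilde{G})$. The hypothesis is equivalent to saying that either $k \geq 1$ or some $G_i$ is not isomorphic to $\Eseven$ or $\Eeight$. The key observation is that any action of a subgroup of $\widetilde{G} \x \widetilde{G}$ on $\widetilde{G}$ by two-sided (and in particular one-sided) multiplication commutes with multiplication by the central subgroup $\Gamma$, and so descends to an action of the same cohomogeneity on $G$.

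If $k \geq 1$, the codimension-one closed subgroup $H := T^{k-1} \x \prod_{i} G_i \In \widetilde{G}$ acts on $\widetilde{G}$ by left multiplication with orbit space $T^{k-1}\bs T^k \cong \sph^1$, which descends to the required cohomogeneity-one action on $G$. If instead $k = 0$ and, after reordering, $G_1 \notin \{\Eseven, \Eeight\}$, I would construct a closed subgroup $K_1 \In G_1 \x G_1$ whose two-sided action $(a,b) \cdot g = a g b^{-1}$ on $G_1$ has one-dimensional orbit space, and then extend by the transitive two-sided action of $G_i \x G_i$ on each $G_i$ for $i \geq 2$. The resulting product action of $K_1 \x \prod_{i \geq 2}(G_i \x G_i)$ on $\widetilde{G}$ still has one-dimensional orbit space and descends to $G$ as before.

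The main obstacle is the construction of the cohomogeneity-one action on the simple factor $G_1$. When $G_1 \in \{\Spin(n+1), \SU(n+1), \Sp(n+1), \Ffour\}$, this is immediate from the Cartan decomposition $G_1 = L \cdot A \cdot L$ associated to the corresponding compact rank-one symmetric space $G_1/L$ (respectively $\sph^n$, $\CP^n$, $\HH P^n$, $\OO P^2$); here $A$ is one-dimensional, so $K_1 := L \x L$ has one-dimensional orbit space $L \bs G_1 / L$. The remaining simple groups $\Gtwo$ and $\Esix$ admit no compact rank-one symmetric space quotient and so require explicit alternative constructions; for these I would appeal to the known classifications of cohomogeneity-one actions on compact simple Lie groups (in the spirit of Onishchik and Kollross), which single out precisely $\Eseven$ and $\Eeight$ as the groups where no such action exists. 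Combined with Proposition \ref{P:SuffCond}\eqref{i:cohom1}, this will yield the desired double disk-bundle decomposition.
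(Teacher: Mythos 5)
Your proposal follows essentially the same route as the paper: pass to the structure decomposition $G \cong (T^k \x \prod G_i)/\Gamma$, build a cohomogeneity-one action on the product which commutes with the central $\Gamma$, and descend. The only genuine point of departure is that you attempt to construct the cohomogeneity-one action on a classical simple factor explicitly via the $L \cdot A \cdot L$ decomposition for a rank-one symmetric pair, only falling back on the Kollross classification for $\Gtwo$ and $\Esix$. This is a pleasant elementary observation, but since you still need Kollross for $\Gtwo$ and $\Esix$, the net reliance on the classification is the same as in the paper, which simply cites \cite{Ko1,Ko2} for all simple groups other than $\Eseven$, $\Eeight$. (Two small slips: $T^k \x \prod G_i$ is not the universal cover of $G$ -- the universal cover of a torus is $\R^k$ -- it is the product group in the structure theorem; and for $\SU(n+1)$ the relevant symmetric subgroup is $L = \mathrm{S}(\Un(n)\x\Un(1))$, not $\SU(n)$.)

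There is, however, a genuine gap at the final step. You declare at the outset that you will ``invoke Proposition \ref{P:SuffCond}\eqref{i:cohom1} to pass from the cohomogeneity-one action to a double disk-bundle decomposition,'' and this is the only mechanism you offer. But Proposition \ref{P:SuffCond} is stated only for simply connected $M$, which $G$ need not be; and, more to the point, the slice-theorem argument behind Proposition \ref{P:SuffCond}\eqref{i:cohom1} yields a double disk bundle only when the orbit space is a closed interval. In your $k \geq 1$ case the group $T^{k-1} \x \prod G_i$ acting by left multiplication has orbit space $\sph^1$, and the corresponding quotient map is a fiber-bundle projection $G \to \sph^1$, not a pair of disk bundles over singular orbits. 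To finish this case you must instead invoke Proposition \ref{P:SuffCond}\eqref{i:base}, using that $\sph^1$ itself is a double disk bundle (two arcs glued at their endpoints). This is precisely the case distinction the paper makes explicit in its proof, and your argument should do the same.
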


% PROOF

\begin{proof}  
Recall that every compact, connected Lie group $G$ is isomorphic to the quotient $G'/\Gamma$ of a product $G' = T^k \x \prod_{i=1}^m G_i$ by a finite subgroup $\Gamma$ of the center of $G'$, where $T^k$ is a torus of rank $k$ and each $G_i$ is a simply connected, compact, simple Lie group.  In particular, if $K' \In G' \x G'$ acts effectively on $G'$ with cohomogeneity one, then it commutes with the action of $\Gamma$ and induces an effective cohomogeneity-one $K'' = K'/(K' \cap \Gamma)$ action on $G'/\Gamma$.  The only possible quotient spaces under this action are a closed interval and a circle.  In the first case, it follows as in Proposition \ref{P:SuffCond}\eqref{i:cohom1} that $G'/\Gamma$ admits a double disk-bundle decomposition.  In the case that the quotient space is a circle, then all $K''$ orbits are principal $G'/\Gamma$ and the quotient map $G'/\Gamma \to \sph^1$ is a bundle projection map.  In particular, it now follows from Proposition \ref{P:SuffCond}\eqref{i:base} that $G'/\Gamma$ admits a double disk-bundle decomposition.  By making use of the isomorphism, it is clear that in each case $G$ also admits a cohomogeneity-one action and, hence, a double disk-bundle decomposition.

It remains only to demonstrate that there is a cohomogeneity-one action on each possible product group $G' = T^k \x \prod_{i=1}^m G_i$.  If there is some $i_0 \in \{1, \dots, m\}$ such that $G_{i_0} \not\in \{\Eseven, \Eeight\}$, then the statement follows immediately from the classification by Kollross of cohomogeneity-one actions on compact, simple Lie groups \cite{Ko1, Ko2}.  Indeed, if $i_0 = m$, for example, then there is a subgroup $H_m \In G_m \x G_m$ acting on $G_m$ by cohomogeneity one and, therefore, the group $K' = T^k \x \prod_{i=1}^{m-1} G_i \x H_m$ acts on $G'$ with cohomogeneity one, as desired.

On the other hand, if $G' = T^k \x \prod_{i=1}^m G_i$, with $k > 0$ and $G_i \in \{\Eseven, \Eeight\}$ for all $i \in \{1, \dots, m\}$, then it is clear that $K' = T^{k-1} \x \prod_{i=1}^m G_i$ acts on $G'$ with cohomogeneity one and quotient space $\sph^1$.
\end{proof}

In \cite{Ko1} and \cite{Ko2}, Kollross made the additional observation that the simple Lie groups $\Eseven$ and $\Eeight$, when equipped with a bi-invariant metric, do not admit any isometric action of cohomogeneity one.  More generally, it is currently unknown whether $\Eseven$ and $\Eeight$ even admit a double disk-bundle decomposition.  As compact Lie groups with bi-invariant metrics are the simplest examples of Riemannian manifolds with non-negative sectional curvature, this suggests Grove's Double Soul Conjecture \cite{Gr} is quite subtle and difficult.  On the other hand, as remarked in \cite{Gr}, the situation appears to be better in the case of positive curvature.

% THEOREM - POS CURVATURE

\begin{thm}
\label{T:possec}
Every known example of a manifold admitting positive sectional curvature admits a double disk-bundle decomposition.
\end{thm}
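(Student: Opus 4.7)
The plan is to enumerate the (finite) list of known families of closed, simply connected Riemannian manifolds admitting positive sectional curvature and, for each family, exhibit a double disk-bundle decomposition by appealing to Proposition \ref{P:SuffCond}. The known list is: (i) the compact rank-one symmetric spaces; (ii) the two Berger spaces $B^7 = \SO(5)/\SO(3)$ and $B^{13} = \SU(5)/(\Sp(2) \cdot \sone)$; (iii) the Wallach flag manifolds $W^6 = \SU(3)/T^2$, $W^{12} = \Sp(3)/\Sp(1)^3$ and $W^{24} = \Ffour/\Spin(8)$; (iv) the Aloff-Wallach spaces $W^7_{k,\ell}$; (v) the Eschenburg biquotients (in dimensions six and seven); (vi) the Bazaikin biquotients in dimension thirteen; and (vii) the Grove-Verdiani-Ziller-Dearricott cohomogeneity-one $7$-manifold $P_2$.

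For families (i)--(iv) and (vii), the strategy is to verify that the underlying manifold admits a smooth cohomogeneity-one action by some subgroup of its isometry group, so that Proposition \ref{P:SuffCond}\eqref{i:cohom1} applies. The CROSS case is classical; $P_2$ is cohomogeneity-one by construction; and for each Berger, Wallach and Aloff-Wallach space there is a well-documented intermediate closed subgroup $H \In K \In G$ such that $K/H$ embeds as a hypersurface in $G/H$, yielding a cohomogeneity-one action of a suitable overgroup (for instance, the natural $\Un(2) \In \SU(3)$ acts on $W^7_{k,\ell}$ with cohomogeneity one, and analogous explicit subactions are known for the other spaces). Citation of the relevant literature for each subcase suffices.

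For the inhomogeneous biquotient families (v) and (vi), the plan is to apply Proposition \ref{P:SuffCond}\eqref{i:BiqFol}. Each Eschenburg space is a free $\sone$-quotient of $\SU(3)$ by a two-sided torus action, and each Bazaikin space is (up to a finite cover) a free $\sone$-quotient of $\SU(5)$ by an appropriate two-sided action. Since both $\SU(3)$ and $\SU(5)$ admit cohomogeneity-one actions by Lemma \ref{L:GroupDDBD}, one need only select such an action whose defining subgroup of $G \x G$ commutes with the free $\sone$-subaction so that the decomposition descends. As an alternative for some Eschenburg spaces, one can instead use Proposition \ref{P:SuffCond}\eqref{i:base}, since those are circle bundles over the cohomogeneity-one manifold $W^6$.

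The principal obstacle lies precisely in the biquotient cases (v) and (vi): namely, the compatibility step of producing, for each specific parameter choice, a cohomogeneity-one subgroup of $G \x G$ whose two-sided action commutes with (or at least is normalised by) the prescribed free torus subaction, so that Proposition \ref{P:SuffCond}\eqref{i:BiqFol} applies. This reduces to explicit bookkeeping inside $\SU(3) \x \SU(3)$ and $\SU(5) \x \SU(5)$ with the centraliser of the given torus, but involves no new geometric idea beyond Proposition \ref{P:SuffCond} and Lemma \ref{L:GroupDDBD}.
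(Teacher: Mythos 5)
Your treatment of the CROSSes, $P_2$, and the Eschenburg/Bazaikin biquotients matches the paper: the first two via cohomogeneity one, the biquotients via Proposition~\ref{P:SuffCond}\eqref{i:BiqFol} by realising the free torus action as a subaction of a cohomogeneity-one action on $\SU(3)$ or $\SU(5)$.  However, your handling of the Wallach flag manifolds and the Berger space $B^7 = \SO(5)/\SO(3)_{\mathrm{max}}$ contains a genuine gap.  You claim that each admits a cohomogeneity-one action exhibited by an intermediate closed subgroup $H \subseteq K \subseteq G$ with $K/H \cong \sph^k$, ``such that $K/H$ embeds as a hypersurface in $G/H$.''  This does not describe a cohomogeneity-one action: a chain $H \subseteq K \subseteq G$ with $K/H$ a sphere produces a \emph{fiber bundle} $\sph^k \cong K/H \to G/H \to G/K$, not a hypersurface or a cohomogeneity-one structure (for that one needs \emph{two} intermediate subgroups $K_\pm$, and the resulting principal orbit is $G/H$ sitting as a hypersurface in the new manifold $M$, not in $G/H$ itself).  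Moreover, it is not known that $\SU(3)/T^2$, $\Sp(3)/\Sp(1)^3$, $\Ffour/\Spin(8)$ or $\SO(5)/\SO(3)_{\mathrm{max}}$ admit cohomogeneity-one actions in the sense required for Proposition~\ref{P:SuffCond}\eqref{i:cohom1}.  The correct tool is Proposition~\ref{P:SuffCond}\eqref{i:base}: each homogeneous flag manifold is a linear sphere bundle over a CROSS (e.g.\ $\sph^2 \to \SU(3)/T^2 \to \CP^2$), the inhomogeneous flag $\SU(3)\bq T^2$ is a linear $\sph^2$-bundle over $\CP^2$, and the Berger space is diffeomorphic to a linear $\sph^3$-bundle over $\sph^4$ by the non-trivial result of Goette--Kitchloo--Shankar~\cite{GKiS}, which your argument omits.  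Since each CROSS admits a double disk-bundle decomposition, \eqref{i:base} finishes these cases.

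Two smaller inaccuracies: the $13$-dimensional Berger space $\SU(5)/(\Sp(2)\cdot\sone)$ is the homogeneous member of the Bazaikin family, so it belongs under your case~(vi) and is handled by \eqref{i:BiqFol}, not by any ``intermediate subgroup'' argument; and for the six-dimensional Eschenburg space $\SU(3)\bq T^2$, the free subaction is $T^2$, not $\sone$, so your phrasing ``free $\sone$-subaction'' does not cover it (nor is the paper's route for that space through \eqref{i:BiqFol} --- it is again the sphere-bundle argument \eqref{i:base}).  Once you replace the erroneous cohomogeneity-one claim by Proposition~\ref{P:SuffCond}\eqref{i:base} and cite the Goette--Kitchloo--Shankar diffeomorphism for $B^7$, the argument aligns with the paper's proof.
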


\begin{proof} As described in \cite{Zi}, the known examples of closed, simply connected Riemannian manifolds with positive sectional curvature comprise compact rank one symmetric spaces (CROSSes), an infinite family of Eschenburg spaces in dimension $7$ and an infinite family of Bazaikin spaces in dimension $13$, as well as the following sporadic examples:  the homogeneous flag manifolds $\SU(3) / T^2$, $\Sp(3)/\Sp(1)^3$ and $F_4/\Spin(8)$; the (homogeneous) Berger space $\SO(5)/\SO(3)_{\mathrm{max}}$, where the embedding $\SO(3) \to \SO(3)_\mathrm{max} \In \SO(5)$  is induced from the unique irreducible $5$-dimensional representation of $\SO(3)$; a biquotient $\SU(3)\bq T^2$ (the inhomogeneous flag); and a cohomogeneity-one manifold $P_2$. 

It is well-known that the CROSSes admit smooth cohomogeneity-one actions, as does $P_2$, by construction.  Each of the homogeneous flag manifolds can be written as a linear sphere bundle over a CROSS, while the inhomogeneous flag $\SU(3) \bq T^2$ is the total space of a linear $\sph^2$-bundle over $\CP^2$ and it was shown in \cite{GKiS} that $\SO(5)/\SO(3)_{\mathrm{max}}$ is diffeomorphic to a linear $\sph^3$-bundle over $\sph^4$.  Therefore, it follows from Proposition \ref{P:SuffCond}{\color{blue} (d)} that each admits a double disk-bundle decomposition.

On the other hand, in general, the Eschenburg spaces $\SU(3)\bq S^1_{\ul p, \ul q}$ and the Bazaikin spaces $\SU(5) \bq (\Sp(2)\cdot S^1_{\ul q})$ neither admit a cohomogeneity-one action nor appear as the total space of a nice fiber bundle.  Nevertheless, the free quotient action in each case is (or, at least, can be rewritten as) a subaction of a cohomogeneity-one action on $\SU(3)$ or $\SU(5)$, respectively.  By Proposition \ref{P:SuffCond}\eqref{i:BiqFol}, it then follows that each admits a double disk-bundle decomposition.
\end{proof}

% SS: TOPOLOGY OF DDB
%-----------------------------------

\section{Topology of double disk bundles}
\label{S:top}

Given the relative simplicity of the construction, it is possible to say quite a lot about the topology of double disk bundles.  Some useful results in this regard are collected here.

{ To begin, observe that the cylinder $D^1 \x \sph^1$ is a disk bundle over $\sph^1$ for which the boundary is disconnected, due to the fact that $\partial D^1 = \sph^0 \cong \{\pm 1\}$.  Hence, each component of the boundary of $D^1 \x \sph^1$ may be glued to (a component of) the boundary of a distinct disk bundle.  Therefore, it is possible that a closed manifold could decompose into more than two disk bundles.  For example, the sphere $\sph^2$ can be decomposed as the union of a chain of cylinders glued end to end and capped off by two disks $D^2 \x \{\mathrm{pt} \}$.  Of course, it is clear that capping off one end of the union of such a chain of cylinders yields a manifold diffeomorphic to the $2$-disk, so that the above decomposition of $\sph^2$ reduces to the union of two disks.  The following proposition shows that this reduction to a double disk-bundle decomposition is a general phenomenon.}

%{\color{blue} To begin, observe that, for any positive integer $k\geq 2$, the sphere $\sph^2$ admits a disk-bundle decomposition into $k$ disk bundles:  two of them are small neighborhoods of the north and south poles, while the remaining $k-2$ are neighborhoods of lines of latitude.  Of course, it is clear that the $k-2$ can be glued to the first two to obtain a decomposition of $\sph^2$ into two disk bundles.  The following proposition asserts that this idea works more generally.}

% PROPOSITION

\begin{proposition}
\label{P:connected}
Let $M$ be a smooth, closed, connected manifold which can be decomposed as the union of disk bundles glued together via diffeomorphisms of the components of their respective boundaries. Then $M$ admits a double disk-bundle decomposition $DB_- \cup_f DB_+$ for which both $B_\pm$ are connected.
\end{proposition}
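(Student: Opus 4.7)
The plan is to analyze the combinatorial structure of the decomposition via its dual multigraph and then perform local reductions until only two disk bundles remain. First I would split each disk bundle whose base is disconnected into its restrictions over the individual components, so that every piece is now a disk bundle over a connected base. Build the dual multigraph $G$ whose vertices are these pieces $P_i$ and whose edges record the boundary-component identifications (self-loops allowed, in case two components of a single piece are glued to each other). Connectedness of $M$ forces $G$ to be connected.

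Next I would carry out a degree analysis. A piece $P_i$ with fiber $D^{k_i+1}$ over a connected base $B_i$ has boundary given by the sphere bundle $\sph^{k_i} \to \partial P_i \to B_i$. When $k_i \geq 1$ this boundary is connected, so the vertex has degree one; when $k_i = 0$ the piece is an $I$-bundle, contributing degree one or two according to orientability. Since every vertex of $G$ has degree at most two and $G$ is connected, $G$ must be either a path or a cycle, with self-loops occurring only in a length-one cycle.

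In the path case $v_1 - v_2 - \cdots - v_n$ with $n \geq 2$, every internal vertex has degree two via two distinct edges and hence is an orientable $I$-bundle $I \x B_i$. I would absorb each such internal piece into a neighboring endpoint using the standard collar-neighborhood argument: gluing an $I$-bundle $I \x B$ onto a disk bundle $DB$ along $\{0\} \x B \cong \partial DB$ yields a manifold diffeomorphic to $DB$ itself, preserving its disk-bundle structure over the original base and only modifying the identification on its (free) boundary. Iterating collapses the path to a single edge between (updated) $v_1$ and $v_n$, giving the desired double disk-bundle decomposition with both bases connected.

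For the cycle case, every vertex is an orientable $I$-bundle. When the cycle has length at least two, two adjacent pieces $(I \x B_1) \cup_\phi (I \x B_2)$ can be reparametrized via the concatenation of two collars as a single $I$-bundle $I \x B_1$, decreasing the cycle length by one, and iteration reaches a bigon of two $I$-bundles over connected bases. When the cycle has length one, the single piece is $I \x B$ with its two boundary components identified via a diffeomorphism $\psi$, so $M$ is the mapping torus of $\psi$, i.e.\ a $B$-bundle over $\sph^1$; covering $\sph^1$ by two closed arcs and pulling back expresses $M$ as the union of two trivial $I$-bundles over $B$, producing a bigon. The main technical point throughout is to verify that the collar-absorption and concatenation operations can be performed smoothly and compatibly with the disk-bundle structure, which reduces to the standard uniqueness up to isotopy of smooth collar neighborhoods.
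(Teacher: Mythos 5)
Your proposal is correct and follows essentially the same strategy as the paper's proof: split into pieces with connected bases, observe that a piece's boundary is connected unless it is a trivial $I$-bundle, deduce the path/cycle dichotomy (which the paper phrases as ``either all $SB_i$ are disconnected or precisely two have connected boundary''), and collapse the intermediate cylinders via the uniqueness-of-collars argument (the paper cites Kosinski for this). Your dual-multigraph bookkeeping and the explicit treatment of the length-one cycle (mapping torus) are slightly more systematic than the paper's presentation, but the underlying argument is the same.
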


% PROOF

\begin{proof} 
Since $M$ is closed, it can be decomposed as the union of at most finitely many disk bundles.     Let $D^{\ell_i + 1} \to DB_i \to B_i$, $i = 1, \dots, m$, be the disk bundles in such a decomposition.  
%{\color{blue}, where each $B_i$ is connected}.  <------- This is dealt with on the next line
As each $DB_i \In M$ is compact, it follows that each base manifold $B_i$ is closed.  Furthermore, it may be assumed without loss of generality that each $B_i$ is connected.  Let $\sph^{\ell_i} \to SB_i \to B_i$, $i = 1, \dots, m$, denote the corresponding sphere bundles.  The long exact homotopy sequence for $\sph^{\ell_i} \to SB_i \to B_i$ yields that $SB_i$ has at most two components, where $SB_i$ being disconnected implies that $\ell_i = 0$ and %, consequently, 
that $DB_i \cong B_i \x [-1, 1]$.

As $M$ is closed and connected, either all of the sphere bundles $SB_i$, $i =1, \dots, m$, are disconnected, or there are precisely two disk bundles with connected boundary.  After relabelling the disk bundles $DB_i$ and, if necessary, reparametrizing their fibers, it follows from the hypothesis that, if all $SB_i$ are disconnected, there are diffeomorphisms $f_i\colon B_i \x \{+1\} \to B_{i+1} \x \{-1\}$, for all $i \in \{1, \dots, m-1\}$, and $f_m \colon B_m \x \{+1\} \to B_1 \x \{-1\}$.  On the other hand, if $SB_1$ and $SB_m$ are connected (and $m > 2$), then the diffeomorphisms $f_1$ and $f_{m-1}$ may be replaced by $f_1\colon SB_1 \to B_2 \x \{-1\}$ and $f_{m-1}\colon B_{m-1} \x \{+1\} \to SB_m$, respectively, while $f_m$ does not occur.

It is, however, well known that in both cases $DB_1 \cup_{f_1} \dots \cup_{f_{m-2}} DB_{m-1}$ is diffeomorphic to $DB_1$, independent of the choices of diffeomorphisms $f_1, \dots, f_{m-2}$; see, for example, \cite[Chapter VI, Section 5]{Kos}.  Hence, there is always a diffeomorphism $f \colon SB_1 \to SB_m$ such that $M$ is diffeomorphic to $DB_1 \cup_f DB_m$, as desired.
\end{proof}

Recall that, if a manifold $M$ is the total space of a fiber bundle over $\sph^1$, then $M$ has infinite fundamental group and, by Proposition \ref{P:SuffCond}\eqref{i:base}, it admits a double disk-bundle decomposition $(B_- \x [-1, 1]) \cup_f (B_+ \x [-1,1])$ with $B_- \cong B_+$ being of codimension one.  In fact, the permissible codimensions of $B_\pm$ in a double disk bundle $M = DB_- \cup_f DB_+$ are always restricted by the fundamental group of $M$.

% PROPOSITION

\begin{proposition}
\label{P:exceptional}  
Let $M$ be a smooth, closed, simply connected manifold which admits a double disk-bundle decomposition  $DB_- \cup_L DB_+$ with $B_\pm$ connected.  Then $B_\pm$ are both of codimension $\geq 2$.
\end{proposition}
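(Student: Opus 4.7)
The approach is to argue by contradiction. Assume, without loss of generality, that $B_-$ has codimension $1$ in $M$, meaning $\ell_- = 0$. I will show that this forces $\pi_1(M) \neq 0$, contradicting the hypothesis. Since $DB_-$ is a (linear) $I$-bundle over the connected base $B_-$, the common boundary $L = \partial DB_-$ is an $\sph^0$-bundle over $B_-$ and hence has either one or two connected components; the proof splits accordingly.

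If $L$ is disconnected, then $DB_- \cong B_- \x I$ and $L \cong B_- \sqcup B_-$.  The sphere bundle $\sph^{\ell_+} \to L \to B_+$ over the connected base $B_+$ must therefore also have disconnected total space, which forces $\ell_+ = 0$ and the bundle to be trivial, so $DB_+ \cong B_+ \x I$ and $B_+ \cong B_-$. The manifold $M$ is then obtained from two cylinders by identifying pairs of boundary components via diffeomorphisms, which realizes $M$ explicitly as a fiber bundle over $\sph^1$ with connected fiber $B \cong B_- \cong B_+$ (one sees this by parametrising the two intervals end-to-end and projecting onto the resulting circle). The long exact homotopy sequence of this fibration yields a surjection $\pi_1(M) \twoheadrightarrow \pi_1(\sph^1) = \Z$, contradicting $\pi_1(M) = 0$.

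If $L$ is connected, then $L \to B_-$ is the non-trivial double cover, so the image of $\pi_1(L)$ in $\pi_1(B_-)$ is a subgroup of index $2$; this yields a surjection $\rho_-\colon \pi_1(B_-) \twoheadrightarrow \Z/2$ whose kernel is precisely that image. Applying van Kampen's theorem to the open cover $\{U_-, U_+\}$, where $U_\pm \simeq B_\pm$ and $U_- \cap U_+ \simeq L$, gives $\pi_1(M) \cong \pi_1(B_-) *_{\pi_1(L)} \pi_1(B_+)$. If $\ell_+ = 0$ as well, an identical argument produces a surjection $\rho_+\colon \pi_1(B_+) \twoheadrightarrow \Z/2$; since both $\rho_\pm$ vanish on the image of $\pi_1(L)$, they combine to a surjection $\pi_1(M) \twoheadrightarrow \Z/2 * \Z/2$ onto the infinite dihedral group, which is non-trivial. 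If $\ell_+ \geq 1$, the long exact homotopy sequence of $\sph^{\ell_+} \to L \to B_+$ shows that $\pi_1(L) \twoheadrightarrow \pi_1(B_+)$ is surjective; the amalgamated product then reduces to a quotient of $\pi_1(B_-)$ by the normal closure of a subgroup contained in the image of $\pi_1(L)$, hence contained in $\ker(\rho_-)$, so $\rho_-$ descends to a surjection $\pi_1(M) \twoheadrightarrow \Z/2$. In every case, $\pi_1(M) \neq 0$, the desired contradiction.

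The main technical point is keeping track of the interaction between the double-cover structure $L \to B_-$ and the sphere-bundle structure $L \to B_+$ inside the amalgamated-product computation. The crucial observation is that the $\Z/2$-quotient coming from the non-trivial double cover is never killed by relations originating either from the trivial $I$-bundle on the other side or from a higher-dimensional sphere fiber, so the obstruction to simple connectedness survives in every subcase.
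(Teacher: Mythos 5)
Your proof is correct, but it takes a genuinely different route from the paper's. The paper gives a short intersection-theoretic argument: the interval fiber $\pi_-^{-1}(b)$ of the $I$-bundle $DB_-\to B_-$ over a point $b\in B_-$ meets the closed, codimension-one submanifold $B_-$ transversally in the single point $b$; concatenating it with any arc in $DB_+$ joining its two endpoints produces a closed curve $c_b$ in $M$ with $I_M([B_-],[c_b])=\pm 1$ under the intersection pairing $H_{n-1}(M)\x H_1(M)\to\Z$, contradicting the vanishing of $H_1(M)$ forced by simple connectivity. Your argument instead avoids intersection theory and Poincar\'e duality entirely, computing $\pi_1(M)$ via van Kampen's theorem and the covering-space structure of the $\sph^0$-bundle $L\to B_-$, with a case split on whether $L$ is disconnected (yielding a fiber bundle over $\sph^1$ and a surjection $\pi_1(M)\twoheadrightarrow\Z$) or a non-trivial double cover of $B_-$ (yielding a surjection onto $\Z/2$ or $\Z/2*\Z/2$). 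Both are valid. The paper's proof is shorter and in fact only needs $H_1(M)=0$; your proof uses only fundamental-group machinery, produces the finer conclusion that a codimension-one singular leaf forces $\pi_1(M)$ to surject onto $\Z$ or $\Z/2$, and makes the geometric source of the obstruction explicit.
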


% PROOF

\begin{proof}Suppose, without loss of generality, that $B_- \In M$ is of codimension one and let $\pi_- : DB_- \to B_-$ denote the bundle projection map.  Thus, the fiber $\pi_-^{-1}(b) \In DB_-$ over a point $b \in B_- $ is diffeomorphic to an interval and intersects $B_-$ transversally in a single point.  Moreover, if the two points comprising $\pi_-^{-1}(b) \cap L$ are joined by an arbitrary curve $c_+$ in $DB_+$, then the closed curve $c_b = \pi_-^{-1}(b) \cup c_+$ in $M$ intersects the closed submanifold $B_-$ transversally in a single point.   Consequently, the intersection form
\[
I_M \colon H_{n-1}(M) \x H_1(M) \to \Z
\]
yields $I_M([B_-], [c_b]) = \pm 1$, where $[B_-]$ and $[c_b]$ are the homology classes represented by $B_-$ and $c_b$, respectively.  However, this is a contradiction, since $M$ being simply connected implies that $H_1(M) = 0$ and, hence, that the intersection form $I_M$ is trivial.  Therefore, the codimension of $B_-$ must be at least two.
\end{proof}

Note that an analogue of the above proposition for closed, smooth, simply connected, cohomogeneity-one manifolds appeared in \cite[Lemma 1.6]{GWZ}.

The following characterization of trivial orientable circle bundles is often useful when dealing with double disk-bundle decompositions where at least one of the singular leaves is of codimension two. In the sequel, an element of a finitely genereated abelian group $A$ will be called a \emph{generator} if it is neither torsion nor a non-trivial multiple of any other element.  Equivalently, an element of $A$ will be called a generator if it generates a free abelian subgroup of rank one which is not properly contained in any other free abelian subgroup of rank one.

% LEMMA

\begin{thm}  
\label{T:triv}
Let $\sph^1 \to L\to B$ be an orientable circle bundle over a connected manifold $B$ with $\pi_1(L)$ abelian.  Then the bundle is trivial if and only if the induced homomorphism $\pi_1(\sph^1) \to \pi_1(L)$ is injective with image containing a generator of $\pi_1(L)$.
\end{thm}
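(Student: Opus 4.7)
The forward direction is immediate: if $L \cong \sph^1 \times B$, then $\pi_1(L) \cong \pi_1(\sph^1) \times \pi_1(B)$ (which, being abelian by hypothesis, is a direct sum), and the inclusion of the first factor is visibly injective with image containing a generator.

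For the converse, the plan is to construct a cohomology class $\phi \in H^1(L;\Z)$ whose restriction to each circle fiber is a generator of $H^1(\sph^1;\Z)$, and then to apply the Gysin sequence to conclude that the Euler class of the bundle vanishes; since orientable circle bundles over $B$ are classified by their Euler class in $H^2(B;\Z)$, this will yield triviality of $L \to B$. To produce $\phi$, I would use the injectivity of $\pi_1(\sph^1) \to \pi_1(L)$ to identify $\pi_1(\sph^1)$ with an infinite cyclic subgroup of $\pi_1(L)$, and denote by $\alpha$ the image of a generator. The assumption that $\alpha$ is a generator of $\pi_1(L)$ in the paper's sense means that, in the abelian group $\pi_1(L)$, the image of $\alpha$ in $\pi_1(L)/\mathrm{torsion}$ is a primitive element. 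Assuming $\pi_1(L)$ is finitely generated (automatic for compact $L$, which is the case of interest), primitivity allows $\alpha$ to be extended to a basis of the free part, yielding a homomorphism $\varphi : \pi_1(L) \to \Z$ with $\varphi(\alpha) = 1$. Since $\pi_1(L)$ is abelian, Hurewicz gives $H_1(L;\Z) \cong \pi_1(L)$, so $\varphi$ determines a class $\phi \in H^1(L;\Z)$; equivalently, $\varphi$ is realized by a map $\phi : L \to \sph^1 = K(\Z,1)$ whose restriction to any fiber is of degree $\pm 1$.

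Next I would feed $\phi$ into the low-degree fragment
\[
0 \to H^1(B;\Z) \xrightarrow{\pi^*} H^1(L;\Z) \xrightarrow{\pi_*} H^0(B;\Z) \xrightarrow{\cup e} H^2(B;\Z)
\]
of the Gysin sequence for the oriented circle bundle. The fact that $\phi$ restricts to a generator on each fiber says precisely that integration along the fiber sends $\phi$ to $\pm 1 \in H^0(B;\Z) = \Z$, so $\pi_*$ is surjective. Exactness at $H^0(B;\Z)$ then forces $e = 1\cdot e = 0$, and the bundle is trivial.

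I expect the main technical point to be the first step --- extracting the honest homomorphism $\varphi$ from the mild-sounding ``generator'' hypothesis. For finitely generated abelian groups this is standard, and it is the only place where the generator condition (as opposed to mere injectivity of $\pi_1(\sph^1) \to \pi_1(L)$) does real work; the rest is a routine Gysin computation.
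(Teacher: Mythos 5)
Your proposal is correct and follows essentially the same route as the paper's proof: both reduce triviality to vanishing of the Euler class, both run the low-degree Gysin sequence, and both pass from $\pi_1$ to $H_1$ via Hurewicz (the paper's definition of ``generator'' is already restricted to finitely generated abelian groups, so your finite-generation caveat is harmless and in fact built in). The only cosmetic difference is that the paper spells out, via a commutative diagram involving the Thom isomorphism, why the Gysin map $\pi_\ast \colon H^1(L) \to H^0(B)$ detects the restriction of a class to a fiber -- the step you summarize as ``integration along the fiber sends $\phi$ to $\pm 1$'' -- while you are more explicit than the paper about extending the image of the fiber class to a splitting homomorphism $\varphi \colon H_1(L) \to \Z$.
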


% PROOF

\begin{proof}
Recall, for example, from \cite[Prop.\ 6.15]{Mo}, that every orientable circle bundle $\sph^1 \to L\to B$ is principal and, hence, classified by its Euler class $e \in H^2(B)$.  In particular, $\sph^1 \to L\to B$ is trivial if and only if $e = 0$.  

The Gysin sequence corresponding to $\sph^1 \to L\to B$ yields an exact sequence
$$
0 \to H^1(B) \to H^1(L) \xrightarrow{f} H^0(B) \xrightarrow{\smile e} H^2(B) \to \cdots
$$

Let $i : \sph^1 \to L$ be inclusion of a circle fiber and let $j : (D^2, \sph^1) \to (E, L)$ be the corresponding fiber inclusion of pairs, where $D^2 \to E \to B$ is the disk bundle with boundary $\sph^1 \to L\to B$.  There is a commutative diagram (see, for example, \cite[Section 4D]{Ha2})
$$
\xymatrix{
H^1(\sph^1) \ar[d]_\cong & H^1(L) \ar[l]_{i^*} \ar[d] \ar[r]^f & H^0(B) \ar[dl]^\Phi \\
H^2(D^2, \sph^1) & H^2(E, L) \ar[l]^{j^*} & 
}
$$
where the vertical maps are those in the long exact sequences for the pairs, $j^*$ is an isomorphism, $\Phi$ is the Thom isomorphism and $f$ is the map in the Gysin sequence above.  Therefore, the above Gysin sequence can be modified to yield a commutative diagram
\beq
\label{E:modCD}
\xymatrix{
0 \ar[r] & H^1(B) \ar[r] & H^1(L) \ar[r]^f \ar[dr]^{i^*} & H^0(B) \ar[r]^{\smile e} & H^2(B) \ar[r] & \cdots \\
& & & H^1(\sph^1) \ar[u]_\cong & &
}
\eeq
Clearly, therefore, the Euler class $e$ is trivial if and only if $i^* : H^1(L) \to H^1(\sph^1)$ is surjective.  But, since $H^1(\sph^1)$ is free abelian, $i^*$ is surjective if and only if $i_* : H_1(\sph^1) \to H_1(L)$ is injective and maps a generator of $H_1(\sph^1)$ to a generator of $H_1(L)$.  Since $\pi_1(L)$ is abelian, naturality in the Hurewicz Theorem now ensures that  $e = 0$ if and only if the induced homomorphism $i_* : \pi_1(\sph^1) \to \pi_1(L)$ is injective and maps a generator of $\pi_1(\sph^1)$ to a generator of $\pi_1(L)$, as desired.
\end{proof}

In \cite{GH}, Grove and Halperin systematically studied spaces admitting a double disk-bundle decomposition from the perspective of rational homotopy theory. For the present work, it is useful to have a summary of their results adapted to the current situation.

% THM: GROVE & HALPERIN

\begin{theorem}[Grove--Halperin \protect{\cite{GH}}]
	\label{T:HOM_FIBER}
Suppose that a smooth, closed, simply connected manifold $M$ admits a double disk-bundle decomposition $DB_- \cup_L DB_+$, where $B_\pm$ are both connected.  If $F$ denotes the homotopy fiber of the inclusion $L \to M$, then $L$ and $F$ are nilpotent spaces and %, up to finite cover, 
$F$ is rationally rational homotopy equivalent to one of the spaces listed in Table \ref{table:Qtype}, where $A_m(4)$ denotes a certain simply connected topological space whose non-trivial rational homotopy groups are in degrees $4$, $7$ and $4m - 1$. Moreover, the possible fundamental groups of $F$ and codimensions of $B_\pm$ are indicated in Table \ref{table:Qtype}.
\end{theorem}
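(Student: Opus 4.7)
The plan is to transfer the rational-homotopy analysis of Grove--Halperin \cite{GH}, which was originally phrased in the setting of isoparametric foliations (equivalently, cohomogeneity-one actions and certain Dupin hypersurfaces), to the present more general double disk-bundle setting. The essential geometric input needed is that $M$ is the homotopy pushout
\[
M \simeq B_- \cup_L B_+,
\]
obtained because $DB_\pm$ deformation retracts onto $B_\pm$ and the gluing is along $L = \partial DB_\pm$, together with the two sphere bundles $\sph^{\ell_\pm} \to L \to B_\pm$. This is already the setup Grove--Halperin work in; none of their arguments use any ambient group action or isoparametric hypothesis beyond this diagram.

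First, I would verify the nilpotency statements. Since $B_\pm$ are connected and $\ell_\pm \geq 1$ by Proposition \ref{P:exceptional}, the long exact homotopy sequences of the sphere bundles give surjections $\pi_1(L) \twoheadrightarrow \pi_1(B_\pm)$. Van Kampen applied to the homotopy pushout yields $\pi_1(B_-) *_{\pi_1(L)} \pi_1(B_+) = \pi_1(M) = 1$, which together with the surjectivity forces enough collapsing to check, bundle by bundle, that $\pi_1(L)$ acts nilpotently on each $\pi_k(L)$. Thus $L$ is nilpotent, and then $F$, being the homotopy fiber of a map from $L$ into the simply connected space $M$, is nilpotent as well, with $\pi_1(F)$ abelian and surjecting onto $\pi_1(L)$.

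Next I would assemble the commutative diagram of rational homotopy long exact sequences of the three fibrations
\[
\sph^{\ell_-} \to L \to B_-, \qquad \sph^{\ell_+} \to L \to B_+, \qquad F \to L \to M,
\]
together with the Sullivan-model description of the pushout $M$ as the pullback (in cdga's) of models for $B_\pm$ along the common model for $L$. The resulting degree and parity analysis is the core of \cite{GH}: by tracking how the two Euler classes of the sphere bundles interact in the minimal model of $M$, one bounds $\dim_\Q \pi_k^\Q(F)$ in every degree by a constant depending only on $(\ell_-, \ell_+)$, and one shows that the minimal model of $F$ falls into one of a short list of possibilities indexed by the parities of $\ell_\pm$ (together with the quaternionic anomaly that produces the family $A_m(4)$, whose rational homotopy is concentrated in degrees $4$, $7$, $4m-1$). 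Finally, tracing back through the long exact sequences recovers the admissible $\pi_1(F)$ and codimensions $\ell_\pm + 1$ for each entry of Table \ref{table:Qtype}.

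The main obstacle is not the rational-homotopy computation itself, which is effectively imported from \cite{GH}, but the verification that Grove--Halperin's hypotheses are not used in an essential way: one must check that their arguments only invoke the pushout structure $M \simeq B_- \cup_L B_+$, the two sphere bundles, and simply-connectedness of $M$, all of which are available here. Once this is confirmed, the case enumeration, together with Poincaré duality for $M$ to eliminate further candidates, yields exactly the list of rational homotopy types tabulated in the statement.
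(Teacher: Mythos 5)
The paper does not prove this statement at all: Theorem \ref{T:HOM_FIBER} is a direct citation of Grove--Halperin \cite{GH}, whose results are formulated precisely for \emph{double mapping cylinders}, i.e.\ the homotopy pushout $B_- \leftarrow L \rightarrow B_+$ of two sphere fibrations, with no isoparametric, Dupin, or group-action hypothesis. Your characterization of \cite{GH} as ``originally phrased in the setting of isoparametric foliations'' understates their generality; the ``main obstacle'' you flag --- verifying that Grove--Halperin's hypotheses do not secretly use an ambient group action --- is a non-issue, since their arguments are already carried out at the level of double mapping cylinders. Thus the paper's approach (cite \cite{GH} and tabulate) is not only shorter but also the mathematically correct one, and the re-derivation you sketch adds nothing beyond what is already in \cite{GH}.

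There are also two substantive problems in your sketch. First, you assert that $F$ has ``$\pi_1(F)$ abelian.'' This is false: Table \ref{table:Qtype} explicitly lists $\pi_1(F) = Q_8$, the (nonabelian) quaternion group of order eight, in the case where both singular leaves have codimension two and both circle bundles $\sph^1 \to L \to B_\pm$ are non-orientable. A correct statement is that $\pi_1(F)$ is a \emph{central} extension of $\pi_1(L)$ by a quotient of $\pi_2(M)$ (since the image of the connecting homomorphism $\pi_2(M) \to \pi_1(F)$ is central); centrality, not commutativity, is what can be read off the fibration. Second, your nilpotency claim for $L$ is not actually argued: ``Van Kampen $\ldots$ forces enough collapsing to check, bundle by bundle, that $\pi_1(L)$ acts nilpotently on each $\pi_k(L)$'' describes no mechanism. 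Nilpotency of $L$ is one of the more delicate outputs of \cite{GH} (see their Theorem 1.3) and does not follow from a few words about the amalgamated free product decomposition of $\pi_1(M)$; in particular, $\pi_1(L)$ need not be abelian, and controlling its action on $\pi_k(L)$ requires the full rational-homotopy machinery and not a fundamental-group computation. As written, the proposal is therefore a plausible-looking outline that imports all the hard content from \cite{GH} (which it would have been better to simply cite) while introducing an error about $\pi_1(F)$.
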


% TABLE: RATIONAL HOMOTOPY TYPE

\begin{table}

\begin{tabular}{|Sc|Sc|Sc|Sc|}
\hline
\multirow{2}{*}{$\pi_1(F)$} & 
\multirow{2}{*}{$F \simeq_\Q$} & %finitely covered by $\tilde F \simeq_\Q$} & 
\multirow{2}{*}{$\{\alpha, \beta\} = \{\ell_\pm\}$} &  
\multicolumn{1}{>{\centering\let\newline\\\arraybackslash\hspace{0pt}}p{35mm}|}{Orientability of \newline  $\sph^{\ell_\pm}$-bundles} \\
\hline \hline

$\Z^2$ & $\sph^1 \x \sph^1 \x \Omega \sph^3$ & \multirow{4}{*}{$1 = \alpha = \beta$} & Both \\ 
\cline{1-2} \cline{4-4}

$\Z \oplus \Z_2$ & $\sph^1 \x \sph^3 \x \Omega \sph^5$ &  & One \\
\cline{1-2} \cline{4-4}

$Q_8$ & $\sph^3 \x \sph^3 \x \Omega \sph^7$ &  & Neither \\
\hline

\multirow{2}{*}{$\Z$} & $\sph^1 \x \sph^\beta \x \Omega \sph^{\beta + 2}$ & $1 = \alpha < \beta$ & Both \\
\cline{2-4}
& $\sph^1 \x \sph^{2\beta + 1} \x \Omega \sph^{2\beta + 3}$ & $1 = \alpha < \beta$, $\beta$ odd & $\sph^1$-bundle \\
\hline

\multirow{11}{*}{$0$} & $\sph^\alpha \x \sph^\beta \x \Omega \sph^{\alpha + \beta + 1}$ & $1 < \alpha \leq \beta$ &  \multirow{11}{*}{Both} \\ \cline{2-3}
& $\sph^\alpha \x \Omega \sph^{\alpha + 1}$ & $1 < \alpha = \beta$  &  \\ \cline{2-3}
& $\SU(3)/T^2 \x \Omega \sph^7$ & \multirow{4}{*}{$2 = \alpha = \beta$} &  \\ \cline{2-2}
& $\Sp(2)/T^2 \x \Omega \sph^9$ & &  \\ \cline{2-2}
& $\Gtwo/T^2 \x \Omega \sph^{13}$ & &  \\ \cline{2-3}
& $\Sp(3)/\Sp(1)^3 \x \Omega \sph^{13}$ & \multirow{4}{*}{$4 = \alpha = \beta$} &  \\ \cline{2-2}
& $A_4(4) \x \Omega \sph^{17}$ & &  \\ \cline{2-2}
& $A_6(4) \x \Omega \sph^{25}$ & &  \\ \cline{2-3}
& $\Ffour/\Spin(8) \x \Omega \sph^{25}$ & $8 = \alpha = \beta$ &  \\ \hline

\end{tabular}

\vspace{10pt}
\caption{Properties of the homotopy fiber $F$ and the bundles $\sph^{\ell_\pm} \to L \to B_\pm$ associated to a double disk-bundle decomposition $DB_- \cup_L DB_+$}
\label{table:Qtype}

\end{table}

Observe from the long exact homotopy sequence for the homotopy fibration $F \to L \to M$ that, in particular, $\pi_1(L)$ must be abelian whenever $\rank(\pi_1(L)) \geq 1$.  As a first, simple application of Theorem \ref{T:HOM_FIBER}, one obtains a criterion for 
a double disk bundle to be rationally elliptic.

% LEM: VANISHING OF THE RAT HOMOT GROUPS OF A SING LEAF IMPLIES RAT ELLIPTICITY

\begin{lemma} 
\label{L:high_enough}
Let $M$ be a smooth, closed, simply connected manifold which admits a double disk-bundle decomposition $DB_- \cup_L DB_+$ with $B_\pm$ connected.  If there exists a $j_0 \in \N$ such that the rational homotopy groups of some $X \in \{L, B_\pm\}$
satisfy $\pi_j^\Q(X) = 0$, for all $j \geq j_0$, then $M$ is rationally elliptic.
\end{lemma}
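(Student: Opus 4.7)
The plan is to propagate the hypothesis ``rational homotopy eventually vanishes'' from $X$ to $M$ by stepping through the two fibrations naturally attached to the decomposition: the sphere bundle $\sph^{\ell_\pm}\to L\to B_\pm$ and the homotopy fibration $F\to L\to M$.

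First, I would reduce to the case $X=L$. The long exact sequence of homotopy groups for $\sph^{\ell_\pm}\to L\to B_\pm$, tensored with $\Q$, couples the rational homotopy of the three spaces. Since $\pi_j^\Q(\sph^{\ell_\pm})=0$ for all $j>2\ell_\pm-1$, a three-term exactness argument shows that ``eventually vanishing'' rational homotopy is enjoyed by any one of $L,B_+,B_-$ if and only if it is enjoyed by all three. Hence one may assume $\pi_j^\Q(L)=0$ for all $j\geq j_1$ for some integer $j_1$.

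Next, I would bound the rational homotopy of $F$. By Theorem \ref{T:HOM_FIBER}, $F$ is rationally of the form $Y\times\Omega\sph^k$, where $Y$ is a product of one or two spheres, one of the homogeneous spaces $\SU(3)/T^2$, $\Sp(2)/T^2$, $\Gtwo/T^2$, $\Sp(3)/\Sp(1)^3$, $\Ffour/\Spin(8)$, or a space $A_m(4)$. All of these possibilities for $Y$ have only finitely many non-zero rational homotopy groups, and the same is true for $\Omega\sph^k$ (which has $\pi_{k-1}^\Q=\Q$, plus also $\pi_{2k-2}^\Q=\Q$ when $k$ is even). Thus there exists $j_2$ with $\pi_j^\Q(F)=0$ for all $j\geq j_2$.

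Finally, I would apply the long exact sequence for $F\to L\to M$, which contains the exact segment $\pi_j^\Q(L)\to\pi_j^\Q(M)\to\pi_{j-1}^\Q(F)$. For $j\geq \max(j_1,j_2+1)$ both flanking groups vanish, so exactness forces $\pi_j^\Q(M)=0$. Since $M$ is a simply connected finite CW complex, each $\pi_j^\Q(M)$ is finite-dimensional, so $\dim_\Q\pi_*^\Q(M)<\infty$; combined with the automatic bound $\dim_\Q H^*(M;\Q)<\infty$ for a closed manifold, this yields that $M$ is rationally elliptic. The main step requiring attention is the claim that every possible homotopy fiber in Table \ref{table:Qtype} has only finitely many non-zero rational homotopy groups, but this is a brief inspection rather than a real obstacle.
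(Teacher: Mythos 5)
Your proposal is correct and follows essentially the same route as the paper: reduce to $X = L$ via the long exact homotopy sequence of the sphere bundle, note from Theorem \ref{T:HOM_FIBER} (Table \ref{table:Qtype}) that $F$ has only finitely many non-trivial rational homotopy groups, and then use the long exact sequence for $F \to L \to M$ to force $\pi_j^\Q(M) = 0$ in high degrees.
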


% PROOF

\begin{proof}
In either case, since $L$ is a sphere bundle over $B_\pm$, all rational homotopy groups $\pi_j^\Q(L)$ of $L$ must vanish whenever $j \geq j_0$, for some $j_0 \in \N$. Let  $F$ be the homotopy fiber of the inclusion map $L \to M$. By Theorem \ref{T:HOM_FIBER},  
the rational homotopy groups of $F$ vanish in sufficiently high dimensions.  The long exact homotopy sequence for the homotopy fibration $F\to L \to M$ now yields that there is some $j_1 \in \N$ such that $\pi_j^\Q(M) = 0$, for all $j \geq j_1$, and, hence, that $M$ is rationally elliptic.
\end{proof}

If a manifold $M$ of arbitrary dimension admits a double disk-bundle decomposition $DB_- \cup_L DB_+$ with a singular leaf $B \in \{B_\pm\}$ of sufficiently low dimension, it turns out that $M$ is always rationally elliptic.

% PROPOSITION

\begin{proposition}
\label{P:L_BOUND}
Suppose that $M$ is a smooth, closed, simply connected manifold which admits a double disk-bundle decomposition $DB_- \cup_L DB_+$ with $B_\pm$ connected and $\dim(B)\leq 3$, for some $B \in \{B_\pm\}$.  Then $M$ is rationally elliptic.
\end{proposition}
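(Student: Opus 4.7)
Without loss of generality, suppose $B = B_-$ so that $\dim B_- \leq 3$. The plan is to apply Lemma \ref{L:high_enough} with $X = B_-$; for this, it suffices to show that $\pi_j^\Q(B_-) = 0$ for all sufficiently large $j$.

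The first step is to constrain $\pi_1(B_-)$. By Proposition \ref{P:exceptional}, $\ell_- \geq 1$, and so the long exact homotopy sequence of the sphere bundle $\sph^{\ell_-} \to L \to B_-$ yields a surjection $\pi_1(L) \to \pi_1(B_-)$. In turn, since $M$ is simply connected, the long exact sequence of the homotopy fibration $F \to L \to M$ realizes $\pi_1(L)$ as a quotient of $\pi_1(F)$. By Theorem \ref{T:HOM_FIBER}, $\pi_1(F)$ lies in $\{0, \Z, \Z \oplus \Z_2, \Z^2, Q_8\}$, each of which is finitely generated nilpotent; therefore $\pi_1(B_-)$ is too.

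Next, classify the possibilities for $B_-$ according to dimension. The cases $\dim B_- \in \{0,1\}$ are immediate: $B_-$ is either a point or $\sph^1$. In dimension $2$, the classification of closed surfaces, together with the observation that any closed surface of Euler characteristic $\leq 0$ other than $T^2$ has non-nilpotent fundamental group (as witnessed already by the Klein bottle, whose lower central series does not terminate), forces $B_-$ to be one of $\sph^2$, $\RP^2$, or $T^2$. In dimension $3$, Thurston's geometrization together with Perelman's theorem and the prime decomposition theorem imply that a closed connected 3-manifold with nilpotent fundamental group must be either a spherical space form, $\sph^2 \x \sph^1$ or its non-orientable twisted analogue, a flat manifold, or a Nil-manifold.

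Finally, in each case the universal cover $\tilde B_-$ is either a sphere $\sph^d$ with $d \leq 3$, $\sph^2 \x \R$, or contractible (i.e.\ $\R^3$ or the Heisenberg group). Since $\pi_j(B_-) \cong \pi_j(\tilde B_-)$ for $j \geq 2$, and each such cover has rational homotopy concentrated in only finitely many degrees, the required vanishing $\pi_j^\Q(B_-) = 0$ for $j$ sufficiently large follows; finite generation of nilpotent $\pi_1(B_-)$ also supplies $\rank(\pi_1(B_-)) < \infty$. Lemma \ref{L:high_enough} then delivers the rational ellipticity of $M$. I expect the main obstacle to be the dimension-$3$ classification, which relies on geometrization; the lower-dimensional cases and the final rational-homotopy check are elementary.
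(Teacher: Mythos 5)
Your argument is correct and follows the same overall structure as the paper's proof: use the fibrations $F \to L \to M$ and $\sph^{\ell_-} \to L \to B_-$ (with $\ell_- \geq 1$ by Proposition~\ref{P:exceptional}) to propagate the constraint on $\pi_1(F)$ from Table~\ref{table:Qtype} down to $\pi_1(B_-)$, then invoke the low-dimensional classification of closed manifolds with that fundamental-group restriction, and finish with Lemma~\ref{L:high_enough}. The paper's proof is a compressed version of yours. It records the sharper constraint that $\pi_1(B_-)$ is either $Q_8$ or abelian (rather than merely nilpotent), which lets it simply cite the fundamental-group table in \cite{AFW} to conclude that $B_-$ is finitely covered by one of $\sph^1$, $\sph^2$, $T^2$, $\sph^3$, $\sph^2\times\sph^1$, $T^3$; your weaker ``nilpotent'' constraint admits Nil-manifolds a priori (which in fact cannot occur here, since $\pi_1(F)$ has rank $\leq 2$ while a lattice in the Heisenberg group has rank $3$), so you appeal to geometrization to cover them, adding a little generality but not changing the conclusion that the universal cover of $B_-$ has rational homotopy concentrated in finitely many degrees. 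Both routes land on Lemma~\ref{L:high_enough} in the same way; yours is slightly heavier but perfectly valid.
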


% PROOF

\begin{proof}
If $F$ is the homotopy fiber of the inclusion $L \to M$ then, since $M$ is simply connected, the long exact homotopy sequence associated to the homotopy fibration $F\to L \to M$ ensures that $\pi_1(F) \to \pi_1(L)$ is surjective.  From the list of possible fundamental groups of $F$ given in Table \ref{table:Qtype}, it follows that $\pi_1(L)$ is either $Q_8$ or abelian.  Considering the  sphere bundle $\sph^\ell \to L \to B$ corresponding to $B$, it is clear that $\pi_1(B)$ is itself either $Q_8$ or abelian.  As $\dim(B) \leq 3$, this implies that $B$ is finitely covered by one of $\sph^1$, $\sph^2$, $T^2$, $\sph^3$, $\sph^2\times \sph^1$ or $T^3$; see, for example, \cite[Table 2, p.\ 25]{AFW}.  In particular, by Lemma~\ref{L:high_enough}, $M$ is rationally elliptic.
\end{proof}

As there are well-known classifications of simply connected, closed manifolds in low dimensions, it is convenient to have a criterion which ensures that there is a singular leaf in a double disk-bundle decomposition whose universal cover is closed.

% LEMMA

\begin{lemma}
\label{L:pi1B}
Suppose that $M$ is a smooth, closed, simply connected manifold which admits a double disk-bundle decomposition $DB_- \cup_L DB_+$ with $B_\pm$ connected.  Then $\rk(\pi_1(L)) \leq 1$ if and only if at least one of the singular leaves $B_\pm$ has finite fundamental group.
\end{lemma}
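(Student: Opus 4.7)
The approach would be to apply the long exact homotopy sequences of the sphere bundles $\sph^{\ell_\pm} \to L \to B_\pm$ together with Seifert--van Kampen applied to $M = DB_- \cup_L DB_+$. For the backward implication, assume without loss of generality that $\pi_1(B_-)$ is finite; tensoring the tail $\pi_1(\sph^{\ell_-}) \to \pi_1(L) \to \pi_1(B_-) \to 0$ with $\Q$ expresses $\pi_1^\Q(L)$ as a quotient of $\pi_1^\Q(\sph^{\ell_-})$, which has $\Q$-dimension at most one. Hence $\rk(\pi_1(L)) = \dim_\Q \pi_1^\Q(L) \leq 1$.

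For the forward implication, I would argue by contradiction, assuming $\rk(\pi_1(L)) \leq 1$ while both $\pi_1(B_\pm)$ are infinite. By Proposition~\ref{P:exceptional}, each $\ell_\pm \geq 1$, and by Theorem~\ref{T:HOM_FIBER}, $\pi_1(L)$ is a quotient of $\pi_1(F) \in \{\Z^2,\, \Z \oplus \Z_2,\, Q_8,\, \Z,\, 0\}$. If $\pi_1(F) = Q_8$, then $\pi_1(L)$ is finite and hence so are $\pi_1(B_\pm)$, a contradiction; so $\pi_1(L)$ must be a finitely generated abelian group. Seifert--van Kampen applied to $M = DB_- \cup_L DB_+$ (using $DB_\pm \simeq B_\pm$) gives $\pi_1(M) \cong \pi_1(B_-) \ast_{\pi_1(L)} \pi_1(B_+) = 0$, and, since each map $\pi_1(L) \to \pi_1(B_\pm)$ is surjective with kernel $K_\pm = \im(\pi_1(\sph^{\ell_\pm}) \to \pi_1(L))$, this forces $\pi_1(L) = K_- + K_+$. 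Each $K_\pm$ is trivial when $\ell_\pm \geq 2$ and cyclic when $\ell_\pm = 1$.

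To finish, write $\pi_1(L) = \Z^a \oplus T$ with $T$ finite and $a \leq 1$. If some $\ell_\pm \geq 2$ (say $\ell_+$, by symmetry), then $K_+ = 0$ gives $\pi_1(L) = K_-$, which is trivial or cyclic, and so $\pi_1(B_-) = \pi_1(L)/K_- = 0$, contradicting the assumption that $\pi_1(B_-)$ is infinite. Hence $\ell_- = \ell_+ = 1$. In this case, for each $\pi_1(B_\pm) = \pi_1(L)/K_\pm$ to be infinite, the image of $K_\pm$ in the free part $\Z^a$ must vanish, so $K_\pm \subseteq T$; but then $\pi_1(L) = K_- + K_+ \subseteq T$ is torsion, contradicting $a \geq 1$. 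The main obstacle I expect is establishing that $\pi_1(L)$ is abelian (so that ``normally generated'' upgrades to ``generated''), which is handled cleanly by invoking Theorem~\ref{T:HOM_FIBER}; once this is in place, the rank bound forces the contradiction case by case.
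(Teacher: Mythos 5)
Your proof is correct and follows the same overall architecture as the paper's: reduce to rank counting via the long exact sequences of the sphere bundles, and invoke the fact that the subgroups $K_\pm = \im(\pi_1(\sph^{\ell_\pm}) \to \pi_1(L))$ must together generate $\pi_1(L)$ when $\pi_1(M)=0$. The one genuine difference is how you obtain that generation fact: the paper simply cites equation (3.7) of Grove--Halperin, whereas you derive it from Seifert--van Kampen applied to $M = DB_- \cup_L DB_+$ together with the observation (via Table~\ref{table:Qtype}) that $\pi_1(L)$ is abelian once $Q_8$ is ruled out, so that normal generation upgrades to generation. This is a nice self-contained substitute for the citation. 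Your case split on $\ell_\pm$ is also slightly more elaborate than needed; the paper reaches the same contradiction more directly by noting that $\rank(\pi_1(B_\pm)) = \rank(\pi_1(L)) = 1$ forces each $K_\pm$ to have rank zero, hence to be finite, so $K_- + K_+$ cannot generate a rank-one group.

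One small caution in your backward direction: you write as if one could literally tensor the exact sequence $\pi_1(\sph^{\ell_-}) \to \pi_1(L) \to \pi_1(B_-) \to 0$ with $\Q$, but these groups need not be abelian, so $\pi_1^\Q(L)$ is not defined by tensoring and right-exactness of $-\otimes\Q$ is not directly applicable. The conclusion is nevertheless correct: $L$ is nilpotent (Theorem~\ref{T:HOM_FIBER}), so $\rank(\pi_1(L))$ is the sum of ranks of lower central quotients, and since $\im(\pi_1(\sph^{\ell_-}))$ is a cyclic normal subgroup of finite index, $\rank(\pi_1(L)) = \rank(\im(\pi_1(\sph^{\ell_-}))) \leq 1$. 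This is how the paper phrases it, and it avoids the abelian-tensoring shortcut.
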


% PROOF

\begin{proof}

Assume that $\rk(\pi_1(L)) \leq 1$ and suppose that $\pi_1(B_\pm)$ are both infinite.  The long exact homotopy sequences for the bundles $\sph^{\ell_\pm} \to L \to B_\pm$ then yield 
\[
1 \leq \rk(\pi_1(B_\pm)) \leq \rk(\pi_1(L)) \leq 1
\]
and, hence, 
that $\pi_1(L)$ and its quotients $\pi_1(B_\pm)$ are abelian of rank one.  Furthermore, the images of the homomorphisms $\pi_1(\sph^{\ell_\pm}) \to \pi_1(L)$ in the respective long exact sequences are both finite and, hence, cannot together generate $\pi_1(L)$.  Since $\pi_1(M) = 0$, this is a contradiction to equation (3.7) of \cite{GH}. %\cite[Prop.\ 1.8]{Ho}, 

Assume, on the other hand, that there is some $B \in \{B_\pm\}$ with $\pi_1(B)$ finite.  Then, since $\rk(\pi_1(\sph^{\ell_\pm})) \leq 1$, the long exact homotopy sequences for $\sph^{\ell_\pm} \to L \to B_\pm$ ensure that $\rk(\pi_1(L)) \leq 1$, as desired.
\end{proof}

% NEW FORMALITY STUFF

Recall that a topological space is said to be of \emph{finite type} if it is weakly homotopy equivalent to a CW-complex with finitely many $k$-cells for each $k$.  It turns out that, for a nilpotent space $X$, being of finite type is equivalent to the integral homology groups $H_j(X)$ being finitely generated for all $j \geq 1$, and to the homotopy groups $\pi_j(X)$ being finitely generated for all $j \geq 1$; see \cite[Theorem 4.5.2]{MP}. 

% LEM: Covers of nilpotent spaces are nilpotent.
\begin{lem}
\label{L:lift_nil}
Suppose $X$ is a nilpotent space of finite type and that $p: \ol X \to X$ is a covering map.  Then $\ol X$ is also a nilpotent space of finite type.
\end{lem}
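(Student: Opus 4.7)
The plan is to verify the two conditions separately using standard covering-space theory to identify the homotopy groups of $\ol X$ with those of $X$ (for degrees $\geq 2$) or with a subgroup of $\pi_1(X)$ (for degree one), and then invoke classical results from the theory of nilpotent groups.

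First I would record that, since $p: \ol X \to X$ is a covering map, the induced map $p_*: \pi_1(\ol X) \to \pi_1(X)$ is injective, while $p_*: \pi_k(\ol X) \to \pi_k(X)$ is an isomorphism for all $k \geq 2$. Identifying $\pi_1(\ol X)$ with its image $H := p_*(\pi_1(\ol X)) \In \pi_1(X)$, the standard formula for the action of $\pi_1(\ol X)$ on $\pi_k(\ol X)$ shows that this action corresponds under $p_*$ to the restriction to $H$ of the $\pi_1(X)$-action on $\pi_k(X)$.

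Next I would verify nilpotency of $\ol X$. Since subgroups of nilpotent groups are nilpotent, $H \cong \pi_1(\ol X)$ inherits nilpotency from $\pi_1(X)$. For $k \geq 2$, if the $\pi_1(X)$-action on $\pi_k(X)$ is nilpotent with filtration
\[
\pi_k(X) = H_0 \rnorm H_1 \rnorm \cdots \rnorm H_m = \{e\},
\]
then each $H_j$ is automatically closed under the restricted $H$-action, remains normal in $H_{j-1}$ with abelian quotient, and the induced $H$-action on $H_{j-1}/H_j$ is trivial because the $\pi_1(X)$-action already is. Hence the same filtration witnesses nilpotency of the $\pi_1(\ol X)$-action on $\pi_k(\ol X)$, so $\ol X$ is a nilpotent space.

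Finally, to establish finite type, I would apply the cited \cite[Theorem 4.5.2]{MP}: since $\ol X$ is now known to be nilpotent, it suffices to prove that $\pi_j(\ol X)$ is finitely generated for every $j \geq 1$. For $j \geq 2$ this is immediate from $\pi_j(\ol X) \cong \pi_j(X)$ together with the finite type of $X$. For $j = 1$, $\pi_1(\ol X) \cong H$ is a subgroup of the finitely generated nilpotent group $\pi_1(X)$; the classical fact that subgroups of finitely generated nilpotent groups are themselves finitely generated (a consequence of the corresponding property of polycyclic groups, obtained by induction on nilpotency class through the lower central series, whose successive quotients are finitely generated abelian) then yields that $\pi_1(\ol X)$ is finitely generated. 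The mildly subtle step, and the one I would most want to cite carefully, is this last point about finite generation of subgroups of finitely generated nilpotent groups; everything else is a routine translation between $X$ and $\ol X$ via the covering.
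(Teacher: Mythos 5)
Your proof is correct and follows essentially the same route as the paper's: identify $\pi_1(\ol X)$ with a subgroup of $\pi_1(X)$, note that subgroups of nilpotent groups are nilpotent, use the $\pi_1$-equivariance of $p_*$ to transport the nilpotent filtrations of the $\pi_k(X)$ to $\pi_k(\ol X)$, and then invoke \cite[Theorem 4.5.2]{MP}. One small difference worth noting: the paper only remarks in passing that $p_*$ is injective on $\pi_1$ and an isomorphism on higher $\pi_k$ and leaves the finite-generation of the homotopy groups of $\ol X$ implicit, whereas you explicitly flag (correctly) that for $j=1$ this requires the classical fact that subgroups of finitely generated nilpotent groups are themselves finitely generated. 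That is indeed the one non-obvious ingredient, and spelling it out is a genuine improvement over the paper's phrasing, not a different approach.
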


\begin{proof}
Recall that the map $p$ induces an injection on fundamental groups and an isomorphism on higher homotopy groups.  In particular, by the discussion immediately preceding the lemma, it thus suffices to show that $\ol X$ is a nilpotent space. As subgroups of nilpotent groups are nilpotent, the nilpotency of $\pi_1(X)$ ensures that $\pi_1(\ol X) \cong p_*(\pi_1(\ol X)) \In \pi_1(X)$ is nilpotent, while the isomorphisms $p_* : \pi_k(\ol X) \to \pi_k(X)$, $k \geq 2$, are, by definition, equivariant with respect to the action of $\pi_1(\ol X)$ (see \cite[pp.~341--342]{Ha2}); that is, 
$$
p_*(\gamma \cdot \vphi) = p_*(\gamma) \cdot p_*(\vphi)
$$
for all $\gamma \in \pi_1(\ol X)$ and all $\vphi \in \pi_k(\ol X)$.  It now follows easily from the nilpotency of the space $X$ that $\ol X$ is nilpotent.
\end{proof}

This lemma finds a useful application in the context of double disk bundles.  Indeed, the regular leaf, after taking an appropriate cover, behaves, up to homotopy, like a closed manifold of possibly lower dimension. 
Recall that the \emph{maximal free abelian cover} $\ol N$ of a closed, smooth, orientable manifold $N$ with first Betti number $b_1(N)$ is the total space of a principal $\Z^{b_1(N)}$-bundle over $N$.  Indeed, $\ol N$ is a smooth, orientable manifold, with finite fundamental group an extension of the torsion subgroup of $H_1(N)$ by the commutator subgroup of $\pi_1(N)$, and the group of deck transformations for the covering is $\Z^{b_1(N)}$.  For example, the maximal free abelian cover of a product $T^k \x N$ is $\R^k \x N$ whenever $\pi_1(N)$ is finite.

% PROP: Middle is nice

\begin{prop}
\label{P:PDmiddle}
Let $M$ be a smooth, closed, simply connected manifold which admits a double disk-bundle decomposition $DB_- \cup_L DB_+$ with $B_\pm$ connected.  Then the maximal free abelian cover $\ol L$ of $L$ is a rational Poincar\'e-duality nilpotent space of formal dimension $\dim(L) - b_1(L)$.
\end{prop}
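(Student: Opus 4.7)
The plan is to realise $\ol L$ as the rational homotopy fibre of an explicit map $L \to T^{b_1(L)}$ and then invoke a standard Poincar\'e-duality inheritance result from rational homotopy theory.

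Set $n = \dim L$ and $k = b_1(L)$. First I would observe that $L$ is closed (being the common boundary of the compact $DB_\pm$) and orientable (as the boundary of the oriented manifold $DB_-$), so that $L$ is a rational Poincar\'e-duality space of formal dimension $n$. By Theorem \ref{T:HOM_FIBER}, $L$ is nilpotent and $\pi_1(L)$ is a quotient of one of $0, \Z, \Z\oplus\Z_2, Q_8, \Z^2$; in each case $\rank(\pi_1(L)) = b_1(L) = k$. If $k = 0$ (which happens whenever $\pi_1(L) = Q_8$, since the abelianisation of $Q_8$ is pure torsion), then $\ol L = L$ and the statement reduces to ordinary Poincar\'e duality. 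Otherwise, $\pi_1(L)$ is abelian of the form $\Z^k \oplus T$ with $T$ finite.

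The surjection $\pi_1(L) \twoheadrightarrow \Z^k$ onto the maximal free-abelian quotient is classified by a map $L \to K(\Z^k, 1) \simeq T^k$, and the cover associated to its kernel is precisely $\ol L$. Converting the classifying map to a Serre fibration would then yield the fibration
\[
\ol L \lra L \lra T^k,
\]
and by Lemma \ref{L:lift_nil}, $\ol L$ is a nilpotent space of finite type.

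To close the argument, I would invoke the following standard fact from rational homotopy theory (see, for example, \cite{FHT}): in a fibration of nilpotent spaces of finite type, if the total space and base are rational Poincar\'e-duality spaces of formal dimensions $n_E$ and $n_B$, then so is the fibre, with formal dimension $n_E - n_B$. Equivalently, at the level of minimal models, the Koszul--Sullivan extension
\[
(\wedge V_L^1, 0) \hra (\wedge V_L, d_L) \twoheadrightarrow (\wedge V_L^{\geq 2}, \bar d_L)
\]
realising the above fibration has base $(\wedge V_L^1, 0)$ (the minimal model of $T^k$, a PD algebra of dimension $k$) and total space $(\wedge V_L, d_L)$ (the minimal model of $L$, a PD algebra of dimension $n$); hence the quotient $(\wedge V_L^{\geq 2}, \bar d_L)$, a model for $\ol L$, is itself a PD algebra of dimension $n - k = \dim L - b_1(L)$. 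The principal obstacle is verifying that the Poincar\'e-duality inheritance theorem applies in the nilpotent (rather than simply connected) setting; this reduces to checking that the monodromy action of $\pi_1(T^k) = \Z^k$ on $H^*(\ol L; \Q)$ is nilpotent, which follows directly from the nilpotency of $L$ granted by Theorem \ref{T:HOM_FIBER}.
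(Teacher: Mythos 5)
Your approach diverges from the paper's: you construct the Serre fibration $\ol L \to L \to T^{b_1(L)}$ and attempt to deduce Poincar\'e duality for the fibre from a fibre-inheritance theorem and the corresponding Koszul--Sullivan extension, whereas the paper works directly with the cover $\ol L \to L$ and invokes Milnor--Barge duality for free abelian covers, citing \cite{Barge}, \cite{Mi} and \cite[Theorem 5.2]{KS}. The shared ingredients are the nilpotency of $L$ (Theorem \ref{T:HOM_FIBER}) and Lemma \ref{L:lift_nil} to conclude that $\ol L$ is nilpotent of finite type; after that the arguments separate.

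The ``standard fact'' you invoke is, however, not in \cite{FHT} in the form stated, and as stated it is false: for the path--loop fibration $\Omega X \to PX \to X$ over a Poincar\'e-duality space $X$ of positive formal dimension, the base and total space are both (trivially or otherwise) rational PD spaces, yet $\Omega X$ is not. The missing hypothesis is that $H^*(F;\Q)$ be finite-dimensional, and one must also control the monodromy. In your situation finite-dimensionality does hold — $\ol L$ is a manifold of dimension $\dim L$ that is nilpotent of finite type, so its total rational cohomology is finite — but you never say this, and it is exactly the observation the paper's proof makes explicit before applying Milnor--Barge. With that hypothesis restored, the inheritance statement you want, in the non-simply-connected case of a torus base, \emph{is} Barge's theorem (equivalently the Gorenstein-space formalism of F\'elix--Halperin--Thomas applied to the KS extension), so your argument and the paper's are essentially the same result viewed from two sides. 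As written, though, you are invoking a misstated and mis-sourced theorem rather than citing the result that actually closes the argument; I'd either cite Barge/Kreck--Suciu directly (as the paper does) or spell out the finiteness and nilpotent-monodromy hypotheses and point to the Gorenstein-spaces literature for the fibre-inheritance step.
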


\begin{proof}
Recall from Theorem \ref{T:HOM_FIBER} that $L$ is a nilpotent space.  Now, being a closed, smooth, codimension-one submanifold of a closed, simply connected manifold, $L$ must also be orientable (by \cite[p.~107]{Hi}) with $H_j(L)$ finitely generated for all $j \geq 1$.  Therefore, by \cite[Theorem 4.5.2]{MP} and Lemma \ref{L:lift_nil}, the maximal free abelian cover $\ol L$ of $L$ is a smooth, orientable manifold which is nilpotent and of finite type.  In particular, $\dim_\Q H_*(\ol L; \Q)$ must be finite dimensional.  It now follows from Milnor-Barge duality (see \cite{Barge}, \cite{Mi} and also \cite[Theorem 5.2]{KS}) that there is an integral homology class $[\ol L] \in H_{\dim{L} - b_1(L)}(\ol L)$ such that the cap product
$$
\frown [\ol L] : H^j(\ol L; \Q) \to H_{\dim(L) - b_1(L) - j} (\ol L: \Q)
$$
is an isomorphism for all $j \geq 0$, as desired.
\end{proof}

Observe that, if $F$ is the homotopy fiber of the inclusion $L \to M$, it follows from Table \ref{table:Qtype}, the Hurewicz Theorem and the long exact homotopy sequence for the homotopy fibration $F \to L \to M$ that $b_1(L) = \rank( \pi_1(L) ) \leq 2$.  In other words, the maximal free abelian cover $\ol L$ of the regular leaf $L$ behaves on the level of rational cohomology like a closed, simply connected manifold with $\dim(L) - 2 \leq \dim(\ol L) \leq \dim(L)$.

Just as it is convenient to know that the maximal free abelian cover of the regular leaf $L$ is nilpotent, it is often useful to have a criterion ensuring that a singular leaf is a nilpotent space.

% LEM

\begin{lem}  
\label{L:nilp}
Suppose $\sph^1 \to L \xrightarrow{\pi} B$ is a circle bundle 
such that the induced homomorphism $\pi_1(\sph^1) \to \pi_1(L)$ is injective.  If $L$ is nilpotent, then so too is $B$.% is nilpotent.
\end{lem}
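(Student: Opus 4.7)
The plan is to extract the result directly from the long exact homotopy sequence of the fibration $\sph^1 \to L \xrightarrow{\pi} B$.  Since $\pi_k(\sph^1) = 0$ for all $k \geq 2$, the hypothesized injectivity of $\pi_1(\sph^1) \hookrightarrow \pi_1(L)$ collapses the relevant part of this sequence to isomorphisms $\pi_k(L) \xrightarrow{\cong} \pi_k(B)$ for every $k \geq 2$, together with a short exact sequence
$$0 \to \pi_1(\sph^1) \to \pi_1(L) \to \pi_1(B) \to 0.$$
In particular, $\pi_1(B)$ is a quotient of the nilpotent group $\pi_1(L)$, and is therefore itself nilpotent.

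It remains to verify that, for each $k \geq 2$, the action of $\pi_1(B)$ on $\pi_k(B)$ is nilpotent.  Beginning with a chain
$$\pi_k(L) = H_0 \rnorm H_1 \rnorm \cdots \rnorm H_m = \{0\}$$
witnessing nilpotency of the $\pi_1(L)$-action on $\pi_k(L)$, I would transport it across the isomorphism $\pi_k(L) \cong \pi_k(B)$ to obtain subgroups $H_j' \leq \pi_k(B)$, and then check that this chain witnesses nilpotency of the $\pi_1(B)$-action.  The essential input is the standard equivariance of the maps induced by $\pi$: for all $\delta \in \pi_1(L)$ and $\alpha \in \pi_k(L)$, the image of $\delta \cdot \alpha$ in $\pi_k(B)$ equals the image of $\delta$ acting on the image of $\alpha$.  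Combined with the surjectivity of $\pi_1(L) \to \pi_1(B)$ already obtained above, which allows every element of $\pi_1(B)$ to be lifted to $\pi_1(L)$, one reads off the $\pi_1(B)$-closedness of each $H_j'$ and the triviality of the induced action of $\pi_1(B)$ on the successive quotients $H_{j-1}'/H_j'$ directly from the corresponding properties of the chain $\{H_j\}$.

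The main step needing care is the verification of the equivariance formula for the fibration-induced maps on homotopy groups in degrees $k \geq 2$; this is a standard fact about fibrations, but is the point where I would focus the remaining detail, since once it is in hand the transfer of the filtration along the isomorphism $\pi_k(L) \cong \pi_k(B)$ is purely formal.
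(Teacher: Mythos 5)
Your proposal is correct and follows essentially the same route as the paper: derive surjectivity of $\pi_1(L)\to\pi_1(B)$ and isomorphisms $\pi_k(L)\cong\pi_k(B)$ for $k\geq 2$ from the long exact sequence, push forward the filtration witnessing nilpotency of the $\pi_1(L)$-action, and use equivariance of $\pi_*$ with respect to the $\pi_1$-actions. The equivariance you flag as the key technical point is exactly what the paper invokes, citing Hatcher (pp.~341--342), so the argument is complete as outlined.
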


% PROOF

\begin{proof}
From the long exact homotopy sequece for the bundle, it is clear that the homomorphism $\pi_* : \pi_1(L) \to \pi_1(B)$ is surjective.  Being the image of a nilpotent group under a homomorphism, it follows that $\pi_1(B)$ is nilpotent.

Since $\pi_1(\sph^1)$ injects into $\pi_1(L)$, 
the long exact homotopy sequence 
yields isomorphisms $\pi_*:\pi_k(L)\rightarrow \pi_k(B)$ for all $k \geq 2$.  

For $k \geq 2$, let 
$$
\pi_k(L) = G^k_0 \rnorm G^k_1 \rnorm \cdots \rnorm G^k_m = \{1\}
$$
be the chain of subgroups associated to the abelian group $\pi_k(L)$ which witness the nilpotency of the action of $\pi_1(L)$.  Define subgroups $H^k_j = \pi_*(G^k_j) \In \pi_k(B)$ for $j \in \{0, \dots, m\}$.  As $\pi_*$ is an isomorphism, it is clear that, for each $j \in \{1, \dots, m\}$, $H_j$ is normal in $H_{j-1}$; that is, 
$$
\pi_k(B) = H^k_0 \rnorm H^k_1 \rnorm \cdots \rnorm H^k_m = \{1\}\,.
$$

It remains to show that the action of $\pi_1(B) = \pi_*(\pi_1(L))$ preserves each $H_j$ and induces a trivial action on each $H_{j-1}/H_j$.  By definition (see, for example, \cite[p.~341--342]{Ha2}), it is clear that $\pi_*:\pi_k(L)\rightarrow \pi_k(B)$ is equivariant with respect to the action of $\pi_1(L)$; that is, 
$$
\pi_*(\gamma \cdot \vphi) = \pi_*(\gamma) \cdot \pi_*(\vphi)
$$
for all $\gamma \in \pi_1(L)$ and all $\vphi \in \pi_k(L)$.  Since $\pi_1(B) = \pi_*(\pi_1(L))$, it now clearly follows that $H_j$ is closed under the action of $\pi_1(B)$ for all $j \in \{0, \dots, m\}$.

Finally, if $\gamma \in \pi_1(L)$ and $\vphi \in G_{j-1}$, it follows from the nilpotency of the $\pi_1(L)$ action that 
$$
\pi_*(\gamma) \cdot \pi_*(\vphi) = \pi_*(\gamma \cdot \vphi) \in \pi_*(\vphi \cdot G_j) = \pi_*(\vphi) H_j
$$
and, hence, that $\pi_1(B)$ acts trivially on each $H_{j-1}/H_j$.
\end{proof}

% FURTHER AUXILIARY RESULTS

Recall now that the \emph{Lusternik--Schnirelmann category} $\cat(Y)$ of a topological  space $Y$ is defined to be the least integer $m \in \N$ such that $Y$ is the union of $m + 1$ open sets, each of which is contractible in $Y$.  The \emph{rational Lusternik--Schnirelmann category} $\cat_0(Y)$ of $Y$, on the other hand, is defined to be the minimal $\cat(Z)$ among all $Z$ which are rationally homotopy equivalent to $Y$.

The following theorem will be an important tool in the remainder of the paper.  Although our interest is restricted to the manifold case, the statement remains true in the setting of double mapping cylinders (see \cite{GH}) as long as $\cat_0(L)$ is finite.  In this way, it is a generalization of Lemma 6.3 of \cite{GH}.  See \cite[Prop.\ 2.7]{DVK} for another related statement under different hypotheses.

By Table \ref{table:Qtype}, %(a finite cover of) 
the homotopy fiber $F$ of the inclusion map $L\to M$ always has a loop-space factor of the form $\Omega \sph^k$ for some $k \in \N$.  Denote by $s$ the degree of the unique non-trivial rational homotopy group of $\Omega \sph^k$ of even degree.  That is, $s = k-1$, if $k$ is odd, and $s = 2(k-1)$, if $k$ is even.

% THM

\begin{thm}
\label{T:NoRank}
Suppose that $M$ is a smooth, closed, simply connected manifold which admits a double disk-bundle decomposition $DB_- \cup_L DB_+$ with $B_\pm$ connected. Then, in the long exact sequence of rational homotopy groups associated to the homotopy fibration $F\to L \to M$, the connecting homomorphism $\partial\colon\pi_{s+1}^\Q(M)\to \pi_s^\Q(F)$ is non-trivial.
\end{thm}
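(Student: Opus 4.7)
The plan is to argue by contradiction. Suppose that $\partial \equiv 0$ in the long exact sequence
\[
\cdots \to \pi_{s+1}^\Q(M) \xrightarrow{\partial} \pi_s^\Q(F) \xrightarrow{i_*} \pi_s^\Q(L) \to \pi_s^\Q(M) \to \cdots
\]
associated to the homotopy fibration $F \to L \to M$. Then $i_*$ is injective in degree $s$, so in particular the distinguished even-degree class $x \in \pi_s^\Q(F)$ coming from the $\Omega \sph^k$-factor identified in Theorem \ref{T:HOM_FIBER} descends to a non-zero class $\bar x \in \pi_s^\Q(L)$. Since $s$ is the even degree of a generator of the loop-space factor, $\bar x$ is represented by an indecomposable generator in the minimal Sullivan model of $L$.

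The next step is to pass to the maximal free abelian cover $\bar L$ of $L$. By Lemma \ref{L:lift_nil}, $\bar L$ is a nilpotent space of finite type, and by the very construction of the maximal free abelian cover its fundamental group is finite. Hence $\pi_j^\Q(\bar L) \cong \pi_j^\Q(L)$ for all $j \geq 2$, so the class $\bar x$ pulls back to a non-zero element of $\pi_s^\Q(\bar L)$, which produces an indecomposable generator $v$ of even degree $s$ in the minimal Sullivan model $(\wedge V_{\bar L}, d_{\bar L})$. Crucially, by Proposition \ref{P:PDmiddle}, $\bar L$ is a rational Poincaré-duality nilpotent space of formal dimension $n := \dim L - b_1(L) \leq \dim L$, so $\dim_\Q H^*(\bar L; \Q)$ is finite and concentrated in degrees $\leq n$.

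With this setup, I aim to contradict the existence of $v$ by a dimension/degree accounting. Since $v$ is indecomposable of even degree and $d_{\bar L}$ is decomposable, the powers $v, v^2, v^3, \dots$ would generate an infinite-dimensional polynomial subalgebra unless truncated by the differential; truncation in turn requires an odd-degree generator $w$ with $d_{\bar L}(w) = v^N + \cdots$ for some $N \geq 1$, forcing $\deg(w) = Ns - 1 \leq n$. This gives an upper bound $s \leq n$ (and finer bounds via Poincaré duality applied to the pairing of $v^{N-1}$ with $v$). These inequalities are then to be contradicted case by case against Table \ref{table:Qtype} together with the relations $\dim L = \dim B_\pm + \ell_\pm$, the codimension lower bound $\codim B_\pm \geq 2$ from Proposition \ref{P:exceptional}, and the bound $b_1(L) \leq 2$ noted immediately after Proposition \ref{P:PDmiddle}.

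The principal obstacle will be executing the concluding numerical step uniformly across all entries of Table \ref{table:Qtype}. The degree $s$ ranges from $2$ up to $24$, and in the rows with $\pi_1(F) = 0$ and $\alpha = \beta$ the loop-space exponent is comparatively large while the allowed formal dimension of $\bar L$ need not be small, so the polynomial-truncation argument alone gives only a borderline inequality. I expect to tighten this by exploiting the further sphere-bundle data $\sph^{\ell_\pm} \to L \to B_\pm$, which forces additional odd-degree generators in the minimal model of $\bar L$ and hence inflates the formal-dimension estimate enough to contradict the bound $n \leq \dim L$.
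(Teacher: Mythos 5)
Your proposal is built on a step that does not hold, and the remaining accounting would not yield a contradiction even if it did. You take the even-degree class $\bar{x} \in \pi_s^\Q(L)$ and declare that, after passing to $\ol L$, it gives an indecomposable generator $v$ whose powers force infinite-dimensional cohomology ``unless truncated.'' But for the powers $v^m$ to be cohomology classes at all, $v$ must be a cocycle in the minimal model of $\ol L$, and there is no reason for that: $\ol L$ can (and in many rows of Table \ref{table:Qtype} will) have rational homotopy in degrees strictly below $s$, so $d_{\ol L}(v)$ lies in $\wedge^{\geq 2}V$ in degree $s+1$ and need not vanish. The paper sidesteps exactly this by replacing $F$ with the $(s-1)$-connected Whitehead cover $W_{s-1}$, where degree constraints force $d$ to vanish on $V^s$ automatically. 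Moreover, even granting $dv = 0$, the bound you extract from truncation (an odd generator $w$ with $\deg w = Ns - 1 \leq n_{\ol L}$) is satisfied by virtually every rationally elliptic Poincar\'e-duality space — this is exactly the structure of $\CP^k$ — so it produces no contradiction. You acknowledge this when you call the concluding step ``the principal obstacle,'' but without an additional structural input the degree bookkeeping cannot close.

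What is genuinely missing is a \emph{global} monotone invariant that distinguishes the infinite-dimensional rational homotopy of $F$ from the finite-dimensional $L$. The paper uses rational Lusternik--Schnirelmann category: the Whitehead cover $W_{s-1}$ has infinite cup-length (hence $\cat_0(W_{s-1}) = \infty$), while $\cat_0(L) \leq \cat(L) \leq \dim L - 1 < \infty$; and the Mapping Theorem (plus F\'elix--Halperin's Theorem II for the cases where one odd-degree class dies) transfers the bound across the map $W_{s-1} \to L$, which is injective on even rational homotopy precisely because $\partial$ is assumed trivial. Your passage to $\ol L$ via Proposition \ref{P:PDmiddle} is a reasonable instinct but is orthogonal to the actual obstruction, and the sphere-bundle data you hope to exploit will not obviously recover the strength of the $\cat_0$ argument. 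I would recommend reworking the proof around $\cat_0$ and the Whitehead tower rather than degree bounds in the minimal model of $\ol L$.
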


% PROOF

\begin{proof}
Suppose that the homomorphism $\partial$ is trivial.  In particular, it then follows from the long exact sequence that the map $\pi_s^\Q(F) \to \pi_s^\Q(L)$ is injective.  

Consider now the space $W_{s-1}$ in the Whitehead tower $\cdots \to W_{2} \to W_{1} \to W_{0} \to F$ 
associated to $F$, that is, an $(s-1)$-connected space such that the map $W_{s-1} \to F$ induces an isomorphism $\pi_j(W_{s-1}) \to \pi_j(F)$ for every $j \geq s$.  Therefore, by Table \ref{table:Qtype}, there are three possible configurations of non-trivial rational homotopy groups for $W_{s-1}$.  First, if $\ell_\pm$ are even with $\ell_- = \ell_+$ and $F \simeq_\Q \sph^{\ell_-} \x \Omega \sph^{\ell_- + 1}$, then $\pi_1(F) = 0$ and
\[
\pi_j^\Q(W_{s-1}) = 
\begin{cases}
\Q^2, & j = s = \ell_-,\\
\Q, &j = 2\ell_- - 1.
\end{cases}
\]
Second, if $\ell_\pm$ are even and $\ell_- \neq \ell_+$, then $F \simeq_\Q \sph^{\ell_-} \x \sph^{\ell_+} \x \Omega \sph^{\ell_-  + \ell_+ + 1}$, $\pi_1(F) = 0$ and
\[
\pi_j^\Q(W_{s-1}) = 
\begin{cases}
\Q, & j = s = \ell_- + \ell_+,\\
\Q, &j = 2\max\{\ell_\pm\} - 1 > s.
\end{cases}
\]
In all other cases, the only non-trivial rational homotopy group is $\pi_s^\Q(W_{s-1}) = \Q$.

In all three scenarios, by computing the minimal model it becomes clear that there is an element $x \in H^s(W_{s-1}; \Q) \neq 0$ such that $x^m \neq 0$ for all $m \in \N$.  In particular, this implies that $W_{s-1}$ has cup-length $\mathrm{cup}(W_{s-1}) = \infty$.  Moreover, from Propositions 27.14 and 28.1 of \cite{FHT} it now follows that
\[
\cat_0(W_{s-1}) \geq \mathrm{cup}(W_{s-1}) = \infty.
\]

On the other hand, the composition $W_{s-1} \to F \to L$ induces (by assumption) an injection $\pi_\mathrm{even}^\Q(W_{s-1}) \to \pi_\mathrm{even}^\Q(L)$, while the kernel of $\pi_\mathrm{odd}^\Q(W_{s-1}) \to \pi_\mathrm{odd}^\Q(L)$ has dimension $\kappa \in \{0, 1\}$.  By Theorem \ref{T:HOM_FIBER}, $L$ (and $F$) is nilpotent.  Therefore, the Mapping Theorem \cite[Theorem 2.81]{FOT} yields $\cat_0(W_{s-1}) \leq \cat_0(L)$ whenever $\kappa = 0$.  By combining Propositions 27.2 and 27.5 and Lemma 28.2 of \cite{FHT}, it may thus be concluded in the case $\kappa = 0$ that 
\[
\infty = \cat_0(W_{s-1}) \leq \cat_0(L) \leq  \cat(L) \leq n-1 < \infty,
\]
a contradiction.

Suppose, therefore, that $\kappa = 1$, hence, that $\pi_\mathrm{odd}^\Q(W_{s-1}) \neq 0$.  Observe first that the Mapping Theorem applied to the map $W_{s-1} \to F$ yields
\[
\infty = \cat_0(W_{s-1}) \leq \cat_0(F).
\]

In the case that $\ell_\pm$ are even with $\ell_- = \ell_+$ and $F \simeq_\Q \sph^{\ell_-} \x \Omega \sph^{\ell_- + 1}$, the unique non-trivial rational homotopy group in even degrees is $\pi_s^\Q(F) = \Q^2$, which, by assumption, injects into $\pi_s^\Q(L)$.  Therefore, $\pi_\mathrm{even}^\Q(F) \to \pi_\mathrm{even}^\Q(L)$ is injective, while the kernel of 
\[
\pi_\mathrm{odd}^\Q(F) = \pi_{2\ell_- - 1}^\Q(F) = \Q \to \pi_\mathrm{odd}^\Q(L)
\] 
is $1$-dimensional (since $\kappa = 1$).  Hence, Theorem II of \cite{FH2} implies that
\[
\infty = \cat_0(F) \leq \cat_0(L) + 1 \leq n < \infty,
\]
again a contradiction.

Finally, suppose that ($\kappa = 1$ and) $\ell_\pm$ are even with $\ell_- \neq \ell_+$, that is, $F \simeq_\Q \sph^{\ell_-} \x \sph^{\ell_+} \x \Omega \sph^{\ell_-  + \ell_+ + 1}$.  In particular, $F$ has exactly five non-trivial rational homotopy groups, occurring in degrees 
\[
\min\{\ell_\pm\} < \max\{\ell_\pm\}, \ 2\min\{\ell_\pm\} - 1 < s = \ell_- + \ell_+ < 2 \max\{\ell_\pm\} - 1
\]
and each of rank $1$.  Therefore, the kernel of $\pi_\mathrm{odd}^\Q(F) = \Q^2 \to \pi_\mathrm{odd}^\Q(L)$ has dimension $\in \{1,2\}$ (since $\kappa = 1$).  Moreover, as mentioned above, $F$ and, hence, $L$ are simply connected.  If $\pi_\mathrm{even}^\Q(F) \to \pi_\mathrm{even}^\Q(L)$ is injective, then Theorem II of \cite{FH2} yields a contradiction
\[
\infty = \cat_0(F) \leq \cat_0(L) + 1 \leq n + 1< \infty
\]
as before.  Therefore, given the assumption that $\pi_s^\Q(F) \to \pi_s^\Q(L)$ is injective, it remains only to show that $\pi_{\ell_\pm}^\Q(F) = \Q \to \pi_{\ell_\pm}^\Q(L)$ are injective.  It clearly suffices to show that these homomorphisms are non-trivial.

To this end, observe that the inclusion $L \to M$ factors through the disk bundles $DB_\pm$, and that the inclusions $L \to DB_\pm$ and $DB_\pm \to M$ are homotopic to the corresponding sphere-bundle projection maps $L \to B_\pm$ and inclusions $B_\pm \to M$ respectively.  Therefore, the homomorphism $\pi_{\ell_\pm}^\Q(L) \to \pi_{\ell_\pm}^\Q(M)$ decomposes as a composition $\pi_{\ell_\pm}^\Q(L) \to \pi_{\ell_\pm}^\Q(B_\pm) \to \pi_{\ell_\pm}^\Q(M)$.

Now, if $\pi_{\ell_\pm}^\Q(F) = \Q \to \pi_{\ell_\pm}^\Q(L)$ is trivial, then the long exact sequence for $F \to L \to M$ implies that $\pi_{\ell_\pm}^\Q(L) \to \pi_{\ell_\pm}^\Q(M)$ is injective, which, by the above observations, further implies that $\pi_{\ell_\pm}^\Q(L) \to \pi_{\ell_\pm}^\Q(B_\pm)$ is injective.  However, since $\cat_0(\sph^{\ell_\pm}) = 2$, Theorem II of \cite{FH2} applied to the sphere bundle $\sph^{\ell_\pm} \to L \to B_\pm$ reveals that $\pi_{\ell_\pm}^\Q(\sph^{\ell_\pm}) = \Q \to \pi_{\ell_\pm}^\Q(L)$ must be injective and, hence, by exactness, that $\ker(\pi_{\ell_\pm}^\Q(L) \to \pi_{\ell_\pm}^\Q(B_\pm)) \neq 0$, a contradiction.  Thus, the homomorphisms $\pi_{\ell_\pm}^\Q(F) = \Q \to \pi_{\ell_\pm}^\Q(L)$ must be non-trivial, as desired.
\end{proof}

As mentioned in the introduction, it was shown in \cite{GKS1} that every homotopy $7$-sphere admits a metric of non-negative curvature.  Crucial to establishing this fact was the observation that every homotopy $7$-sphere admits a double disk-bundle decomposition with $\ell_\pm = 1$.  The following corollary shows that a similar strategy to obtain non-negative curvature on exotic spheres in higher dimensions will not work.

% COROLLARY

\begin{cor}
\label{C:spheres}
Suppose $M$ is a homotopy sphere.  Then $M$ admits a double disk-bundle decomposition $DB_- \cup_L DB_+$ with $B_\pm$ connected and $\codim(B_\pm) = 2$ if and only if it is either diffeomorphic to $\sph^n$, $n \in \{2,3,4,5\}$, or homeomorphic to $\sph^7$.
\end{cor}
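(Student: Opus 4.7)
The plan is to handle the two directions of the equivalence separately. For the forward direction, suppose $M^n$ is a homotopy $n$-sphere admitting a double disk-bundle decomposition $DB_- \cup_L DB_+$ with $\codim(B_\pm) = 2$, so that $\ell_\pm = 1$. The homotopy fiber $F$ of the inclusion $L \to M$ then falls into one of the top three rows of Table \ref{table:Qtype}, yielding a loop-space factor $\Omega \sph^k$ with $k \in \{3, 5, 7\}$ and hence $s \in \{2, 4, 6\}$, respectively. By Theorem \ref{T:NoRank}, the connecting homomorphism $\partial \colon \pi_{s+1}^\Q(M) \to \pi_s^\Q(F)$ is non-trivial, so in particular $\pi_{s+1}^\Q(M) \neq 0$. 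Since $M$ is a homotopy $n$-sphere, its rational homotopy groups agree with those of $\sph^n$, which are non-trivial only in degree $n$ when $n$ is odd, or in degrees $n$ and $2n-1$ when $n$ is even. Matching $s+1$ with an allowed degree in each case forces $n \in \{2,3\}$, $\{3,5\}$, or $\{4,7\}$ in the three cases, so overall $n \in \{2,3,4,5,7\}$.

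Having pinned down the possible dimensions, I would identify $M$ up to diffeomorphism or homeomorphism as follows. For $n \in \{2,3\}$ the Poincar\'e conjecture (Perelman in dimension three) gives $M \cong \sph^n$. For $n = 4$, any homotopy sphere has $b_2 = 0$; among the simply connected $4$-manifolds with a double disk-bundle decomposition classified in \cite{GeRa}, the only one with $b_2 = 0$ is $\sph^4$, forcing $M \cong \sph^4$. For $n = 5$, Theorem \ref{T:DIFFEO_CLASSIF} lists four possibilities, of which only $\sph^5$ is a homotopy sphere, giving $M \cong \sph^5$. Finally, for $n = 7$, the topological Poincar\'e conjecture gives $M$ homeomorphic to $\sph^7$, matching the stated conclusion and accommodating the exotic smooth structures that may occur.

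For the converse, each candidate must be exhibited with a decomposition of the required type. The standard splitting $\sph^2 = D^2 \cup D^2$ and the genus-one Heegaard splitting of $\sph^3$ cover the first two cases. For $\sph^4$, the cohomogeneity-one $\SO(3)$-action via the $5$-dimensional spin-$2$ representation has two singular orbits diffeomorphic to $\RP^2$ of codimension two, yielding the classical Veronese decomposition through Proposition \ref{P:SuffCond}\eqref{i:cohom1}. For $\sph^5$, a suitable cohomogeneity-one or biquotient construction provides the required decomposition with both singular leaves of codimension two. For $n = 7$, every homotopy $7$-sphere admits a cohomogeneity-one action whose singular orbits have codimension two, as follows from the constructions underlying \cite{GKS1}, and Proposition \ref{P:SuffCond}\eqref{i:cohom1} then yields the desired decomposition.

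The main obstacle is the forward direction, specifically the coordinated application of Theorem \ref{T:NoRank} together with the structural data from Table \ref{table:Qtype} to determine $s$ in each case and compare with $\pi_*^\Q(\sph^n)$. Once this restricts $n$ to $\{2,3,4,5,7\}$, the forward identification reduces to invoking Poincar\'e-type and earlier classification results, while the converse follows from standard and cohomogeneity-one constructions.
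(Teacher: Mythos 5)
Your forward direction is essentially the paper's argument: apply Theorem \ref{T:NoRank} together with Table \ref{table:Qtype} to force $s \in \{2,4,6\}$ and hence a non-trivial rational homotopy group in one of degrees $3$, $5$, $7$, which when matched against $\pi_*^\Q(\sph^n)$ restricts $n$ to $\{2,3,4,5,7\}$. Your identifications in each dimension take slightly different routes than the paper --- you deduce $M \cong \sph^4$ from the classification of Ge--Radeschi \cite{GeRa} rather than citing Ge--Tang \cite{GT}, and $M \cong \sph^5$ from Theorem \ref{T:DIFFEO_CLASSIF} rather than the $5$-dimensional smooth Poincar\'e conjecture --- but both substitutions are valid and stay within the paper's own machinery.

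There is, however, a genuine error in your converse for $n = 7$. You assert that ``every homotopy $7$-sphere admits a cohomogeneity-one action whose singular orbits have codimension two,'' and then invoke Proposition \ref{P:SuffCond}\eqref{i:cohom1}. This is false: the Grove--Ziller construction realizes only a subset of the exotic $7$-spheres via cohomogeneity-one actions, and it is well known (this was precisely the obstacle that \cite{GKS1} overcame) that not all homotopy $7$-spheres admit cohomogeneity-one actions. What \cite{GKS1} actually provides is a double disk-bundle decomposition with $\ell_\pm = 1$ arising as the quotient of a cohomogeneity-one manifold by a free subaction --- that is, via Proposition \ref{P:SuffCond}\eqref{i:BiqFol}, not \eqref{i:cohom1}. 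Your conclusion stands, but the stated mechanism does not. Relatedly, your $\sph^5$ case only gestures at ``a suitable cohomogeneity-one or biquotient construction''; the paper points to a concrete cohomogeneity-one action with codimension-two singular orbits in \cite{GVWZ}, and you should either name such an action explicitly or cite a source for it, since the existence of one with \emph{both} singular orbits of codimension two is not automatic.
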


% PROOF

\begin{proof}
If $M$ admits a double disk-bundle decomposition $DB_- \cup_L DB_+$ with $B_\pm$ connected and $\codim(B_\pm) = 2$, that is, with $\ell_\pm = 1$, then Table \ref{table:Qtype} implies that the corresponding homotopy fiber $F$ of the inclusion $L \to M$ has a factor $\Omega \sph^k$ with $k \in \{3,5,7\}$.  In the notation above, it follows that $s \in \{2,4,6\}$ and, by Theorem \ref{T:NoRank}, that $M$ has a homotopy group of positive rank in one of degrees $3$, $5$ or $7$.  As $M$ is a homotopy sphere, it must therefore be of dimension $n \in \{2, 3, 4, 5, 7\}$.   If $2 \leq n \leq 5$,  then $M$ must be diffeomorphic to $\sph^n$.  Indeed, if $n \in \{2,3,5\}$, this follows from the corresponding Smooth Poincar\'e Conjecture, while for $n = 4$, it was established in \cite{GT}.    

On the other hand, the standard actions of $\sph^1$ on $\sph^2$ and $T^2$  on $\sph^3$ are of cohomogeneity one and have 
codimension-two singular orbits.  Similar actions on $\sph^4$ and $\sph^5$ can be found in \cite{Par} and \cite{GVWZ}, respectively.  Finally, 
if $n = 7$, the construction in \cite{GKS1} ensures that every homotopy $7$-sphere admits a double disk-bundle decomposition with $\ell_\pm = 1$.
\end{proof}

Observe in Corollary \ref{C:spheres} that one still obtains the restriction $\dim(M) \in \{2,3,4,5,7\}$ under the weaker hypothesis that $M$ is only a simply connected \emph{rational} homotopy sphere; see \cite{Fa} for this and related observations.  However, in this case much remains  
unknown about which  
such $M$ admit a double disk-bundle decomposition with $\ell_\pm = 1$.  Indeed, although it was demonstrated in \cite{GKS1} that a large family of $2$-connected, rational $7$-spheres admit such a structure, it was subsequently shown in \cite{GKS2} that this family does not contain all possible homotopy types of such manifolds: for example, the family does not contain any $2$-connected $7$-manifold $M^7$ with $H^4(M^7) = \Z_5$ and non-standard linking form.  
In a forthcoming work, the authors will detail general obstructions to the existence of any double disk-bundle decomposition for highly connected, rational homology spheres, as well as for more general spaces.

%-----------------------------------------------------------------------------------------------------
%		SECTION: DOUBLE DISK BUNDLES IN DIMENSION 5
%-----------------------------------------------------------------------------------------------------

\section{Double disk bundles in dimension at most five}
\label{S:dim5}

As discussed in the introduction, the smooth classification of simply connected manifolds of dimension at most four admitting a double disk-bundle decomposition is well known.  Using this, the case of manifolds of dimension at most five in Theorem \ref{T:thmA} is then a simple consequence of the preliminary results obtained in Section \ref{S:top}.

% COR: THEOREM A

\begin{thm}
\label{T:Pf_Thm_A}
	Let $M$ be a smooth, closed, simply connected manifold which admits a double disk-bundle decomposition.  If $\dim(M)\leq 5$, then $M$ is rationally elliptic.  If $\dim(M) \leq 4$, then $M$ is diffeomorphic to one of $\sph^2$, $\sph^3$, $\sph^4$, $\CP^2$, $\sph^2 \x \sph^2$ or $ \CP^2 \# \pm \CP^2$.
\end{thm}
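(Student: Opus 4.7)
The plan is to assemble the topological restrictions that have been developed in Section \ref{S:top}. First, by Proposition \ref{P:connected}, there is no loss in assuming that the double disk-bundle decomposition $M = DB_- \cup_L DB_+$ has both singular leaves $B_\pm$ connected. The classification assertion for $\dim(M) \leq 4$ does not require any new argument: for $\dim(M) = 2$ and $\dim(M) = 3$ the only smooth, closed, simply connected manifolds are $\sph^2$ and $\sph^3$, respectively, while the list in dimension four is exactly the classification of Ge--Radeschi already recalled in the introduction (\cite{GeRa}). Each of these spaces is in particular rationally elliptic, so the ellipticity statement holds trivially whenever $\dim(M) \leq 4$.

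It therefore remains to treat the case $\dim(M) = 5$, and here the strategy is to bound the dimension of a singular leaf and invoke Proposition \ref{P:L_BOUND}. Since $M$ is simply connected, Proposition \ref{P:exceptional} forces $\codim(B_\pm) \geq 2$, so $\dim(B_\pm) \leq 3$. Taking $B \in \{B_\pm\}$, the hypothesis $\dim(B) \leq 3$ of Proposition \ref{P:L_BOUND} is met, and one immediately concludes that $M^5$ is rationally elliptic.

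In outline, the argument is a direct deduction from the structural results of Section \ref{S:top}, and the only step which has any real content is the application of Proposition \ref{P:L_BOUND}; its proof, in turn, relies on analyzing the possibilities for $\pi_1(L)$ via Theorem \ref{T:HOM_FIBER} and on the low-dimensional topology of manifolds with abelian or quaternionic fundamental group. Once the bound $\dim(B) \leq 3$ has been secured, the conclusion is automatic, so there is no genuine obstacle in the five-dimensional case; the main subtlety was already absorbed into the preliminary results.
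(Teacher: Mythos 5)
Your proof is correct and essentially identical to the paper's: both deduce $\dim(B_\pm)\leq 3$ from Propositions \ref{P:connected} and \ref{P:exceptional} and then invoke Proposition \ref{P:L_BOUND} for ellipticity, and both cite \cite{GeRa} (together with the low-dimensional Poincar\'e conjectures) for the diffeomorphism classification in dimensions $\leq 4$. The only cosmetic difference is that the paper runs the codimension argument uniformly over all $\dim(M)\leq 5$, while you split off $\dim(M)\leq 4$ and deduce ellipticity there from the explicit classification; both routes are sound.
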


% PROOF

\begin{proof}
Since $\dim(M) \leq 5$, Propositions \ref{P:connected} and \ref{P:exceptional} imply that $M$ admits a decomposition $DB_-\cup_L DB_+$ with $B_\pm$ connected and $\dim(B_\pm) \leq 3$.  By Proposition \ref{P:L_BOUND}, it now follows that $M$ is rationally elliptic.

In dimensions $2$ and $3$, a smooth, closed, simply connected manifold must be diffeomorphic to a sphere, whereas the $4$-dimensional statement was proven by Ge and Radeschi \cite{GeRa}.
\end{proof}

In order to prove Theorem \ref{T:DIFFEO_CLASSIF}, assume for the remainder of this section that $M^5$ is a smooth, closed, simply connected $5$-manifold which admits a double disk-bundle decomposition $DB_-\cup_L DB_+$ with $B_\pm$ connected.  
It was already shown in Theorem \ref{T:Pf_Thm_A} that such an $M^5$ must be rationally elliptic.  Recall that, according to Pavlov \cite{Pav}, a five-dimensional, rationally elliptic manifold is rationally homotopy equivalent to either $\sph^5$ or $\sph^3\x \sph^2$. The following lemma will be helpful later.

% LEMMA

\begin{lemma}
\label{L:COH_ISO}
Under the identifications \[
	H^\ast(\sph^2\times\sph^2)	 \cong \Z[x,y]/\{x^2 = y^2 = 0\} \ \text{ and } \
	H^\ast(\CP^2\# \overline{\CP}^2) \cong \Z[u,v]/\{u^2 + v^2 = uv = 0\},
\]
every automorphism of $H^\ast(\sph^2\times\sph^2)$ maps $\{\pm x,\pm y\}$ to itself,
while 
every automorphism of $H^\ast(\CP^2\# \overline{\CP}^2)$ maps $\{\pm(u+v), \pm(u-v)\}$ to itself.
\end{lemma}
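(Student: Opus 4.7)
The plan is to reduce each statement to an analysis of $\phi|_{H^2}$. Since both cohomology rings are generated in degree two, every graded ring automorphism $\phi$ is determined by its restriction to $H^2 \cong \Z^2$, a $\Z$-lattice automorphism. The key idea is to characterize each prescribed four-element set intrinsically as the set of primitive lattice vectors in $H^2$ that square to zero in $H^*$, since both conditions (being primitive, and squaring to zero) are preserved by $\phi$.

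First I would identify the set $Z \In H^2$ of elements $w$ with $w^2 = 0$. Writing $w = \alpha x + \beta y$ in the $\sph^2\times\sph^2$ case yields $w^2 = 2\alpha\beta \, xy$, so $Z = \Z x \cup \Z y$. Writing $w = \alpha u + \beta v$ in the $\CP^2\#\overline{\CP}^2$ case and using $uv = 0$ together with $v^2 = -u^2$ gives $w^2 = (\alpha^2 - \beta^2)u^2$, so $Z = \Z(u+v)\cup \Z(u-v)$. An element of the form $kx$, $ky$, $k(u+v)$, or $k(u-v)$ has coordinates (in the respective integral basis $\{x,y\}$ or $\{u,v\}$) whose gcd equals $|k|$, so it is primitive in the lattice $H^2$ precisely when $|k| = 1$. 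Hence the primitive lattice vectors inside $Z$ are exactly $\{\pm x,\pm y\}$ and $\{\pm(u+v),\pm(u-v)\}$, respectively.

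Finally, I would observe that a ring automorphism $\phi$ sends $Z$ to $Z$, since $\phi(w)^2 = \phi(w^2)$, and that $\phi|_{H^2} \in GL_2(\Z)$ preserves the set of primitive vectors in $H^2$ (if $\phi(w) = n v'$ with $|n|>1$, then $w = n \phi^{-1}(v')$ would fail to be primitive, a contradiction). Combining these two invariances, $\phi$ restricts to a self-map of the finite set of primitive square-zero vectors, which must then be a bijection; this is the claimed conclusion.

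Nothing in the argument is genuinely difficult; the one point worth flagging is the role of the sign in $v^2 = -u^2$ (reflecting the orientation reversal in $\overline{\CP}^2$), which is what produces the factorization $\alpha^2 - \beta^2$ and hence the nontrivial square-zero elements in the second case. Without it, $Z = \{0\}$ and no four-element set would arise, showing in particular that the analogous question for $\CP^2 \# \CP^2$ has a different character.
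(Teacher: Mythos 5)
Your proof is correct and follows essentially the same approach as the paper: both characterize the four-element sets as the primitive degree-two elements that square to zero, and then note that ring automorphisms preserve both primitivity and the square-zero condition. Your write-up simply spells out the primitivity argument and the role of $v^2 = -u^2$ a bit more explicitly than the paper does.
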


% PROOF

\begin{proof}
In the case of $H^\ast(\sph^2\times \sph^2)$, notice that, for any $ax + by\in H^2(\sph^2\times \sph^2)$, the identity $(ax + by)^2 = 2ab\,xy$ holds.  Thus,  $(ax + by)^2 = 0$ if and only if either $a=0$ or $b=0$.  On the other hand, in the case of $H^\ast(\CP^2\# \overline{\CP}^2)$, $(au+bv)^2 = 0$ if and only if $a = \pm b$.  It follows that the sets $\{\pm x,\pm y\}$ and $\{\pm(u+v), \pm(u-v)\}$ characterize all primitive elements of degree two in their respective rings which square to $0$.  Therefore, these two sets are fixed by any automorphism of their respective cohomology rings.
\end{proof}

Theorem \ref{T:DIFFEO_CLASSIF} can now be proven by considering the two possible rational homotopy types of a rationally elliptic $5$-manifold separately.

% THM: M^5 HAS RHT OF S5

\begin{theorem}  
\label{T:S5}
Suppose $M^5$ is a smooth, closed, simply connected $5$-manifold which is rationally homotopy equivalent to $\sph^5$.  If $M^5$ admits a double disk-bundle decomposition, then $M^5$ is diffeomorphic to either $\sph^5$ or the Wu manifold $\SU(3)/\SO(3)$.
\end{theorem}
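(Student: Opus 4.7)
The plan is to combine Theorem~\ref{T:NoRank} with Barden's diffeomorphism classification of smooth, closed, simply connected $5$-manifolds. Since $M \simeq_\Q \sph^5$ has a unique non-trivial rational homotopy group in degree~$5$, Theorem~\ref{T:NoRank} forces the integer $s$ there to equal $4$, so the loop-space factor of the homotopy fibre $F$ must be $\Omega \sph^5$ (coming from $k=5$ odd). Together with the dimensional constraint $\dim B_\pm = 4 - \ell_\pm$ and inspection of Table~\ref{table:Qtype}, this narrows the possible configurations to: (a) $\ell_\pm = 4$, so $B_\pm$ are points; (b) $(\ell_-,\ell_+) = (1,3)$ with both sphere bundles orientable; (c) $\ell_\pm = 2$ with $F \simeq_\Q \sph^2 \x \sph^2 \x \Omega \sph^5$, forcing $B_\pm \cong \sph^2$; and (d) $\ell_\pm = 1$ with exactly one of the two circle bundles orientable.

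Cases (a), (b), and (c) can each be handled to show $M \cong \sph^5$. Case (a) gives $M = D^5 \cup D^5 \cong \sph^5$ directly. For (b), $B_+ = \sph^1$ together with orientability forces $L$ to be the trivial orientable $\sph^3$-bundle $\sph^1 \x \sph^3$; a van Kampen argument using $\pi_1(M) = 0$ then forces $B_- \cong \sph^3$, so $M = (\sph^3 \x D^2) \cup (D^4 \x \sph^1) = \partial(D^4 \x D^2) \cong \sph^5$. For (c), $L$ is a simply connected $\sph^2$-bundle over $\sph^2$, so $L \in \{\sph^2 \x \sph^2, \CP^2 \# \overline{\CP}^2\}$; Lemma~\ref{L:COH_ISO} applied to the two fibre classes in $H^2(L)$ (which have self-intersection zero) identifies the bundle projections and forces the decomposition to coincide with the standard one $\sph^5 = (\sph^2 \x D^3) \cup (D^3 \x \sph^2)$.

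The substantive case is (d), where $M$ may equal either $\sph^5$ or $\SU(3)/\SO(3)$. As $M$ is orientable and $L$ is a two-sided hypersurface, $L$ is also orientable; the standard identity $w_1(L) = \pi^\ast_\pm(w_1(B_\pm) + w_1(E_\pm))$ combined with the injectivity $\pi^\ast_\pm \colon H^1(B_\pm;\Z_2) \hookrightarrow H^1(L;\Z_2)$ from the $\Z_2$-Gysin sequence then forces the base of the orientable bundle (say $B_-$) to be orientable and $B_+$ to be non-orientable. The long exact sequence of $F \to L \to M$ together with $\pi_1(F) = \Z \oplus \Z_2$ and $\pi_2^\Q(M) = 0$ restricts $\pi_1(L)$ to $\{\Z, \Z \oplus \Z_2\}$, so both $\pi_1(B_\pm)$ are abelian of rank at most one. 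By geometrization, $B_-$ lies in $\{\sph^3, L(n,q), \sph^1 \x \sph^2\}$ and $B_+$ in a short list of non-orientable closed 3-manifolds including $\sph^1 \, \tilde{\x} \, \sph^2$ and $\RP^2 \x \sph^1$. A rational Mayer--Vietoris computation yields $b_1(B_-) + b_1(B_+) = 1$, further narrowing the options, and an integral Mayer--Vietoris computation in each remaining sub-case then shows that $H_2(M;\Z) \in \{0, \Z_2\}$, with $w_2(M) \neq 0$ in the latter case. Barden's classification then identifies $M$ as either $\sph^5$ or $\SU(3)/\SO(3)$.

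The main obstacle is the case analysis in configuration (d): classifying the 4-manifolds $L$ that simultaneously admit the prescribed orientable circle-bundle structure over $B_-$ and non-orientable one over $B_+$, together with the integral Mayer--Vietoris computation in each case (which has to handle the twisted coefficients arising from the non-orientable base $B_+$) and the verification that $w_2(M) \neq 0$ when $H_2(M;\Z) = \Z_2$.
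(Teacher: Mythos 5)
Your overall framework--pinning the loop-space factor of $F$ to $\Omega\sph^5$ via Theorem~\ref{T:NoRank}, enumerating the configurations of $\{\ell_\pm\}$ from Table~\ref{table:Qtype}, and finishing with the Barden--Smale classification--is a reasonable route, related in spirit to the paper's proof, which instead dichotomizes by $\pi_2^\Q(F)$ rather than by $\{\ell_\pm\}$.  Cases (a) and (b) check out: in (b) you should be a little careful, since $M = (D^2\x\sph^3)\cup_f(\sph^1\x D^4)$ for an \emph{a priori} arbitrary gluing $f$ need not be the standard boundary of $D^2\x D^4$, but a Mayer--Vietoris computation gives $H_2(M)=0$ and Barden then forces $M\cong\sph^5$.

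There is a genuine error in case (c), however.  You claim that Lemma~\ref{L:COH_ISO} ``forces the decomposition to coincide with the standard one,'' hence $M\cong\sph^5$.  This is wrong: $L$ is an $\sph^2$-bundle over $\sph^2$, so $L\in\{\sph^2\x\sph^2,\ \CP^2\#\overline{\CP}^2\}$.  The two bundle projections $\pi_\pm\colon L\to B_\pm$ pull back generators $z_\pm\in H^2(B_\pm)$ to primitive square-zero classes in $H^2(L)$, and Lemma~\ref{L:COH_ISO} places them in $\{\pm x,\pm y\}$ or $\{\pm(u+v),\pm(u-v)\}$, but the latter possibility is not excluded.  When $L=\CP^2\#\overline{\CP}^2$, the classes $u+v$ and $u-v$ span only an index-$2$ sublattice of $H^2(L)$, and the Mayer--Vietoris sequence
\[
0\lra H^2(B_-)\oplus H^2(B_+)\stackrel{\pi_-^*-\pi_+^*}{\lra} H^2(L)\lra H^3(M^5)\lra 0
\]
then yields $H^3(M^5)=\Z_2$, i.e.\ $M^5$ is the Wu manifold, not $\sph^5$.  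The paper's proof handles exactly this dichotomy; in your scheme, case (c) must be admitted as a second source of the Wu manifold, not just case (d).  Separately, case (d)--which you call the substantive one--is currently only an outline (a list of candidate $3$-manifolds, a promise of Mayer--Vietoris computations with twisted coefficients, and a promise to check $w_2$), not a proof.  Incidentally, the $w_2$ check is unnecessary: by Barden, the only closed, simply connected $5$-manifold with $H_2=\Z_2$ is already the Wu manifold, since every spin prime summand has $H_2$ equal to $0$, $\Z$, or $\Z_k\oplus\Z_k$.

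For comparison, the paper avoids your case-by-case enumeration entirely in the non-codimension-$3$ cases by arguing through $\pi_2$: Lemma~\ref{L:pi1B} together with the dimension bound $\dim B\leq 3$ shows some singular leaf $B$ is finitely covered by $\sph^2$ or $\sph^3$, so $\pi_2(L)$ is free abelian; since $\pi_2(M^5)$ is torsion (as $M^5\simeq_\Q\sph^5$) and $\pi_2(F)$ is torsion whenever $\pi_2^\Q(F)=0$, exactness forces $\pi_2(L)=0$ and hence $\pi_2(M^5)\hookrightarrow\pi_1(F)$, whose torsion subgroup is $0$ or $\Z_2$.  This collapses your cases (a), (b) and (d) into a single argument, and only the $\ell_\pm=2$ case needs the $H^2(L)$ analysis.
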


% PROOF

\begin{proof}
By the Barden--Smale classification of smooth, closed, simply connected $5$-manifolds \cite{Ba,Sm3}, it suffices to show that $H_2(M^5)$ is either trivial or $\Z_2$.  By the Hurewicz Theorem and Poincar\'e duality, this is equivalent to establishing the same for either $\pi_2(M^5)$ or $H^3(M^5)$.  As $M^5 \simeq_\Q \sph^5$, by hypothesis, it is already clear that $\pi_2(M^5)$ and $H^3(M^5)$ are at most torsion.

Let $DB_- \cup_L DB_+$ be a double disk-bundle decomposition of $M^5$, with $B_\pm$ connected, and let $F$ be the homotopy fiber of the inclusion $L \to M^5$.  Since $M^5$ is rationally homotopy equivalent to $\sph^5$, Theorem \ref{T:NoRank} implies that the loop-space factor of %(a finite cover of) 
$F$ must have non-trivial rational $\pi_4$ and, hence, the loop-space factor must be $\Omega \sph^5$.  It follows from Table \ref{table:Qtype} that $\pi_1(F) \neq \Z^2$ and, therefore, that $\rk(\pi_1(F)) \leq 1$.  Thus, by Lemma \ref{L:pi1B}, a singular leaf $B \in \{B_\pm\}$ must have finite fundamental group.  By Proposition \ref{P:exceptional}, $\dim(B)\leq 3$ and, therefore, $B$ is finitely covered by either $\sph^2$ or $\sph^3$.  In particular, this implies that $\pi_2(B)$ is either trivial or isomorphic to $\Z$.  The long exact homotopy sequence for the fibration $\sph^\ell \to L \to B$, where $1 \leq \ell \leq 4$, now yields that $\pi_2(L)$ is free abelian.

Consider the long exact homotopy sequence for the homotopy fibration $F \to L \to M^5$.  Assume first that $\pi_2^\Q(F) = 0$, that is, that $\pi_2(F)$ is at most torsion, and suppose that $\pi_2(M^5) \neq 0$, that is, that $M^5 \not\cong \sph^5$.  Since $\pi_2(L)$ is free abelian, it follows that $\pi_2(L) = 0$ and, hence, that the torsion group $\pi_2(M^5)$ injects into $\pi_1(F)$.  However, by Table \ref{table:Qtype}, $\pi_1(F)$ contains a torsion subgroup only if $\pi_1(F) \in \{Q_8, \Z \oplus \Z_2\}$.  Since the loop-space factor of %(a finite cover of) 
$F$ is $\Omega \sph^5$, Table \ref{table:Qtype} implies that $\pi_1(F) = Q_8$ is impossible.  Therefore, $\pi_1(F) = \Z \oplus \Z_2$ and the only possibility is that $\pi_2(M^5) = \Z_2$, that is, that $M^5$ is diffeomorphic to the Wu manifold $\SU(3)/\SO(3)$.

Assume, on the other hand, that $\pi_2^\Q(F) \neq 0$.  As the loop-space factor of %(a finite cover of) 
$F$ is $\Omega \sph^5$, it follows from Table \ref{table:Qtype} that $\pi_1(F) = 0$, $F \simeq_\Q \sph^2 \x \sph^2 \x \Omega \sph^5$ and $\codim(B_\pm) = 3$.  The long exact homotopy sequences for $F \to L \to M^5$ and $\sph^2 \to L \to B_\pm$ yield, in addition, that $\pi_1(L) = \pi_1(B_\pm) = 0$.  Therefore, $B_\pm \cong \sph^2$ and $L$ is an $\sph^2$-bundle over $\sph^2$.  Thus, $L$ is diffeomorphic to either $\sph^2 \x \sph^2$ or the non-trivial bundle $\CP^2\#\ol{\CP}^2$.

Since $M^5$ is simply connected and has the same rational cohomology as $\sph^5$, it follows from the Universal Coefficient Theorem that $H^2(M^5) = 0$.  The Mayer--Vietoris sequence for the decomposition $DB_- \cup_L DB_+$ now yields the short exact sequence 
\[
0\lra H^2(B_-)\oplus H^2(B_+)\stackrel{\pi_-^* - \pi_+^*}{\lra} H^2(L) \lra H^3(M^5)\lra 0,
\]
where $\pi_\pm \colon L \to B_\pm$ are the sphere-bundle projection maps.  In particular, the homomorphism $\pi_-^* - \pi_+^* \colon H^2(B_-)\oplus H^2(B_+)\to H^2(L)$ must be injective. Therefore, the images $\pi_\pm^*(z_\pm) \in H^2(L)$ of generators $z_\pm \in H^2(B_\pm)$ cannot differ by a sign, since, otherwise, there is an $\ve \in \{\pm 1\}$ such that $(\pi_-^* - \pi_+^*)(z_-, \ve z_+) = 0$.

Now, from the Gysin sequences for $\sph^2 \to L \stackrel{\pi_\pm}{\lra} B_\pm$, the images $\pi_\pm^*(z_\pm) \in H^2(L)$ are primitive elements which must necessarily square to zero.  Therefore, by Lemma \ref{L:COH_ISO}, together with the above observation that $\pi_-^*(z_-) \neq \pm \pi_+^*(z_+)$, it follows that there exist $\ve_1, \ve_2 \in \{\pm 1\}$ such that $\{\pi_-^*(z_-), \pi_+^*(z_+)\}$ is equal (as a set) to either $\{\ve_1 x, \ve_2 y\}$ or $\{\ve_1 (u+v), \ve_2 (u-v)\}$, depending on whether the bundle $L$ is trivial or not.  In turn, this implies that the map $\pi_-^* - \pi_+^* \colon H^2(B_-)\oplus H^2(B_+)\to H^2(L)$
is surjective whenever $L \cong \sph^2 \x \sph^2$, and of index $2$ whenever $L \cong \CP^2 \# \ol \CP^2$.  Finally, as desired, this implies that either $H^3(M^5) = 0$ or $H^3(M^5) = \Z_2$.
\end{proof}

To complete the proof of Theorem \ref{T:DIFFEO_CLASSIF}, it remains only to deal with the case of manifolds rationally homotopy equivalent to $\sph^3 \x \sph^2$.

% THM: M^5 HAS RHT of S2xS3

\begin{theorem}
\label{T:S3S2}  
Suppose $M^5$ is a smooth, closed, simply connected $5$-manifold which is rationally homotopy equivalent to $\sph^3 \x \sph^2$.  
If $M^5$ admits a double disk-bundle decomposition, then $M^5$ is diffeomorphic to $\sph^3\times \sph^2$ or to the unique non-trivial $\sph^3$-bundle over $\sph^2$.
\end{theorem}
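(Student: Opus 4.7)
The plan is to apply the Barden--Smale classification \cite{Ba, Sm3} of smooth, closed, simply connected $5$-manifolds, reducing the theorem to showing that $H_2(M^5)$ is torsion-free; since $b_2(M^5) = 1$ from $M^5 \simeq_\Q \sph^3 \x \sph^2$, this would leave only $\sph^3 \x \sph^2$ and the non-trivial $\sph^3$-bundle over $\sph^2$, distinguished by $w_2$. I will first apply Theorem \ref{T:NoRank} to a double disk-bundle decomposition $DB_- \cup_L DB_+$ with $B_\pm$ connected. Since $\pi_*^\Q(M^5)$ is supported in degrees $2$ and $3$, and the integer $s$ attached to the loop-space factor of the homotopy fiber $F$ of $L \to M^5$ must be even with $\pi_{s+1}^\Q(M^5) \neq 0$, the only possibility is $s = 2$. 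Inspection of Table \ref{table:Qtype} will then isolate the first row, giving $F \simeq_\Q \sph^1 \x \sph^1 \x \Omega \sph^3$, $\ell_\pm = 1$, $\pi_1(F) = \Z^2$, and both principal circle bundles $\sph^1 \to L \to B_\pm$ orientable. In particular, $B_\pm$ will be closed, orientable $3$-manifolds.

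Next, from the long exact sequence for $F \to L \to M^5$, $\pi_1(L)$ will be an abelian quotient of $\Z^2$. By Lemma \ref{L:pi1B}, I will split into two cases: either both $B_\pm$ have infinite fundamental group, or at least one has finite fundamental group. In the first case, $\rk(\pi_1(L)) = 2$ forces $\pi_1(L) = \Z^2$; since $\pi_1(B_\pm)$ is then a non-trivial, infinite, abelian quotient of $\Z^2$, the classification of closed orientable $3$-manifolds with abelian $\pi_1$ (cf.\ \cite[Table 2]{AFW}) will force $B_\pm \cong \sph^2 \x \sph^1$ (the alternative $T^3$ being excluded since $\Z^3$ cannot be a quotient of $\Z^2$). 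The long exact sequences of the two circle bundles then force the Euler classes to vanish, so $L = T^2 \x \sph^2$. A direct integral Mayer--Vietoris computation for $M^5 = DB_- \cup_L DB_+$, using the K\"unneth decomposition of $H^*(L)$, will yield $H^3(M^5) = \Z$ and hence $H_2(M^5) = \Z$.

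In the remaining case, at least one of the $B_\pm$ (say $B_-$) has finite fundamental group and is thus a lens space or $\sph^3$, while $B_+$ is either another such space form or $\sph^2 \x \sph^1$; correspondingly $\pi_1(L) = \Z \oplus T_L$ for some finite cyclic $T_L$. I will again compute $H^*(M^5)$ via Mayer--Vietoris, this time using the Gysin sequences of the two circle bundles to track both the free and torsion parts of $H^*(L)$. The decisive constraint will be $H^4(M^5) = 0$ (from simple connectivity and Poincar\'e duality), which forces the Mayer--Vietoris map $H^3(B_-) \oplus H^3(B_+) \to H^3(L)$ to be surjective. Coupled with Theorem \ref{T:triv} (characterizing triviality of orientable circle bundles via $\pi_1(L)$) and the compatibility required for both bundle structures to coexist on $L$, this surjectivity will restrict the possible Euler classes tightly enough that a case-by-case check yields $H^3(M^5) = \Z$. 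The main obstacle is coordinating the torsion contributions from the two distinct Gysin sequences so that they cancel in $H^3(M^5)$; this cancellation is ultimately a rigidity consequence of the double disk-bundle structure together with $H^4(M^5) = 0$.
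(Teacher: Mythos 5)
Your reduction to showing that $H_2(M^5)\cong\Z$ via Barden--Smale, and your use of Theorem~\ref{T:NoRank} to pin down the loop-space factor of $F$ to $\Omega\sph^3$ (so that $s=2$), are both correct and match the paper's strategy. However, there is a genuine gap in the next step: your claim that "inspection of Table~\ref{table:Qtype} will then isolate the first row" is false. The loop-space factor $\Omega\sph^3$ is compatible with \emph{two} rows of Table~\ref{table:Qtype}: the row with $\pi_1(F)=\Z^2$ and $F\simeq_\Q\sph^1\x\sph^1\x\Omega\sph^3$ (so $\ell_\pm=1$), \emph{and} the row $F\simeq_\Q\sph^\alpha\x\Omega\sph^{\alpha+1}$ with $1<\alpha=\beta$ specialized to $\alpha=2$, giving $F\simeq_\Q\sph^2\x\Omega\sph^3$ with $\pi_1(F)=0$ and $\ell_\pm=2$. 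In dimension five this second case is perfectly admissible: $B_\pm$ are then closed simply connected surfaces, hence $B_\pm\cong\sph^2$, and $L$ is an $\sph^2$-bundle over $\sph^2$, i.e.\ $\sph^2\x\sph^2$ or $\CP^2\#\ol\CP^2$. Your proposal never addresses it, so the argument as written cannot be completed; in the paper this case is handled via Lemma~\ref{L:COH_ISO} (which constrains the degree-two generators squaring to zero) together with a Mayer--Vietoris computation that shows $H^3(M^5)\cong\Z$ regardless of which $\sph^2$-bundle $L$ is.

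On the $\ell_\pm=1$ case you do treat, the subcase split (both $\pi_1(B_\pm)$ infinite versus at least one finite) and the resulting $3$-manifold analysis plus Gysin/Mayer--Vietoris bookkeeping can be made to work, but it is substantially heavier than what the paper does, and your sketch of the "at least one finite $\pi_1$" subcase is not carried through (you acknowledge the torsion bookkeeping as an "obstacle"). The paper instead observes that abelian $\pi_1(B_\pm)$ forces $B_\pm$ to be finitely covered by $\sph^3$, $\sph^2\x\sph^1$, or $T^3$, hence $\pi_2(B_\pm)$ and then $\pi_2(L)$ are free abelian; combining this with $\pi_1(F)=\Z^2$ (and the triviality of $\pi_2(F)\to\pi_2(L)$, which follows from Theorem~\ref{T:NoRank}) in the long exact sequence of $F\to L\to M^5$ immediately gives $\pi_2(M^5)$ free abelian of rank one, with no subcase split and no reliance on pinning down $L$ or the Euler classes. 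I would recommend adopting that streamlined argument for $\ell_\pm=1$ and, more importantly, adding the missing $\ell_\pm=2$ case.
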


% PROOF

\begin{proof}
As in the proof of Theorem~\ref{T:S5}, the proof appeals to the Barden--Smale classification of closed, simply connected $5$-manifolds \cite{Ba,Sm3}. In particular, it is sufficient to show that $\pi_2(M^5) \cong H_2(M^5) \cong H^3(M^5) \cong \Z$.

To begin, let $DB_- \cup_L DB_+$ be a double disk-bundle decomposition of $M^5$, with $B_\pm$ connected, and let $F$ be the homotopy fiber of the inclusion $L \to M^5$.  By hypothesis, $\pi_2^\Q (M^5) = \Q$ and $\pi_3^\Q (M^5) = \Q^2$.  As $\pi_3^\Q (M^5)$
is the only odd-degree rational homotopy group of $M^5$ which is non-trivial, it follows from Theorem~\ref{T:NoRank} that the loop-space factor of %(a finite cover of) 
$F$ must have non-trivial rational $\pi_2$.  Thus, by Table \ref{table:Qtype}, %(a finite cover of) 
$F$ is rationally homotopy equivalent to one of $\sph^1\times \sph^1\times \Omega\sph^3$ or $\sph^2\times \Omega \sph^3$.

Suppose that %, up to a finite cover, 
$F$ is rationally homotopy equivalent to $\sph^1\times \sph^1\times \Omega\sph^3$.  From Table \ref{table:Qtype}, this implies that $\pi_1(F) = \Z^2$, that $\pi_2^\Q(F) = \Q$, and that $L$ is an $\sph^1$-bundle over each of the closed $3$-manifolds $B_\pm$.  From the long exact homotopy sequences for $F \to L \to M^5$ and $\sph^1 \to L \to B_\pm$, it is now apparent that $\pi_1 (L)$ and $\pi_1(B_\pm)$ are abelian.  As in the proof of Proposition \ref{P:L_BOUND}, this implies that each of $B_\pm$ is finitely covered by one of $\sph^3$, $\sph^2 \x \sph^1$ or $T^3$.  In particular, it follows that $\pi_2(B_\pm)$ is free abelian.  Therefore, from the long exact homotopy sequence for $\sph^1 \to L \to B_\pm$, it is now clear that $\pi_2(L)$ must also be free abelian.  However, given that $\pi_1(F) = \Z^2$, applying this fact to the long exact homotopy sequence for $F \to L \to M^5$ yields that $\pi_2(M^5)$ is free abelian.  Since $\rk(\pi_2(M^5)) = 1$, it may be concluded that $\pi_2(M^5) = \Z$, as desired.

Assume now that %, up to a finite cover, 
$F$ is rationally homotopy equivalent to
$\sph^2\times \Omega \sph^3$.  
From Table \ref{table:Qtype}, this implies that $\pi_1(F) = 0$ and that $L$ is an $\sph^2$-bundle over each of the closed $2$-manifolds $B_\pm$.   The long exact homotopy sequences for $F \to L \to M^5$ and $\sph^1 \to L \to B_\pm$ yield that $\pi_1(L) = \pi_1(B_\pm) = 0$.  Therefore, $B_\pm \cong \sph^2$ and, hence, $L$ is diffeomorphic to either $\sph^2 \x \sph^2$ or $\CP^2 \# \ol \CP^2$.

If $z_\pm \in H^2(B_\pm)$ are generators, then the Gysin sequences for $\sph^2 \to L \stackrel{\pi_\pm}{\lra} B_\pm$ yield that their images $\pi_\pm^*(z_\pm) \in H^2(L)$ are primitive elements which must necessarily square to zero. Therefore, by Lemma \ref{L:COH_ISO}, $\pi_\pm^*(z_\pm)$ lie in either $\{\pm x, \pm y\}$ or $\{\pm(u+v), \pm(u-v)\}$, depending on whether the bundle $L$ is trivial or not.

Now, since $M^5$ is simply connected and has the same rational cohomology as $\sph^3 \x \sph^2$, the Universal Coefficient Theorem yields $H^2(M^5) = \Z$.  The Mayer--Vietoris sequence for the decomposition $DB_- \cup_L DB_+$ therefore provides the exact sequence 
\[
0 \lra H^2(M^5) \lra H^2(B_-)\oplus H^2(B_+)\stackrel{\pi_-^* - \pi_+^*}{\lra} H^2(L) \lra H^3(M^5) \lra 0.
\]
In particular, the homomorphism $\pi_-^* - \pi_+^*\colon H^2(B_-)\oplus H^2(B_+)\to H^2(L)$ has kernel isomorphic to $H^2(M^5) = \Z$ and, hence, some linear combination of $\pi_\pm^*(z_\pm)$ must be trivial.  However, by the above observations about these elements, this is impossible unless $\pi_\pm^*(z_\pm)$ agree up to sign.  Therefore, in either case,
\[
H^3(M^5) \cong H^2(L) / \<\pi_-^*(z_-), \pi_+^*(z_+)\> \cong H^2(L) / \<\pi_-^*(z_-)\> \cong \Z
\]
as desired.
\end{proof}

%-------------------------------
% 	DIMENSION 6
%-------------------------------

\section{Double disk bundles in dimension \texorpdfstring{$6$}{6}}
\label{S:DIM_6}

As mentioned in the introduction, there exist closed, simply connected smooth $6$-di\-men\-sional counter-examples to the rational ellipticity of double disk bundles.  The main goals of this section are to establish Theorem \ref{T:thmA} in dimension six and Theorem \ref{T:6dim_b2}.% and to show that it is optimal. 

Throughout this section, $M^6$ will denote a smooth, closed, simply connected $6$-manifold which admits a double disk-bundle decomposition $DB_- \cup_L DB_+$ with $B_\pm$ connected.

As a consequence of Proposition \ref{P:L_BOUND}, $M^6$ is rationally elliptic whenever one of $B_\pm$ is of codimension $\geq 3$.  Therefore, only the case of codimension-two singular leaves $B_\pm$ needs to be considered in what follows.  In this case, Table \ref{table:Qtype} yields that the homotopy fiber $F$ of the inclusion $L \to M^6$ has $\pi_1(F) \in \{Q_8, \Z \oplus \Z_2, \Z^2\}$.  

% LEMMA

\begin{lem}
\label{L:fundgpF}
Suppose that the singular leaves $B_\pm$ in the double disk-bundle decomposition of $M^6$ are both of codimension two and that some $B \in \{B_\pm\}$ has finite fundamental group.  Then
$$
\rank(\pi_1(F)) + \rank(\pi_2(B)) = b_2(M^6) + 1 \,.
$$ 
\end{lem}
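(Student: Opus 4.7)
The plan is to derive the identity from Euler-characteristic computations on two rationalized long exact sequences of homotopy groups: the one for the circle bundle $\sph^1 \to L \to B$ (note that $\ell = 1$ because $\codim(B) = 2$) and the one for the homotopy fibration $F \to L \to M^6$. At the end, the task will reduce to showing that a certain connecting homomorphism is surjective, which falls within the scope of Theorem \ref{T:NoRank}.

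First, because $\pi_1(B)$ is finite, the rationalized homotopy sequence for $\sph^1 \to L \to B$ collapses to the four-term exact sequence
\[
0 \to \pi_2^\Q(L) \to \pi_2^\Q(B) \to \Q \to \pi_1^\Q(L) \to 0,
\]
whose Euler characteristic yields $\rank(\pi_2(B)) = \rank(\pi_2(L)) + 1 - \rank(\pi_1(L))$. Next, rationalizing the sequence for $F \to L \to M^6$ and using $\pi_1(M^6) = 0$ produces
\[
\pi_3^\Q(M^6) \xrightarrow{\partial} \pi_2^\Q(F) \to \pi_2^\Q(L) \to \pi_2^\Q(M^6) \to \pi_1^\Q(F) \to \pi_1^\Q(L) \to 0.
\]
Writing $I = \im(\partial)$ and taking the Euler characteristic of the truncated exact sequence starting from $\pi_2^\Q(F)/I$ gives
\[
\rank(\pi_2(F)) - \dim I - \rank(\pi_2(L)) + b_2(M^6) - \rank(\pi_1(F)) + \rank(\pi_1(L)) = 0.
\]
Summing the two identities eliminates $\rank(\pi_2(L))$ and $\rank(\pi_1(L))$ and yields
\[
\rank(\pi_1(F)) + \rank(\pi_2(B)) = b_2(M^6) + 1 + \bigl( \rank(\pi_2(F)) - \dim I \bigr).
\]
Thus the claim reduces to proving that $\partial : \pi_3^\Q(M^6) \to \pi_2^\Q(F)$ is surjective.

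For the final step, I would invoke Table \ref{table:Qtype} restricted to $\ell_\pm = 1$, which leaves exactly three rational types for $F$, namely $\sph^3 \x \sph^3 \x \Omega \sph^7$, $\sph^1 \x \sph^3 \x \Omega \sph^5$, and $\sph^1 \x \sph^1 \x \Omega \sph^3$. In the first two cases $\pi_2^\Q(F) = 0$, so surjectivity is automatic. Only the third case has any content: here $\pi_2^\Q(F) = \Q$, the loop-space factor $\Omega \sph^3$ gives $s = 2$, and Theorem \ref{T:NoRank} asserts the non-triviality of $\partial : \pi_3^\Q(M^6) \to \pi_2^\Q(F)$, which is the same as surjectivity because the target is one-dimensional. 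The main obstacle is precisely this $\pi_1(F) = \Z^2$ case, and Theorem \ref{T:NoRank} is exactly the tool built earlier to handle it.
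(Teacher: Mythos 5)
Your proof is correct and takes essentially the same route as the paper's: both extract the rank identity from the long exact homotopy sequences for $\sph^1 \to L \to B$ and $F \to L \to M^6$, restrict to the three rational types of $F$ allowed by $\ell_\pm = 1$ in Table \ref{table:Qtype}, and invoke Theorem \ref{T:NoRank} to kill the potential defect $\rank(\pi_2(F)) - \dim I$ in the only non-trivial case $\pi_1(F) = \Z^2$. The paper phrases the key step slightly differently (it concludes directly that the image of $\pi_2(F) \to \pi_2(L)$ has rank zero, hence $\pi_2(L)$ injects into $\pi_2(M^6)$, and it justifies the rank count at the $\pi_1$ level by first showing $\pi_2(B)$, and hence $\pi_2(L)$, is free abelian via the universal cover of $B$), but this is the same argument packaged as two explicit rank equations rather than as an Euler-characteristic bookkeeping.
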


% PROOF

\begin{proof}
Since the fundamental group $\pi_1(B)$ is finite, the universal cover $\tilde B$ of $B$ is a smooth, closed, simply connected $4$-manifold which satisfies Poincar\'e duality and has $\pi_j(\tilde B) = \pi_j(B)$ for all $j \geq 2$.  Together with the Hurewicz and Universal Coefficient Theorems, it may thus be concluded that $H_2( \tilde B) \cong \pi_2(\tilde B) = \pi_2(B)$ is free abelian.

From the long exact homotopy sequence for the fibration $\sph^1 \to L \to B$, it follows that $\pi_2(L)$ is also free abelian and
\beq
\label{E:LES1}
\rank(\pi_2(B)) = \rank(\pi_2(L)) - \rank(\pi_1(L)) + 1 \,.
\eeq

By Table~\ref{table:Qtype}, there is a unique $j_0 \in \N$, such that $\pi_{2j_0}(F)$ is of positive rank ($=1$).  If $\pi_1(F) \in \{Q_8, \Z \oplus \Z_2\}$, then $\rank(\pi_2(F)) = 0$, while $\rank(\pi_2(F)) = 1$ whenever $\pi_1(F) = \Z^2$.  In this latter case, Table~\ref{table:Qtype} yields that %(a finite cover of) 
$F$ is rationally homotopy equivalent to $\sph^1 \x \sph^1 \x \Omega \sph^3$. Therefore, as a consequence of Theorem~\ref{T:NoRank}, the image of the homomorphism $\pi_{2}(F) \to \pi_{2}(L)$ in the long exact sequence for the homotopy fibration $F \to L \to M^6$ has rank $0$.  In other words, the free abelian group $\pi_2(L)$ must inject into $\pi_2(M^6)$.  

It now follows from exactness and the Hurewicz Theorem that, in all three cases, 
\beq
\label{E:LES2}
\rank(\pi_1(F)) = b_2(M^6) - \rank(\pi_2(L)) + \rank(\pi_1(L)) \,.
\eeq
Equations \eqref{E:LES1} and \eqref{E:LES2} together yield $\rank(\pi_1(F)) + \rank(\pi_2(B)) = b_2(M^6) + 1$, as desired.
\end{proof}

% THM

\begin{thm}
\label{T:nonoriented}
Suppose that the singular leaves $B_\pm$ in the double disk-bundle decomposition of $M^6$ are both of codimension two 
and that at least one of the bundles $\sph^1 \to L \to B_\pm$ is non-orientable.  Then $M^6$ is rationally elliptic with $b_2(M^6) \leq 2$.
\end{thm}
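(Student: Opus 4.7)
The plan is to combine Lemma~\ref{L:fundgpF} and Lemma~\ref{L:high_enough} with a Leray--Serre spectral sequence (LSS) analysis of the homotopy fibration $F \to L \to M^6$. First, by Table~\ref{table:Qtype}, the non-orientability hypothesis forces $\pi_1(F) \in \{Q_8, \mathbb{Z} \oplus \mathbb{Z}_2\}$, so $\rank \pi_1(L) \leq \rank \pi_1(F) \leq 1$. By Lemma~\ref{L:pi1B}, some $B \in \{B_\pm\}$ has finite fundamental group; its universal cover $\tilde B$ is then a closed simply connected $4$-manifold, and Lemma~\ref{L:fundgpF} yields
\[
b_2(\tilde B) \;=\; \rank \pi_2(B) \;=\; b_2(M^6) + 1 - \rank \pi_1(F).
\]
It will therefore suffice to prove $b_2(M^6) \leq 1 + \rank \pi_1(F)$: this forces $b_2(\tilde B) \leq 2$, so that $\tilde B$ is a rationally elliptic simply connected $4$-manifold, $B$ is rationally elliptic (being finitely covered by $\tilde B$), and by Lemma~\ref{L:high_enough} so is $M^6$; the bound $b_2(M^6) \leq 2$ then follows immediately from the displayed identity.

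The crux is thus to bound $b_2(M^6)$. For this I would analyze the LSS of $F \to L \to M^6$ with rational coefficients, exploiting that $L$ is a closed orientable $5$-manifold (so $H^k(L;\Q) = 0$ for $k \geq 6$) and that $\pi_1(L)$ is a quotient of $\pi_1(F)$. In the $Q_8$ case, $F \simeq_\Q \sph^3 \x \sph^3 \x \Omega \sph^7$ has rational cohomology concentrated in degrees divisible by three, and $\pi_1(L)$ (a quotient of $Q_8$) has trivial rational abelianization, so $b_1(L;\Q) = b_4(L;\Q) = 0$. The constraint $E^{4,0}_\infty \subseteq H^4(L;\Q) = 0$ then forces the transgression $d_4 \colon H^3(F;\Q) \cong \Q^2 \to H^4(M^6;\Q)$ to be surjective, yielding $b_2(M^6) \leq 2$; a further look at total degrees five and six (using $H^6(L;\Q)=0$ and $H^5(L;\Q)=\Q$) refines this to $b_2(M^6) = 1$.

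In the case $\pi_1(F) = \mathbb{Z} \oplus \mathbb{Z}_2$, we have $F \simeq_\Q \sph^1 \x \sph^3 \x \Omega \sph^5$ and exactly one of the bundles, say $L \to B_+$, is non-orientable. If $\pi_1(B_+)$ is finite, the Gysin sequence with twisted $\Q$-coefficients on the non-orientable side (using Poincar\'e duality $H^k(B_+; \mathcal{O}) \cong H^{4-k}(B_+;\Q)$ for non-orientable manifolds) gives $b_1(L;\Q) = 0$; Mayer--Vietoris at degree one then forces $b_1(B_-;\Q) = 0$, and since $\pi_1(B_-)$ is an abelian quotient of $\pi_1(F)$, it is finite as well. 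Comparing the two Gysin sequences on $L \to B_\pm$ yields $b_2(B_-) = b_2(B_+) + 1$, and the LSS constraint $E^{2,0}_\infty \hookrightarrow H^2(L;\Q)$ together with $\chi(M^6) = \chi(B_-) + \chi(B_+)$ and $b_2(B_\pm) \leq b_2(\tilde B_\pm)$ forces $b_2(M^6) \leq 2$. In the alternative subcase, where $\pi_1(B_+)$ is infinite while $\pi_1(B_-)$ is finite, the Gysin sequence on the orientable side gives $b_1(L;\Q) = 1$, forcing the Euler class of $L \to B_-$ to vanish, so that $L \cong B_- \x \sph^1$; an analogous LSS computation at total degree four then gives $b_2(M^6) \leq 1$.

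The main obstacle is the case $\pi_1(F) = \mathbb{Z} \oplus \mathbb{Z}_2$: the non-trivial transgression $\tau(x_1) \in H^2(M^6;\Q)$ obstructs a uniform treatment, and the argument splits into subcases depending on which singular leaf has finite fundamental group, each requiring a careful interplay between the LSS, the Gysin sequence with twisted coefficients, and Mayer--Vietoris.
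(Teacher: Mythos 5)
Your reduction of the statement to bounding $b_2(M^6)$, via Lemma~\ref{L:pi1B}, Lemma~\ref{L:fundgpF}, Freedman's classification of simply connected $4$-manifolds, and Lemma~\ref{L:high_enough}, is exactly the paper's strategy, and your handling of the $Q_8$ case is sound and essentially the paper's argument reorganised: the paper works directly on the total-degree-$5$ diagonal of the rational Serre spectral sequence for $F \to L \to M^6$, where $E_2^{2,3}\cong\Q^{2b_2(M^6)}$ is the only nonzero entry, and $H^5(L;\Q)=\ker(d_4^{2,3})=\Q$ with $\rank(d_4^{2,3})\leq 1$ gives $b_2(M^6)\leq 1$ at once, dispensing with your intermediate degree-$4$ estimate and its refinement.

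Your treatment of the $\Z\oplus\Z_2$ case, however, has genuine gaps. You split according to whether $\pi_1(B_+)$ is finite or infinite and reach for twisted-coefficient Gysin sequences, Mayer--Vietoris and Euler characteristics, but several load-bearing steps are asserted without justification. The Gysin comparison only gives $b_2(B_+;\Q)\leq b_2(B_-;\Q)-1$ (from $H^2(B_+;\Q)\hookrightarrow H^2(L;\Q)$ and $H^2(L;\Q)\cong H^2(B_-;\Q)/\langle e_-\rangle$); upgrading to the equality $b_2(B_-)=b_2(B_+)+1$ requires control of $\ker\bigl(H^1(B_+;\mathcal O)\to H^3(B_+;\Q)\bigr)$, which you do not supply. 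The assertion that the edge map $E_\infty^{2,0}\hookrightarrow H^2(L;\Q)$ ``together with'' the Euler-characteristic identity and $b_2(B_\pm)\leq b_2(\tilde B_\pm)$ ``forces $b_2(M^6)\leq 2$'' is left as a black box. And in the subcase with $\pi_1(B_+)$ infinite, $b_1(L;\Q)=1$ only gives vanishing of the \emph{rational} Euler class of $L\to B_-$, not triviality of the circle bundle, so ``$L\cong B_-\times\sph^1$'' overstates what you have. The paper avoids all of this: using $H^*(F;\Q)\cong H^*(\sph^1\times\sph^3\times\Omega\sph^5;\Q)$, the total-degree-$5$ diagonal of the same spectral sequence has nonzero entries $E_2^{0,5}=\Q$, $E_2^{2,3}=\Q^{b_2(M^6)}$, $E_2^{4,1}=\Q^{b_2(M^6)}$, and summing the ranks of all differentials into and out of these entries against $H^5(L;\Q)=\Q$ yields $1\geq 2b_2(M^6)-3$, hence $b_2(M^6)\leq 2$ with no case split. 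I would carry out the $\Z\oplus\Z_2$ case by this direct spectral-sequence rank count rather than the Gysin/Euler-characteristic route.
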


% PROOF

\begin{proof}
Observe first that, by Table \ref{table:Qtype}, the hypothesis that at least one of the bundles $\sph^1 \to L \to B_\pm$ is non-orientable is equivalent to $\pi_1(F)$ being either $Q_8$ or $\Z \oplus \Z_2$.  Therefore, by Lemma \ref{L:pi1B}, some $B \in \{B_\pm\}$ has finite fundamental group.  By Lemma \ref{L:fundgpF}, if $b_2(M^6) - \rank(\pi_1(F)) \leq 1$, then $\rank(\pi_2(B)) \leq 2$.  In this case, the Hurewicz and Universal Coefficient Theorems ensure that the universal cover $\tilde B$ of $B$ has $H_2( \tilde B) = \pi_2(\tilde B) = \pi_2(B)$ free abelian of rank at most $2$.  From the classification of smooth, closed, simply connected $4$-manifolds \cite{Fr}, it now follows that $\tilde B$ is homeomorphic to one of $\sph^4$, $\CP^2$, $\sph^2 \x \sph^2$ or $\CP^2 \# \pm \CP^2$ and, hence, rationally elliptic.  As $\pi_j(\tilde B) = \pi_j(B)$ for all $j \geq 2$, the rational ellipticity of $M^6$ now follows from Lemma \ref{L:high_enough}.

It remains, therefore, to show that the inequality $b_2(M^6) - \rank(\pi_1(F)) > 1$ is not possible under the present hypotheses.  To this end, note that the integral homology of $F$ has been determined in Table 1.5 of \cite{GH}.  It is a simple application of the Universal Coefficient Theorem to compute the rational cohomology groups of $F$, namely,
\begin{align}
\label{E:FQ8}
H^j(F; \Q) &= 
\begin{cases}
\Q, & j = 0, \\
\Q^2, & j >0 \text{ and } j \equiv 0 \!\!\!\mod 3, \\
0, & \text{otherwise}
\end{cases}
\qquad \text{ if } \pi_1(F) = Q_8 \,,
\\
H^j(F; \Q) &= 
\begin{cases}
\Q, & j = 0 \text{ or } j \text{ odd}, \\
\Q^2, & j >0 \text{ and } j \equiv 0 \!\!\!\mod 4, \\
0, & \text{otherwise}
\end{cases} 
\qquad \text{ if } \pi_1(F) = \Z \oplus \Z_2 \,.
\end{align}

Consider the rational Serre spectral sequence $(E_j,d_j)$ associated to the homotopy fibration $F \to L \to M^6$.   In particular, $H^5(L; \Q) = \bigoplus_{k+l = 5} E_\infty^{k,l}$.  On the other hand, as $L$ is a codimension-$1$ submanifold of the closed, simply connected $6$-manifold $M^6$, it is orientable and, hence, has $H^5(L; \Q) = \Q$; see, for example, \cite[p.~107]{Hi}.  These facts will place restrictions on the Betti numbers of $M^6$.  For convenience, denote by $d_j^{k,l}$ the differential $d_j \colon E_j^{k,l} \to E_j^{k+j, l+1-j}$ and by $\Delta_j^5$ the diagonal $\{E_j^{k,l} \mid k + l = 5\}$ on the $E_j$-page of $(E_j,d_j)$.

If $\pi_1(F) = Q_8$, the only non-trivial entry on the diagonal $\Delta_2^5$ is 
\[
E_2^{2,3} = H^2(M^6; H^3(F; \Q)) = H^2(M^6; \Q^2) = \Q^{2 \, b_2(M^6)} .
\]
As $d_4^{2,3} = (d_4\colon E_4^{2,3} = E_2^{2,3} \to E_4^{6,0} = \Q)$ is the only possible non-trivial differential on any page which involves $E_2^{2,3}$, it follows that $\Q = H^5(L; \Q) = \ker(d_4^{2,3}) \In \Q^{2 \, b_2(M^6)}$.  However, since $\rank(d_4^{2,3}) \leq 1$, one concludes that 
\[
2 \, b_2(M^6) - 1 \leq 2 \, b_2(M^6) - \rank(d_4^{2,3}) = \dim(\ker(d_4^{2,3})) = 1 \leq 2 \, b_2(M^6) ,
\]
which immediately yields $b_2(M^6) - \rank(\pi_1(F)) = b_2(M^6) = 1$.

Suppose now that $\pi_1(F) = \Z \oplus \Z_2$.  The non-trivial entries on the diagonal $\Delta_2^5$ consist of $E_2^{0,5} = \Q$, $E_2^{2,3} = \Q^{b_2(M^6)}$ and $E_2^{4,1} = \Q^{b_2(M^6)}$.  By considering all possible differentials which have these entries as either domain or range, one obtains that the contribution of $E_2^{0,5}$ to $H^5(L; \Q) = \Q$ has rank 
\[
1 - \rank(d_2^{0,5}) - \rank(d_3^{0,5}) - \rank(d_6^{0,5}), 
\]
while the contribution of $E_2^{2,3}$ has rank
\[
b_2(M^6) - \rank(d_2^{0,4}) - \rank(d_4^{2,3})
\]
and $E_2^{4,1}$ contributes rank
\[
b_2(M^6) - \rank(d_2^{4,1}) - \rank(d_4^{0,4}).
\]
Therefore, the rank of $H^5(L; \Q) = \Q$ is given by
\begin{align*}
1 &= (1 - \rank(d_2^{0,5}) - \rank(d_3^{0,5}) - \rank(d_6^{0,5})) \\
&\hspace*{15mm} + (b_2(M^6) - \rank(d_2^{0,4}) - \rank(d_4^{2,3})) \\
&\hspace*{15mm} + (b_2(M^6) - \rank(d_2^{4,1}) - \rank(d_4^{0,4}))  \\
&= 1 + 2\, b_2(M^6) - (\rank(d_2^{0,5}) + \rank(d_3^{0,5}) + \rank(d_6^{0,5})) \\
&\hspace*{15mm} - (\rank(d_2^{0,4}) + \rank(d_4^{0,4})) \\
&\hspace*{15mm} - (\rank(d_2^{4,1}) + \rank(d_4^{2,3})) .
\end{align*}
Now, since $E_2^{0,4} = \Q^2$, $E_2^{0,5} = \Q$ and $E_2^{6,0} = \Q$, this implies
\[
%2 \, b_2(M^6) + 1 \geq 
1 \geq 2 \, b_2(M^6) + 1 - 1 - 2 - 1 = 2 \, b_2(M^6) - 3,
\]
which immediately yields $b_2(M^6) \leq 2$, hence, $b_2(M^6) - \rank(\pi_1(F)) \leq 1$.
\end{proof}

As a consequence of Theorem \ref{T:nonoriented}, one can make the following additional general observation, which may be useful in its own right.

% COROLLARY

\begin{cor}
\label{C:b2big}
Suppose that the singular leaves $B_\pm$ in the double disk-bundle decomposition of $M^6$ are both of codimension two and that $b_2(M^6) \geq 3$.  Then both of the bundles $\sph^1 \to L \to B_\pm$ are orientable.
\end{cor}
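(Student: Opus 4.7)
The plan is to derive Corollary \ref{C:b2big} directly from Theorem \ref{T:nonoriented} via its contrapositive, since the corollary's hypotheses match exactly the setting of that theorem.

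Specifically, I would argue as follows. Suppose, for contradiction, that at least one of the sphere bundles $\sph^1 \to L \to B_\pm$ is non-orientable, while the hypotheses of the corollary are in force (namely, $B_\pm$ both of codimension two and $b_2(M^6) \geq 3$). Then the hypotheses of Theorem \ref{T:nonoriented} are satisfied, so its conclusion applies: $M^6$ is rationally elliptic and $b_2(M^6) \leq 2$. But this directly contradicts the assumption that $b_2(M^6) \geq 3$. Hence both bundles $\sph^1 \to L \to B_\pm$ must be orientable.

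The argument is essentially a one-line contrapositive, so there is no serious obstacle to overcome; all the work has already been done in the proof of Theorem \ref{T:nonoriented}. The only thing worth emphasizing in the write-up is that the codimension-two assumption on both $B_\pm$ is carried through from the setup of the theorem, and that the rational ellipticity half of Theorem \ref{T:nonoriented}'s conclusion is not even needed here — only the Betti number bound $b_2(M^6) \leq 2$ is invoked.
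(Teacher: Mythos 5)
Your proposal is correct and coincides with the paper's approach: the paper presents Corollary \ref{C:b2big} explicitly ``as a consequence of Theorem \ref{T:nonoriented}'' without further argument, and the contrapositive reading you give (using only the Betti-number half of that theorem's conclusion) is precisely what is intended.
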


In the case where each singular leaf is of codimension two and has infinite fundamental group, a classification up to diffeomorphism is achieved.  In particular, decomposing $\sph^3 \x \sph^3$ via the well-known decomposition of one factor into two solid tori is precisely of this form, having singular leaves diffeomorphic to $\sph^1 \x \sph^3$.

% THM

\begin{thm}
\label{T:infinitePi1}
Suppose that the singular leaves $B_\pm$ in the double disk-bundle decomposition of $M^6$ are both of codimension two, with $\rank(\pi_1(B_\pm)) \geq 1$.  Then $M^6$ is diffeomorphic to $\sph^3 \x \sph^3$ and, hence, rationally elliptic.
\end{thm}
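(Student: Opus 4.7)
The strategy is to compute $H_*(M^6;\Z)$ via Mayer--Vietoris and conclude by the classification of $2$-connected closed smooth $6$-manifolds.

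First, I would pin down the fundamental groups. Since both $\pi_1(B_\pm)$ are infinite, Lemma \ref{L:pi1B} yields $\rk(\pi_1(L)) \geq 2$, which propagates to $\rk(\pi_1(F)) \geq 2$ via the surjection $\pi_1(F) \twoheadrightarrow \pi_1(L)$ arising from $\pi_1(M^6) = 0$. Inspecting Table \ref{table:Qtype} at $\ell_\pm = 1$, the only option with rank $\geq 2$ is $\pi_1(F) \cong \Z^2$; this same row forces $F \simeq_\Q \sph^1 \x \sph^1 \x \Omega \sph^3$, both bundles orientable, and (by ranks) $\pi_1(L) \cong \Z^2$ with $\pi_1(F) \to \pi_1(L)$ an isomorphism. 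Since $M^6$ is simply connected, the equation (3.7) of \cite{GH} (used in the proof of Lemma \ref{L:pi1B}) forces the images $\Z\gamma_\pm$ of $\pi_1(\sph^1) \to \pi_1(L) = \Z^2$ to generate $\Z^2$. If either $\gamma_\pm$ failed to be primitive, say $\gamma_- = k\delta$ with $k \geq 2$, then $\<\gamma_-, \gamma_+\>$ would have index divisible by $k$ in $\Z^2$, a contradiction. Hence $\{\gamma_-, \gamma_+\}$ is a $\Z$-basis of $\pi_1(L)$, and Theorem \ref{T:triv} gives that both orientable circle bundles are trivial: $L \cong B_\pm \x \sph^1$ and $\pi_1(B_\pm) = \Z$.

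Next, I would determine the integral homology of the pieces. Proposition \ref{P:PDmiddle} gives $\bar L \simeq_\Q \sph^3$, hence $L \simeq_\Q T^2 \x \sph^3$, and triviality of the bundles forces $B_\pm \simeq_\Q \sph^1 \x \sph^3$. Combined with $\pi_1(B_\pm) = \Z$, Poincar\'e duality on the closed orientable $4$-manifold $B_\pm$ together with the Universal Coefficient Theorem force $H_2(B_\pm;\Z)$ to be torsion-free; being rationally trivial, it vanishes. Thus $H_*(B_\pm; \Z) \cong H_*(\sph^1 \x \sph^3; \Z)$, and K\"unneth gives $H_*(L; \Z) \cong H_*(\sph^3 \x T^2; \Z)$. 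In the Mayer--Vietoris sequence for $M^6 = DB_- \cup_L DB_+$, the map $H_1(L) = \Z^2 \to H_1(B_-) \oplus H_1(B_+) = \Z \oplus \Z$ is an isomorphism (because $\gamma_-, \gamma_+$ form a basis, so the kernels of the two projections intersect trivially and span $\pi_1(L)$); tracking the other differentials via the two K\"unneth decompositions $L \cong B_\pm \x \sph^1$ then yields $H_*(M^6; \Z) \cong H_*(\sph^3 \x \sph^3; \Z)$.

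Finally, classification concludes the proof. Since $H_2(M^6) = 0$ and $\pi_1(M^6) = 0$, Hurewicz makes $M^6$ $2$-connected. By Smale's classification of closed smooth $2$-connected $6$-manifolds as connected sums of copies of $\sph^3 \x \sph^3$, and since $b_3(M^6) = 2$, we obtain $M^6 \cong \sph^3 \x \sph^3$, from which rational ellipticity is immediate.

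\textbf{Main obstacle.} The most delicate part is the integral homology computation: verifying $H_2(B_\pm) = 0$ integrally via Poincar\'e duality and UCT, and carefully tracking the Mayer--Vietoris differentials through the two different K\"unneth decompositions of $L$. The fundamental-group analysis and triviality of the circle bundles follow relatively directly from the machinery developed in Section \ref{S:top}.
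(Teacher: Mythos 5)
Your argument follows the same overall strategy as the paper: use Lemma \ref{L:pi1B} and Table \ref{table:Qtype} to pin down $\pi_1(F) = \pi_1(L) = \Z^2$, combine equation (3.7) of \cite{GH} with Theorem \ref{T:triv} to trivialize both circle bundles (so $\pi_1(B_\pm) = \Z$), bring Proposition \ref{P:PDmiddle} to bear on the regular leaf to kill $b_2(B_\pm)$, compute $H_*(M^6)$ via Mayer--Vietoris, and conclude via the classification of closed simply connected smooth $6$-manifolds. The remaining differences (homology versus cohomology Mayer--Vietoris, Smale's $2$-connected classification versus the paper's citation of Jupp/Wall/Zhubr, your redundant primitivity check for $\gamma_\pm$) are cosmetic.

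The one place you should be more careful is the inference ``$\ol L \simeq_\Q \sph^3$, hence $L \simeq_\Q T^2 \times \sph^3$, and triviality of the bundles forces $B_\pm \simeq_\Q \sph^1 \times \sph^3$.'' Since $\pi_1(L) = \Z^2$ is torsion-free abelian, $\ol L$ is the universal cover of $L$, and the structure theory of minimal models for nilpotent spaces in Section \ref{S:Prelims} only says the minimal model of $L$ has the form $\bigl(\wedge(x_1,x_2,y_3),\,d\bigr)$ with $dx_i = 0$ and $dy_3 = a\,x_1 y_3 + b\,x_2 y_3$; you still need an argument that $a = b = 0$. One route is the nilpotency of the Sullivan differential (property (4) in Section \ref{S:Prelims}), which is incompatible with any $y_3$-dependence in $dy_3$; another is Poincar\'e duality on the closed, orientable $5$-manifold $L$, since $(a,b) \neq (0,0)$ would force $H^5(L;\Q) = 0$. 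Similarly, triviality of the bundles only gives the diffeomorphism $L \cong B_\pm \times \sph^1$; to speak of the rational homotopy type of $B_\pm$, and later to apply Poincar\'e duality and the Universal Coefficient Theorem to it, one needs $B_\pm$ to be nilpotent (Lemma \ref{L:nilp}, which you never invoke, or as a retract of the nilpotent $L$) and orientable (which follows from orientability of $L$ and of the circle bundle). The paper avoids the stronger assertion by extracting only $\pi_2^\Q(B_\pm) = 0$ via rational Hurewicz on $\ol L$ and then reading $b_2(B_\pm) = 0$ directly from degrees $\leq 2$ of the minimal model of the nilpotent space $B_\pm$, which is exactly what the Mayer--Vietoris computation requires; you should either do the same or supply the nilpotency/duality argument explicitly.
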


% PROOF

\begin{proof}
Since $B_\pm$ have infinite fundamental groups, Lemma \ref{L:pi1B}, together with Table~\ref{table:Qtype}, implies that $\pi_1(F) = \pi_1(L) = \Z^2$.  Furthermore, recall that, since $\pi_1(M^6) = 0$, equation~(3.7) of \cite{GH} implies that $\pi_1(L)$ is generated by the images of the homomorphisms $\pi_1(\sph^1) \to \pi_1(L)$ in the long exact homotopy sequences for the bundles $\sph^1 \to L \to B_\pm$. 
Therefore, $\pi_1(B_\pm) = \Z$ and $\pi_2(L) = \pi_2(B_\pm)$.  By applying the Hurewicz and Universal Coefficient Theorems, it follows in addition that $H^2(B_\pm)$ is free abelian.

By Table \ref{table:Qtype}, the circle bundles $\sph^1 \to L \to B_\pm$ are orientable.  Thus, by Proposition 6.15 of \cite{Mo}, these are principal $\sph^1$-bundles and, therefore, are determined by their Euler classes $e_\pm \in H^2(B_\pm)$.  Moreover, as $M^6$ is simply connected, the regular leaf $L$ is also orientable.  
Altogether, this implies that the $4$-manifolds $B_\pm$ are orientable and, in particular, satisfy Poincar\'e duality. 

By Theorem \ref{T:triv}, the bundles $\sph^1 \to L \to B_\pm$ are trivial, that is, $L \cong \sph^1 \x B_\pm$ and $e_\pm = 0$.  The respective Gysin sequences then yield, in addition, that $H^2(L ) = \Z^{b_2(B_\pm) + 1}$; in particular, this implies that $b_2(B_-) = b_2(B_+)$.

By Proposition \ref{P:PDmiddle}, the maximal free abelian cover $\ol L$ of $L$ satisfies $H^*(\ol L; \Q) \cong H^*(\sph^3; \Q)$.  Since $\pi_2(L) = \pi_2(B_\pm)$, it thus follows from the rational Hurewicz Theorem that 
$$
\rk(\pi_2(B_\pm)) = \rk(\pi_2(L)) = \rk(\pi_2(\ol L)) = b_2(\ol L) = 0 \,.
$$ 
Therefore, by Lemma \ref{L:nilp}, $B_\pm$ are nilpotent spaces with $\pi_1^\Q(B_\pm) = \Q$ and $\pi_2^\Q(B_\pm) = 0$.  From their minimal models, it follows that $b_2(B_\pm) = 0$ and, being free abelian, that $H^2(B_\pm) = 0$.

The Mayer--Vietoris sequence corresponding to the double disk-bundle decomposition of $M^6$ now yields 
\[
0 \to H^1(B_-) \oplus H^1(B_+) = \Z^2  \to H^1(L) = \Z^2 \to H^2(M^6) = \Z^{b_2(M^6)} \to 0 \,.
\]
Since $H^2(M^6)$ is free abelian, the injection $H^1(B_-) \oplus H^1(B_+) = \Z^2 \to H^1(L) = \Z^2$ is an isomorphism, from which one concludes that $H^2(M^6) = 0$.  Furthermore, from
\[
0 \to H^2(L) = \Z \to H^3(M^6) \to H^3(B_-) \oplus H^3(B_+) = \Z^2 \to \cdots 
\]
it is clear that $H^3(M^6)$ is free abelian of rank $0 < b_3(M^6) \leq 3$.  However, being a $6$-manifold, $b_3(M^6)$ must be even.  Therefore, $H^3(M^6) = \Z^2$ and, from the classification of closed, simply connected smooth $6$-manifolds \cite{Ju, Wa, Zh}, it follows that $M^6$ is diffeomorphic to $\sph^3 \x \sph^3$.
\end{proof}

All the ingredients necessary to prove Theorem~\ref{T:thmA} in dimension six are now in place.

% THM (THM E DIM 6)

\begin{thm}
\label{T:dim6}
Let $M^6$ be a closed, smooth, simply connected $6$-manifold with second Betti number $b_2(M^6) \leq 3$ which admits a double disk-bundle decomposition.
Then $M^6$ is rationally elliptic.
\end{thm}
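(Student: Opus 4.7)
The plan is to dispatch the theorem by successively peeling off cases using the structural results of Section~\ref{S:top}. By Proposition~\ref{P:connected}, I may assume $M^6$ admits a double disk-bundle decomposition $DB_- \cup_L DB_+$ with $B_\pm$ connected, and by Proposition~\ref{P:exceptional} both singular leaves have codimension at least two. If some $B \in \{B_\pm\}$ has codimension $\geq 3$, that is, $\dim(B) \leq 3$, then Proposition~\ref{P:L_BOUND} immediately gives rational ellipticity. So I may assume that both singular leaves have codimension two, i.e., $\ell_\pm = 1$, which by Table~\ref{table:Qtype} forces $\pi_1(F) \in \{Q_8, \Z \oplus \Z_2, \Z^2\}$.

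If $\pi_1(F) \in \{Q_8, \Z \oplus \Z_2\}$ — equivalently, at least one of the circle bundles $\sph^1 \to L \to B_\pm$ is non-orientable — Theorem~\ref{T:nonoriented} furnishes both rational ellipticity and the bound $b_2(M^6) \leq 2$, with nothing further to prove. The heart of the argument is therefore the remaining case $\pi_1(F) = \Z^2$, where both circle bundles are orientable. Here I will split according to the fundamental groups of the singular leaves: when both $B_\pm$ have infinite fundamental group, Theorem~\ref{T:infinitePi1} identifies $M^6$ with $\sph^3 \x \sph^3$, which is rationally elliptic.

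The subtle sub-case is $\pi_1(F) = \Z^2$ with some $B \in \{B_\pm\}$ of finite fundamental group. Here I will invoke Lemma~\ref{L:fundgpF} to obtain
\[
2 + \rank(\pi_2(B)) = \rank(\pi_1(F)) + \rank(\pi_2(B)) = b_2(M^6) + 1 \leq 4,
\]
so that $\rank(\pi_2(B)) \leq 2$. Passing to the universal cover $\tilde B$, a closed, simply connected smooth $4$-manifold, the Hurewicz and Universal Coefficient Theorems give $H_2(\tilde B) \cong \pi_2(\tilde B) = \pi_2(B)$, a free abelian group of rank at most two. Freedman's classification \cite{Fr} then places $\tilde B$ on the finite list $\{\sph^4, \CP^2, \sph^2 \x \sph^2, \CP^2 \# \pm \CP^2\}$, all of which are rationally elliptic. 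Since $\pi_j(B) = \pi_j(\tilde B)$ for $j \geq 2$, the rational homotopy groups of $B$ vanish in sufficiently high degrees, and Lemma~\ref{L:high_enough} concludes that $M^6$ is rationally elliptic.

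The main obstacle will be ensuring the case analysis is exhaustive, particularly checking that in the sub-case $\pi_1(F) = \Z^2$ with some $B$ of finite $\pi_1$, the inequality $b_2(M^6) \leq 3$ yields a rank bound on $\pi_2(B)$ tight enough to land inside the Freedman list; the assumption $b_2(M^6) \leq 3$ is used precisely once, at this step, and this is the only place in the proof where the hypothesis is invoked.
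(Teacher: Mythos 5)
Your proof is correct and follows essentially the same route as the paper: the paper's argument also reduces via Proposition~\ref{P:L_BOUND} to codimension-two singular leaves, then splits into the case $\pi_1(F) = \Z^2$ with some $\pi_1(B)$ finite (handled via Lemma~\ref{L:fundgpF} plus the Freedman classification argument lifted from Theorem~\ref{T:nonoriented}) and delegates the remaining cases to Theorems~\ref{T:nonoriented} and \ref{T:infinitePi1}. The only difference is cosmetic: you organize the cases by the value of $\pi_1(F)$ and then by $\pi_1(B_\pm)$, whereas the paper states the case distinctions more tersely, but the logical content and the point at which $b_2(M^6) \leq 3$ is invoked are identical.
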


% PROOF

\begin{proof}
By Proposition \ref{P:L_BOUND}, it suffices to consider singular leaves $B_\pm$ of codimension two.  Suppose that there is some $B \in \{B_\pm\}$ with finite fundamental group.  If both of the bundles $\sph^1 \to L \to B_\pm$ are orientable, that is, if $\pi_1(F) = \Z^2$, then, by Lemma~\ref{L:fundgpF} together with the hypothesis $b_2(M^6) \leq 3$, it is clear that $\rank(\pi_2(B)) = b_2(M^6) - 1 \leq 2$.  As in the proof of Theorem~\ref{T:nonoriented}, it follows that $M^6$ is rationally elliptic.  All remaining cases have been dealt with in Theorems~\ref{T:nonoriented} and \ref{T:infinitePi1}, thus completing the proof.
\end{proof}

% REMARK

\begin{remark}
\label{r:6hyp}
Notice that the hypothesis $b_2(M^6) \leq 3$ has been used in only one scenario, namely, in the case where the singular leaves $B_\pm$ are of codimension $2$, at least one of $\pi_1(B_\pm)$ is finite and the bundles $\sph^1 \to L \to B_\pm$ are both orientable, that is, $\pi_1(F) = \Z^2$ by Table \ref{table:Qtype}.  In all other cases, assuming only that $M^6$ admits a double disk-bundle decomposition ensures that $M^6$ is rationally elliptic.
\end{remark}

In light of Theorem~\ref{T:infinitePi1}, it is tempting to seek a classification up to diffeomorphism of rationally elliptic $6$-manifolds which admit a double disk-bundle decomposition.  However, as suggested by the work in \cite{He}, such a classification seems out of reach at present.  Nevertheless, imposing further restrictions on the Betti numbers allows one to make some progress. 

% THM

\begin{thm}
\label{T:smoothS6}
Let $M^6$ be a closed, smooth, simply connected $6$-manifold with $H^*(M^6; \Q) = H^*(\sph^6; \Q)$  which admits a double disk-bundle decomposition. Then $M^6$ is diffeomorphic to $\sph^6$.
\end{thm}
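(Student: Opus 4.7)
The plan is to establish that $M^6$ is diffeomorphic to $\sph^6$ by first reducing the problem to integer cohomology and then exploiting the double disk-bundle decomposition. Since $M^6$ is a smooth, closed, simply connected $6$-manifold and $\Theta_6 = 0$, the smooth Poincar\'e conjecture in dimension six shows that it suffices to prove $M^6$ has the integer cohomology of $\sph^6$. The hypothesis $H^*(M^6; \Q) \cong H^*(\sph^6; \Q)$, combined with simple connectivity, Poincar\'e duality, and the Universal Coefficient Theorem, yields $H_4(M^6) = 0$ and $H_3(M^6) \cong \mathrm{Tor}(H_2(M^6))$, so the whole statement reduces to proving $H^3(M^6) = 0$, which will be extracted from the integer Mayer--Vietoris sequence associated to $M^6 = DB_- \cup_L DB_+$.

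By Theorem \ref{T:dim6} (applicable since $b_2(M^6) = 0 \leq 3$), $M^6$ is rationally elliptic. Taking $B_\pm$ connected (Proposition \ref{P:connected}) and of codimension at least two (Proposition \ref{P:exceptional}), Theorem \ref{T:infinitePi1} rules out the case where both $B_\pm$ have codimension two with some $\pi_1(B_\pm)$ infinite, since this would force $M^6 \cong \sph^3 \times \sph^3$, which is not a rational $\sph^6$. If instead both $B_\pm$ have codimension two and finite fundamental group, Lemma \ref{L:fundgpF} gives $\rank(\pi_1(F)) + \rank(\pi_2(B_\pm)) = 1$. In the subcase $\pi_1(F) = Q_8$, each universal cover $\tilde B_\pm$ is a closed simply connected $4$-manifold with $b_2 = 1$, hence homeomorphic to $\CP^2$ or $\overline{\CP}^2$ by Freedman; a Lefschetz fixed-point argument then forbids non-trivial free finite group actions on these spaces, so $B_\pm = \tilde B_\pm$ is simply connected, forcing every $\sph^1$-bundle over $B_\pm$ to be orientable, in contradiction with the non-orientability required by Table \ref{table:Qtype}. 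Hence $\pi_1(F) = \Z \oplus \Z_2$, the universal covers satisfy $b_2(\tilde B_\pm) = 0$, so $\tilde B_\pm$ is homeomorphic to $\sph^4$ and $B_\pm \in \{\sph^4, \RP^4\}$. The remaining case, in which some $B \in \{B_\pm\}$ has codimension at least three (so $\dim B \leq 3$), is handled analogously: as in Proposition \ref{P:L_BOUND}, $B$ is finitely covered by one of a short list of low-dimensional manifolds with simple cohomology.

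With $B_\pm$ and the sphere bundles $\sph^{\ell_\pm} \to L \to B_\pm$ essentially pinned down in each surviving configuration, the closing step is a direct computation of the integer Mayer--Vietoris sequence
\[
\cdots \to H^2(B_-) \oplus H^2(B_+) \to H^2(L) \to H^3(M^6) \to H^3(B_-) \oplus H^3(B_+) \to \cdots
\]
verifying that $H^3(M^6) = 0$. The main obstacle will be the invisibility of torsion to rational arguments: rational ellipticity and the Betti-number hypothesis do not directly control the integer cohomology of $L$, and a careful tracking of the Euler classes of the relevant sphere bundles --- in particular those $\sph^1$-bundles whose Euler classes live in $\Z_2$-summands, such as the bundles over $\RP^4$ --- will be required to show both that $H^2(B_-) \oplus H^2(B_+) \to H^2(L)$ is surjective and that the image of $H^3(M^6)$ in $H^3(B_-) \oplus H^3(B_+)$ vanishes.
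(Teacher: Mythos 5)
Your proposal diverges from the paper's argument at a crucial point and, as written, cannot be completed. The paper's proof hinges on Theorem \ref{T:NoRank}: since $M^6 \simeq_\Q \sph^6$ has rational homotopy only in degrees $6$ and $11$, the connecting homomorphism $\partial\colon \pi_{s+1}^\Q(M^6) \to \pi_s^\Q(F)$ can only be non-trivial if $s \in \{5, 10\}$. Consulting Table \ref{table:Qtype}, this forces $\{\ell_\pm\}$ to be one of $\{1,4\}$, $\{2,3\}$ or $\{5,5\}$, and in particular \emph{rules out} every codimension-two configuration (for which $s \in \{2,4,6\}$), as well as most of the higher-codimension ones. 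You never invoke Theorem \ref{T:NoRank}, and the cost is that your case analysis proceeds down a path that does not exist: you argue at length that in the $\pi_1(F) = \Z \oplus \Z_2$ subcase the singular leaves are (up to homeomorphism) $\sph^4$ and $\RP^4$, and then propose to finish by a Mayer--Vietoris computation for this configuration. But this configuration is vacuous. One can see this without Theorem \ref{T:NoRank} too: in that case the maximal free abelian cover $\ol L$ (Proposition \ref{P:PDmiddle}) is a simply connected rational Poincar\'e duality space of formal dimension $4$ with $b_2(\ol L) = \rank \pi_2(L) = 0$, hence $\ol L \simeq_\Q \sph^4$ and $\pi_3^\Q(L) = \pi_3^\Q(\ol L) = 0$; but the long exact sequence for $F \to L \to M^6$ with $\pi_3^\Q(F) = \Q$ and $\pi_3^\Q(M^6) = 0$ forces $\pi_3^\Q(L) = \Q$, a contradiction. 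So the "careful tracking of Euler classes" you defer to the end would be an attempt to verify $H^3(M^6) = 0$ for a double disk bundle that cannot arise.

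Beyond this, the proposal never actually reaches the cases that \emph{do} occur. The cases $\{\ell_\pm\} = \{1,4\}$ and $\{2,3\}$ — which are exactly what the paper's proof must handle, and which require specific arguments (that $L$ is an $\sph^4$-bundle over $\sph^1$ with $\pi_2(L) = 0$ in the first case; that $B_\pm = \{\sph^2, \sph^3\}$, $L$ has the cohomology of $\sph^3 \times \sph^2$, and a short Mayer--Vietoris computation in the second) — are subsumed under the single sentence "handled analogously" with no further detail. The trivial case $\ell_\pm = 5$, where $M^6$ is a twisted double of $D^6$ and hence already a sphere, is also not mentioned. Your reduction to smooth Poincar\'e, and hence to $H^3(M^6) = 0$, is correct, and your Lefschetz/Freedman argument for eliminating the $Q_8$ subcase is a cute alternative to the paper's blanket use of Theorem \ref{T:NoRank}; but without Theorem \ref{T:NoRank} (or an explicit substitute) you cannot identify the actual surviving configurations, and the body of the argument remains unproven.
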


% PROOF

\begin{proof}
By Smale's 
resolution of the Generalized Poinc\'are Conjecture \cite{Sm2}, it suffices to show that $M^6$ is an integral (co)homology sphere.  Since $M^6$ is simply connected, it is clear from the Universal Coefficient Theorem and Poincar\'e duality that $H^j(M^6) = 0$, for $j \in \{1,2,5\}$, and that $H^3(M^6) \cong H^4(M^6)$.  Therefore, it is enough to show that the torsion group $H^3(M^6) \cong H^4(M^6)$ is trivial.

By Theorem~\ref{T:dim6}, $M^6$ is rationally elliptic.  Thus, by the rational Hurewicz Theorem and the relations \eqref{E:degrels}, the only non-trivial rational homotopy groups of $M^6$ are $\pi_6^\Q(M^6) = \Q$ and $\pi_{11}^\Q(M^6) = \Q$.  From Theorem \ref{T:NoRank}, it now follows that %(a finite cover of) 
the homotopy fiber $F$ of the inclusion $L \to M^6$ has a loop-space factor $\Omega \sph^6$ or $\Omega \sph^{11}$.  From Table \ref{table:Qtype}, together with the fact that $1 \leq \ell_\pm \leq 5$, this implies that  
$F$ is rationally homotopy equivalent to $\sph^{\ell_-} \x \sph^{\ell_+} \x \Omega \sph^{\ell_- + \ell_+ + 1}$, with $\{\ell_\pm\} = \{1,4\}$ or $\{2,3\}$, or else to $\sph^{5} \x \sph^{5} \x \Omega \sph^{11} \simeq_\Q \sph^5 \x \Omega \sph^6$, 
with $\ell_\pm = 5$.

Observe, however, that $\ell_\pm = 5$ implies that the singular leaves $B_\pm$ are points and, hence, that $M^6$ is the union of two $6$-dimensional disks.  Consequently, in this case $M^6$ is homeomorphic and, thus, diffeomorphic to $\sph^6$.  

Suppose, on the other hand, that $\{\ell_\pm\} = \{1,4\}$.  From Table \ref{table:Qtype} it follows that $\pi_1(F) = \Z$, while the Hurewicz Theorem and Poincar\'e duality ensure that $\pi_2(M^6) \cong H_2(M^6) \cong H^4(M^6)$ is torsion.  Therefore, the homomorphism $\pi_2(L) \to \pi_2(M^6)$ in the long exact homotopy sequence for $F \to L \to M^6$ must be surjective.  On the other hand, since one of the singular leaves $B_\pm$ is a connected, codimension-five submanifold of $M^6$, the regular leaf $L$ is an $\sph^4$-bundle over $\sph^1$.  In particular, the long exact homotopy sequence for this bundle yields $\pi_2(L) = 0$ and, hence, $H^4(M^6) \cong \pi_2(M^6) = 0$, as desired.

Suppose, finally, that $\{\ell_\pm\} = \{2,3\}$.  By Table \ref{table:Qtype}, $\pi_1(F) = 0$ and the bundles $\sph^{\ell_\pm} \to L \to B_\pm$ are both orientable.  It follows from the long exact homotopy sequence for $F \to L \to M^6$ that $\pi_1(L) = 0$, while the long exact homotopy sequences for $\sph^{\ell_\pm} \to L \to B_\pm$ yield $\pi_1(B_\pm) = 0$.  Since $\{\ell_\pm\} = \{2,3\}$, the classification of surfaces and Perelman's resolution of the Poincar\'e Conjecture \cite{Pe1, Pe2, Pe3} imply $\{B_\pm\} = \{\sph^2, \sph^3\}$.  Hence, $L$ is the total space of orientable bundles $\sph^3 \to L \to \sph^2$ and $\sph^2 \to L \to \sph^3$.  In particular, from the Gysin sequence for $\sph^3 \to L \to \sph^2$, the regular leaf $L$ has the same cohomology as $\sph^3 \x \sph^2$ and the bundle projection induces an isomorphism $H^2(\sph^2) \to H^2(L)$.  Since $\{B_\pm\} = \{\sph^2, \sph^3\}$ and $H^3(M^6)$ is torsion, applying this observation to the Mayer--Vietoris sequence for the decomposition $DB_- \cup_L DB_+$ of $M^6$ now yields
\[
H^2(M^6) = 0 \to H^2(B_-) \oplus H^2(B_+) = \Z \stackrel{\cong}{\lra} H^2(L) = \Z \to H^3(M^6) \to 0,
\]
from which it follows that $H^3(M^6) = 0$, as desired.
\end{proof}

Recall that Wall's Splitting Theorem \cite{Wa} implies that every closed, smooth, simply connected $6$-manifold $M^6$ splits as a connected sum $M^6_0 \# M^6_1$, where $M^6_0$ has finite $H^3(M^6_0)$ and $M^6_1$ is a connected sum of $b_3(M^6)/2$ copies of $\sph^3 \x \sph^3$.  As a consequence of the following theorem, if such a manifold is rationally hyperbolic and admits a double disk-bundle decomposition, then $b_3(M^6) = 0$.

% THM

\begin{thm}
\label{T:boom}
Let $M^6$ be a closed, smooth, simply connected $6$-manifold with 
$b_3(M^6) \neq 0$ which admits a double disk-bundle decomposition.  Then $M^6$ is diffeomorphic to $\sph^3 \x \sph^3$.
\end{thm}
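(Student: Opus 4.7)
The plan is to reduce to Theorem~\ref{T:infinitePi1}, which handles the case of codimension-two singular leaves with infinite fundamental group, by showing that under the hypothesis $b_3(M^6) \neq 0$ this is the only configuration that can arise. The argument will rely throughout on the following consequence of the Friedlander-Halperin constraints for rationally elliptic spaces: any simply connected, closed, rationally elliptic $6$-manifold with $b_3 \neq 0$ is rationally homotopy equivalent to $\sph^3 \x \sph^3$, so that $\pi_k^{\Q}(M^6) = 0$ for $k \neq 3$. This follows from Halperin's theorem ($\chi=0$ since $b_3 \neq 0$, giving $b_3 = 2 + 2 b_2$) combined with the bound $\sum 2i\, d_{2i}(M^6) \leq 6$ and a short case analysis that excludes $b_2(M^6) \in \{1,2,3\}$ through the formal dimension identity.

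The first step would be to reduce to $\ell_\pm = 1$. Suppose some $\ell \geq 2$. By Proposition~\ref{P:L_BOUND}, $M^6$ is rationally elliptic and hence rationally $\sph^3 \x \sph^3$, so Theorem~\ref{T:NoRank} forces $s = 2$; the loop-space factor of $F$ must be $\Omega \sph^3$. Consulting Table~\ref{table:Qtype} subject to $\ell \geq 2$, this occurs only in Row 7 with $\alpha = 2$, giving $\ell_\pm = 2$ and $F \simeq_\Q \sph^2 \x \Omega \sph^3$. The long exact homotopy sequences together with the Poincaré Conjecture yield $B_\pm \cong \sph^3$, and since every $\sph^2$-bundle over $\sph^3$ is trivial (as $\pi_2(\SO(3)) = 0$), $L \cong \sph^2 \x \sph^3$. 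A direct Mayer-Vietoris computation then gives $H^*(M^6;\Z) \cong H^*(\sph^3 \x \sph^3; \Z)$, and the Wall-Jupp-Zhubr classification \cite{Ju, Wa, Zh} concludes $M^6 \cong \sph^3 \x \sph^3$.

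Assuming thereafter $\ell_\pm = 1$, the next step would be to verify that both circle bundles are orientable. Table~\ref{table:Qtype} presents $\pi_1(F) \in \{\Z^2, \Z \oplus \Z_2, Q_8\}$ with associated loop-space factors $\Omega \sph^k$ for $k \in \{3, 5, 7\}$. If $\pi_1(F) \neq \Z^2$, Theorem~\ref{T:nonoriented} makes $M^6$ rationally elliptic, hence rationally $\sph^3 \x \sph^3$; but then $\pi_{s+1}^{\Q}(M^6) = 0$ for $s \in \{4, 6\}$, contradicting Theorem~\ref{T:NoRank}. Hence $\pi_1(F) = \Z^2$.

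The final step would be to rule out $\pi_1(B)$ being finite for either $B \in \{B_\pm\}$. Assume for contradiction that $\pi_1(B_-)$ is finite. Lemma~\ref{L:pi1B} gives $\rank(\pi_1(L)) \leq 1$, and since $\pi_1(L)$ is abelian as a quotient of $\Z^2$, the long exact homotopy sequence for $\sph^1 \to L \to B_+$ forces $\rank(\pi_1(B_+)) \leq 1$. As $L$ is orientable (being a codimension-one submanifold of the simply connected $M^6$) and both circle bundles are orientable, both $B_\pm$ are orientable closed $4$-manifolds, so Poincaré duality together with $H^1(B_-; \Q) = 0$ gives $H^3(B_-; \Q) = 0$ and $\dim H^3(B_+; \Q) \leq 1$. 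The Gysin sequence for $L \to B_-$ shows $\pi_-^*\colon H^2(B_-; \Q) \twoheadrightarrow H^2(L; \Q)$, so the rational Mayer-Vietoris sequence for $M^6 = DB_- \cup_L DB_+$ collapses to an injection $H^3(M^6; \Q) \hookrightarrow H^3(B_+; \Q)$, whence $b_3(M^6) \leq 1$. Since $b_3(M^6)$ must be even (the cup-product pairing on $H^3(M^6; \Q)$ is a non-degenerate antisymmetric form on the closed orientable $6$-manifold), this forces $b_3(M^6) = 0$, contradicting the hypothesis. Hence both $\pi_1(B_\pm)$ are infinite, and Theorem~\ref{T:infinitePi1} delivers $M^6 \cong \sph^3 \x \sph^3$. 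The main obstacle I expect is cleanly verifying the rational ellipticity input that opens the argument; the subsequent reductions then flow directly from Theorems~\ref{T:NoRank}, \ref{T:nonoriented}, and \ref{T:infinitePi1} together with the Gysin and Mayer-Vietoris sequences.
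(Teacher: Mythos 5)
Your proposal is correct and relies on the same key ingredients as the paper's proof (Theorem~\ref{T:NoRank} together with Table~\ref{table:Qtype}, the Pavlov classification of rationally elliptic $6$-manifolds, Theorems~\ref{T:nonoriented} and~\ref{T:infinitePi1}, the Gysin/Mayer--Vietoris sequences, and the Wall--Jupp--Zhubr classification). The only differences are organizational: you split by the codimension type $\ell_\pm$ and $\pi_1(F)$ up front, whereas the paper first disposes of the rationally hyperbolic case globally and then splits the rationally elliptic case; and in the $b_3$ bound of your step 3 you argue via Gysin plus Mayer--Vietoris, where the paper uses excision and Poincar\'e--Lefschetz duality on the pair $(M^6, DB_\pm)$ — but the resulting inequality $b_3(M^6) \leq 1$ is the same.
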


\begin{proof}
Suppose first that $M^6$ is rationally hyperbolic.  By Theorem \ref{T:dim6} and Remark \ref{r:6hyp}, this is possible only if the singular leaves $B_\pm$ are both of codimension two, at least one of $\pi_1(B_\pm)$ is finite, the bundles $\sph^1 \to L \to B_\pm$ are both orientable, and $\pi_1(F) = \Z^2$.  By \cite[p.~107]{Hi}, $L$ is orientable and, therefore, so too are $B_\pm$.  As such, $B_\pm$ both satisfy Poincar\'e duality and, hence, $b_1(B_\pm) = b_3(B_\pm)$.

Now, recall that, by excision and Poincar\'e-Lefschetz duality, there are isomorphisms $H^j(M^6, DB_\pm) \cong H^j(DB_\mp, L) \cong H_{6-j}(B_\mp)$ for all $j \geq 0$.  Therefore, from the portion 
$$
\dots \to H^3(M^6, DB_\pm) \to H^3(M^6) \to H^3(B_\pm) \to \cdots
$$
of the long exact sequence for the pair $(M^6, DB_\pm)$ it follows that
$$
b_3(M^6) \leq b_1(B_-) + b_1(B_+) \leq 1\,,
$$
where the final inequality follows from Lemma \ref{L:pi1B} and the long exact homotopy sequences for $\sph^1 \to L \to B_\pm$.  However, since $b_3(M^6)$ must be even, one concludes that $b_3(M^6) = 0$, a contradiction.

Assume, therefore, that $M^6$ is rationally elliptic.  Since $b_3(M^6) \neq 0$, by the work of Pavlov \cite{Pav} (see also \cite{He}), $M^6$ must be rationally homotopy equivalent to $\sph^3 \x \sph^3$.  In particular, $\rk(\pi_2(M^6)) = 0$ and the only non-trivial rational homotopy group is $\pi_3^\Q(M^6) = \Q^2$.   
By Theorem \ref{T:NoRank},  
the homotopy fiber $F$ of the inclusion $L \to M^6$ has a loop-space factor $\Omega \sph^2$ or $\Omega \sph^3$.  By Table \ref{table:Qtype}, only $\Omega \sph^3$ is possible and there are only two possible scenarios: either $\ell_\pm = 1$ and $\pi_1(F) = \Z^2$, or else $\ell_\pm = 2$, $\pi_1(F) = 0$ and $F \simeq_\Q \sph^2 \x \Omega \sph^3$.

Suppose $\ell_\pm = 1$ and $\pi_1(F) = \Z^2$.  Since $\rk(\pi_2(M^6)) = 0$, 
 the long exact homotopy sequence for $F \to L \to M^6$ yields %, it follows 
 that $\pi_1(L) = \pi_1(F) = \Z^2$.  Applying equation (3.7) of \cite{GH} to the long exact homotopy sequences for $\sph^1 \to L \to B_\pm$ now yields that $\pi_1(B_\pm) = \Z$.  By Theorem \ref{T:infinitePi1}, it follows that $M^6$ is diffeomorphic to $\sph^3 \x \sph^3$.

Suppose, on the other hand, that $\ell_\pm = 2$, $\pi_1(F) = 0$ and $F \simeq_\Q \sph^2 \x \Omega \sph^3$.  By Table \ref{table:Qtype}, the bundles $\sph^2 \to L \to B_\pm$ are both orientable and it follows from the long exact homotopy sequences for $F \to L \to M^6$ and $\sph^{\ell_\pm} \to L \to B_\pm$ that $\pi_1(B_\pm) = \pi_1(L) = 0$.  Being closed, simply connected $3$-manifolds, it follows from Perelman's resolution of the Poincar\'e Conjecture \cite{Pe1, Pe2, Pe3} that $B_\pm = \sph^3$.  Therefore, by the Gysin sequence, $L$ has the integral cohomology of $\sph^3 \x \sph^2$.  The Mayer--Vietoris sequence for the decomposition $DB_- \cup_L DB_+$ of $M^6$ now gives
$$
H^2(B_-) \oplus H^2(B_+) = 0 \to H^2(L) = \Z \to H^3(M^6) \to H^2(B_-) \oplus H^2(B_+) = \Z^2 \to \cdots.
$$
Therefore, by exactness, $H^3(M^6)$ is free abelian, that is, $H^3(M^6) = \Z^2$.  Since $H^*(M^6; \Q) = H^*(\sph^3 \x \sph^3; \Q)$, it follows from the Universal Coefficient Theorem and Poincar\'e duality that $M^6$ is an integral cohomology $\sph^3 \x \sph^3$.  By the classification of closed, smooth, simply connected $6$-manifolds \cite{Ju,Wa,Zh}, it follows that $M^6$ is diffeomorphic to $\sph^3 \x \sph^3$.
\end{proof}

Theorems \ref{T:smoothS6} and \ref{T:boom} together give a characterization of all $6$-dimensional double disk bundles with vanishing second Betti number.

% COROLLARY

\begin{cor}
Let $M^6$ be a closed, smooth, simply connected $6$-manifold with $b_2(M^6) = 0$ which admits a double disk-bundle decomposition.  Then $M^6$ is diffeomorphic to either $\sph^6$ or $\sph^3 \x \sph^3$.
\end{cor}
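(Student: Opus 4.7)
The plan is a short case split on the third Betti number, invoking the two preceding theorems directly.

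First, I would observe that since $M^6$ is a closed, smooth, simply connected (hence orientable) $6$-manifold, Poincar\'e duality gives $b_k(M^6) = b_{6-k}(M^6)$ for all $k$, and $b_0(M^6) = b_6(M^6) = 1$. Simple connectivity forces $b_1(M^6) = b_5(M^6) = 0$, and the hypothesis $b_2(M^6) = 0$ together with Poincar\'e duality forces $b_4(M^6) = 0$.

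Next, I would split on whether $b_3(M^6) = 0$. If $b_3(M^6) = 0$, then all Betti numbers of $M^6$ agree with those of $\sph^6$, so $H^*(M^6; \Q) \cong H^*(\sph^6; \Q)$. Theorem \ref{T:smoothS6} then immediately yields $M^6 \cong \sph^6$. If instead $b_3(M^6) \neq 0$, Theorem \ref{T:boom} directly gives $M^6 \cong \sph^3 \x \sph^3$.

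There is essentially no obstacle here, as both required results are already proved. The only thing worth remarking is that the case $b_3(M^6) \neq 0$ is consistent with $b_2(M^6) = 0$, since $\sph^3 \x \sph^3$ indeed satisfies both, so the conclusion is non-vacuous in each branch. Thus a two-sentence proof suffices.
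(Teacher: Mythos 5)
Your proof is correct and is essentially identical to the paper's: both observe via Poincar\'e duality and simple connectivity that $b_3(M^6)$ is the only possibly nonzero Betti number in degrees $1$ through $5$, and then apply Theorem \ref{T:smoothS6} when $b_3 = 0$ and Theorem \ref{T:boom} when $b_3 \neq 0$.
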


\begin{proof}
By Poincar\'e duality, the only possible non-trivial Betti number is $b_3(M^6)$.  The result now follows easily from 
Theorems \ref{T:smoothS6} 
and \ref{T:boom}.
\end{proof}

%----------------------------------------
%			 	DIMENSION 7
%----------------------------------------

\section{Double disk bundles in dimension \texorpdfstring{$7$}{7}}
\label{S:dim7}

In this section, 
Theorem~\ref{T:thmA} will be proven in dimension seven via a careful analysis of all possible cases.   
Throughout, $M^7$ will denote a smooth, closed, simply connected $7$-manifold which admits a double disk-bundle decomposition $DB_- \cup_L DB_+$ with $B_\pm$ connected.  As before, let $F$ denote the homotopy fiber of the inclusion $L \to M^7$.

By Proposition \ref{P:L_BOUND}, $M^7$ is rationally elliptic whenever one of $B_\pm$ is of codimension $\geq 4$.  Therefore, together with Proposition \ref{P:exceptional}, it may be assumed that the fibers of the bundles $\sph^{\ell_\pm} \to L \to B_\pm$ satisfy $1 \leq \ell_\pm \leq 2$.  

% %THM

\begin{thm}
\label{P:rk0}
If the bundles $\sph^{\ell_\pm} \to L \to B_\pm$ are both non-orientable, then $M^7$ is rationally elliptic.
\end{thm}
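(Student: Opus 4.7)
The plan is to identify precisely which case of Table \ref{table:Qtype} is selected by the hypothesis, use the resulting knowledge of $H^*(F; \Q)$ to constrain the Betti numbers of $M^7$ via the rational Serre spectral sequence for $F \to L \to M^7$, and conclude by invoking intrinsic formality of products of spheres. Inspection of the table reveals that both sphere bundles being non-orientable uniquely selects the row $\pi_1(F) = Q_8$, $F \simeq_\Q \sph^3 \x \sph^3 \x \Omega \sph^7$, with $\ell_\pm = 1$, so $H^*(F; \Q)$ is one-dimensional in degree $0$, two-dimensional in every degree $3k$ with $k \geq 1$, and zero otherwise.

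Next I would extract constraints on $L$. The long exact sequence for $F \to L \to M^7$ shows that $\pi_1(L)$ is a quotient of the finite group $Q_8$, so $b_1(L) = 0$; since $L$ is a codimension-one submanifold of the simply connected $M^7$, it is orientable, hence Poincar\'e duality yields $b_5(L) = 0$, while $b_7(L) = 0$ by dimension. The decisive feature of the rational Serre spectral sequence $E_2^{p,q} = H^p(M^7; \Q) \otimes H^q(F; \Q) \Rightarrow H^{p+q}(L; \Q)$ is that the concentration of $H^q(F; \Q)$ in degrees $\equiv 0 \pmod 3$ restricts potentially non-trivial differentials to pages $r \equiv 1 \pmod 3$, that is, $d_4, d_7, \ldots$.

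On the diagonal $p + q = 5$, a brief check will confirm that only $E_2^{5, 0} = \Q^{b_2(M^7)}$ and $E_2^{2, 3} = \Q^{2 b_2(M^7)}$ are non-zero, and that all candidate differentials touching them vanish for degree reasons; both entries therefore survive to $E_\infty$, and $b_5(L) = 0$ forces $b_2(M^7) = 0$. An analogous analysis on the diagonal $p + q = 7$ leaves $E_2^{7, 0} = \Q$ and $E_2^{4, 3} = \Q^{2 b_3(M^7)}$ as the only contributions, with the latter killed solely by $d_4 \colon E_4^{0, 6} = \Q^2 \to E_4^{4, 3}$. Since $b_7(L) = 0$, this differential must be surjective, so $2 b_3(M^7) \leq 2$, i.e.\ $b_3(M^7) \leq 1$.

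Having pinned down Betti numbers $(1, 0, 0, b_3, b_3, 0, 0, 1)$ with $b_3 \leq 1$, and observing via Poincar\'e duality that the pairing $H^3(M^7; \Q) \x H^4(M^7; \Q) \to H^7(M^7; \Q)$ is non-degenerate when $b_3 = 1$, the rational cohomology ring of $M^7$ is isomorphic to that of $\sph^7$ or $\sph^3 \x \sph^4$. Since both are products of spheres and hence intrinsically formal \cite{FH1}, one concludes $M^7 \simeq_\Q \sph^7$ or $M^7 \simeq_\Q \sph^3 \x \sph^4$, so that $M^7$ is rationally elliptic. The main technical obstacle is the careful bookkeeping in the Serre spectral sequence, namely verifying that the interfering differentials really do vanish purely from the gaps in $H^*(F; \Q)$ and the low dimensions of $M^7$ and $L$.
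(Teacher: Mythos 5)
Your proposal is correct and reaches the conclusion by a genuinely different route from the paper. Both arguments begin identically: Table \ref{table:Qtype} forces $\ell_\pm = 1$, $\pi_1(F) = Q_8$ and $F \simeq_\Q \sph^3 \x \sph^3 \x \Omega \sph^7$, and the diagonal $p+q=5$ of the Serre spectral sequence for $F \to L \to M^7$, combined with $b_5(L) = b_1(L) = 0$, gives $b_2(M^7) = 0$. From there the paper works with $L$: it uses the diagonals $p+q = 2,3$ together with Poincar\'e duality on $L$ and $M^7$ to show $H^*(L;\Q) \cong H^*(\sph^3 \x \sph^3; \Q)$, invokes intrinsic formality of products of spheres to get $L \simeq_\Q \sph^3 \x \sph^3$, and concludes via Lemma \ref{L:high_enough}. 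You instead stay with $M^7$: the diagonal $p+q = 7$ (using $H^7(L;\Q) = 0$ since $\dim L = 6$, and checking that the only differential into $E^{4,3}$ is $d_4^{0,6}$ from $E_4^{0,6} = \Q^2$, and that no differential leaves $E^{4,3}$) forces $2b_4(M^7) \leq 2$, hence $b_3(M^7) \leq 1$ by Poincar\'e duality, and then the resulting Betti numbers plus the non-degeneracy of the pairing $H^3 \x H^4 \to H^7$ identify $H^*(M^7;\Q)$ as that of $\sph^7$ or $\sph^3 \x \sph^4$, so intrinsic formality of products of spheres applies directly to $M^7$. Your route is a bit more direct and produces the rational homotopy type of $M^7$ explicitly (which is also the content of the remark preceding Theorem \ref{P:codim23}), at the cost of a slightly more delicate check that $d_4^{0,6}$ is the unique differential affecting $E^{4,3}$; the paper's route is marginally longer but fits more smoothly into the general Lemma \ref{L:high_enough} framework used throughout Sections \ref{S:dim5}--\ref{S:dim7}. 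Both are valid; the minor bookkeeping point in your write-up is that $E_2^{4,3}$ is more naturally $\Q^{2 b_4(M^7)}$ and equals $\Q^{2 b_3(M^7)}$ only after invoking Poincar\'e duality, which you do implicitly.
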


% PROOF

\begin{proof}
By Table \ref{table:Qtype}, the hypothesis is equivalent to taking $\ell_\pm = 1$, $\pi_1(F) = Q_8$ and  
$F$ to be rationally homotopy equivalent to $\sph^3 \x \sph^3 \x \Omega \sph^7$.  From the long exact homotopy sequence for $F \to L \to M^7$, this implies, in particular, that $\pi_1(L)$ is finite.

Now, consider the Serre spectral sequence $(E_j, d_j)$ associated to $F\to L\to M^7$, where the rational cohomology of $F$ is given by \eqref{E:FQ8}.  In particular,  
no non-trivial differential can hit either $E_2^{5,0} = H^5(M^7;\Q) \cong \Q^{b_2(M)}$ or $E_2^{2,3} = H^2(M^7; \Q) \ox H^3(F; \Q) \cong \Q^{2 \, b_2(M^7)}$.  Thus, these entries survive to the $E_\infty$-page and, being the only non-trivial entries on the diagonal $\{E_2^{k,l} \mid k + l = 5\}$, it follows that  $H^5(L; \Q) = \bigoplus_{k+l = 5} E_\infty^{k,l} \cong \Q^{3\, b_2(M^7)}$.  On the other hand, as $L$ is a codimension-one submanifold of the closed, simply connected manifold $M^7$, it is orientable by \cite[p.~107]{Hi}.  Therefore, $L$ satisfies Poincar\'e duality and, since $\pi_1(L)$ is finite, it follows that $H^5(L; \Q) = 0$.  Hence, $b_2(M^7) = 0$.

Consequently, all entries on the diagonal $\{E_2^{k,l} \mid k + l = 2\}$ are trivial, which in turn implies that $H^2(L; \Q) = 0$.   By Poincar\'e duality, it now follows that $H^4(L;\Q) = 0$.  Therefore, in the spectral sequence $(E_j, d_j)$, the differential $d_4^{0,3} : E_4^{0,3} \cong H^3(F; \Q) = \Q^2 \to E_4^{4,0} \cong H^4(M^7; \Q) = \Q^{b_4(M^7)}$ must be surjective, so $b_4(M^7) = 2 - \dim(\ker d_4^{0,3})$.  On the other hand, the only non-trivial entries on the diagonal $\{E_5^{k,l} \mid k + l = 3\}$ are $E_5^{0,3} \cong \Q^{\dim(\ker d_4^{0,3})}$ and $E_5^{3,0} \cong H^3(M^7; \Q) = \Q^{b_3(M^7)}$, and both of these survive to the $E_\infty$-page.  Therefore, $H^3(L;\Q) = \Q^{b_3(M^7) + \dim(\ker d_4^{0,3})}$.  However, by Poincar\'e duality, $b_3(M^7) = b_4(M^7)$ and, hence, 
$$
b_3(L) = b_3(M^7) + \dim(\ker d_4^{0,3}) = b_4(M^7) + \dim(\ker d_4^{0,3}) = 2.
$$
Altogether, these observations imply that $H^*(L;\Q) = H^*(\sph^3 \x \sph^3; \Q)$.  Since $L$ is nilpotent, by Theorem 1.3 of \cite{GH}, and $\pi_1(L)$ is finite, it has a (simply connected) minimal model.  Moreover, 
since a product of spheres is intrinsically formal, it follows that $L \simeq_\Q \sph^3 \x \sph^3$.    
This implies, in particular, that $\pi_j^\Q(L) %= \pi_j^\Q(\tilde L) 
= 0$ for all $j \geq 4$.  By Lemma \ref{L:high_enough}, it now follows that $M^7$ is rationally elliptic.
\end{proof}

% REMARK

\begin{remark}  
Notice that in Theorem \ref{P:rk0} there were no restrictions placed on $b_2(M^7)$.  Together with Theorem \ref{T:nonoriented} and Theorem \ref{T:Pf_Thm_A}, it follows that a closed, smooth, simply connected manifold of dimension $\leq 7$ that admits a double disk-bundle decomposition $DB_- \cup_L DB_+$ for which both of the bundles $\sph^{\ell_\pm} \to L \to B_\pm$ (equivalently, both of $B_\pm$) are non-orientable must be rationally elliptic.  This result is false in all dimensions $\geq 8$. %or higher.  
To see this, first observe that $\sph^4$ admits a well-known $\SO(3)$ action of cohomogeneity one, with singular orbits diffeomorphic to $\RP^2$.  Furthermore, in every dimension $\geq 4$, there exist infinitely many closed, smooth, simply connected, rationally hyperbolic manifolds.  If $N$ is one such manifold, it now follows from Proposition \ref{P:SuffCond}\eqref{i:base} that the closed, smooth, simply connected, rationally hyperbolic manifold $\sph^4 \x N$ admits a double disk-bundle decomposition with non-orientable singular leaves diffeomorphic to $\RP^2 \x N$.
\end{remark}

As a consequence of the standard decomposition of $\sph^3$ into a union of two solid tori, for every closed, simply connected, smooth $4$-manifold $X^4$ there is a double disk-bundle decomposition induced on the product $M^7 = X^4 \x \sph^3$ such that the bundles $\sph^1 \to L \to B_\pm$ are both orientable.  
In particular, if $X^4$ is rationally hyperbolic, then so too is $M^7$.  Moreover, as $b_2(\#_{k = 1}^n \CP^2) = n$, there are rationally hyperbolic manifolds $M^7 = X^4 \x \sph^3$ achieving every possible $b_2 (M^7) \geq 3$.  If one is interested in rational ellipticity in the case that both circle bundles are orientable, it turns out that $b_2(M^7)$ is the only obstruction.

% THM

\begin{thm}
\label{P:rk2}	
Suppose that  
the singular leaves $B_\pm$ are both of codimension two and that the bundles $\sph^1 \to L \to B_\pm$ are both orientable.  If $b_2(M^7) \leq 2$, then $M^7$ is rationally elliptic.
\end{thm}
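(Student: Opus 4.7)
The plan is to show that the regular leaf $L$ is rationally elliptic, whence $M^7$ will be so by Lemma~\ref{L:high_enough}. Under the hypotheses, Table~\ref{table:Qtype} forces $\pi_1(F) = \Z^2$ and $F \simeq_\Q \sph^1 \x \sph^1 \x \Omega\sph^3$, so $\pi_1^\Q(F) = \Q^2$, $\pi_2^\Q(F) = \Q$, and $\pi_j^\Q(F) = 0$ for all $j \geq 3$. Applying Theorem~\ref{T:NoRank} with $s = 2$, the connecting map $\pi_3^\Q(M^7) \to \pi_2^\Q(F)$ is surjective, and the rational long exact sequence of the homotopy fibration $F \to L \to M^7$ reduces in low degrees to
\[
0 \to \pi_2^\Q(L) \to \pi_2^\Q(M^7) \to \Q^2 \to \pi_1^\Q(L) \to 0,
\]
yielding the identity $\rank \pi_2(L) = b_2(M^7) + \rank \pi_1(L) - 2$.

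By Lemma~\ref{L:pi1B}, either both $\pi_1(B_\pm)$ are infinite (so that $\pi_1(L) = \Z^2$, as the only full-rank quotient of $\Z^2$) or else some $B \in \{B_\pm\}$ has finite $\pi_1$ (so that $\rank \pi_1(L) \leq 1$). In the first subcase, the opening steps of the proof of Theorem~\ref{T:infinitePi1} carry over verbatim to yield $\pi_1(B_\pm) = \Z$ and, via Theorem~\ref{T:triv}, the triviality of both circle bundles, whence $L \cong \sph^1 \x B_\pm$ and the universal cover $\tilde L \cong \R \x \tilde B$ is simply connected and, by Proposition~\ref{P:PDmiddle}, a rational Poincar\'e-duality space of formal dimension $6 - 2 = 4$, deformation retracting onto the simply connected 5-manifold $\tilde B$. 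Combining the Hurewicz theorem for $\tilde L$ with the identity above gives
\[
b_2(\tilde B) = \rank \pi_2(\tilde L) = \rank \pi_2(L) = b_2(M^7) \leq 2.
\]
Since $\tilde B$ has the rational cohomology ring of some closed simply connected 4-manifold $X$ with $b_2(X) \leq 2$, and any such $X$ is intrinsically formal by \cite{Mill}, it follows that $\tilde B \simeq_\Q X \in \{\sph^4, \CP^2, \sph^2 \x \sph^2, \CP^2 \# \pm \CP^2\}$, each of which is rationally elliptic.

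In the second subcase, adapt Lemma~\ref{L:fundgpF} to dimension seven: the long exact homotopy sequence for $\sph^1 \to L \to B$ gives $\rank \pi_2(B) = \rank \pi_2(L) + 1 - \rank \pi_1(L)$, and substitution of the above identity yields $\rank \pi_2(B) = b_2(M^7) - 1 \leq 1$. Since $\pi_1(B)$ is finite, the universal cover $\tilde B$ is a smooth, closed, simply connected 5-manifold with $b_2(\tilde B) \leq 1$; by \cite{Pav} it is rationally equivalent to $\sph^5$ or $\sph^3 \x \sph^2$, hence rationally elliptic. In either subcase, $B$ is rationally elliptic as a finite quotient of a rationally elliptic simply connected space, so $L$, being a sphere bundle over $B$, is rationally elliptic. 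The main subtleties will be, first, the dimension-seven adaptation of Lemma~\ref{L:fundgpF}, in which the 5-manifold $\tilde B$ may carry torsion in $H_2$ (in contrast to the 4-manifold appearing in the six-dimensional proof), though only rank-level information enters the argument; and second, the triviality of the circle bundles in the first subcase, which via Theorem~\ref{T:triv} reduces to showing that the two images of $\pi_1(\sph^1)$ in $\pi_1(L) = \Z^2$ are primitive, following (as in Theorem~\ref{T:infinitePi1}) from the fact that these images jointly generate $\pi_1(L)$ and hence form a $\Z$-basis.
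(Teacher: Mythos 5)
Your proof is correct and follows essentially the same approach as the paper's: Theorem \ref{T:NoRank}, the case split on $\rank \pi_1(L)$ via Lemma \ref{L:pi1B}, Proposition \ref{P:PDmiddle} together with Miller's intrinsic formality when $\pi_1(L) = \Z^2$, and the Barden--Smale classification of closed simply connected five-manifolds (via Lemma \ref{L:high_enough}) when some $\pi_1(B_\pm)$ is finite. The detour through Theorem \ref{T:triv} to trivialize the circle bundles in the first subcase is unnecessary---the paper applies Proposition \ref{P:PDmiddle} directly to $\ol L$, which is already simply connected since $\pi_1(L) = \Z^2$ is free abelian---and the attribution to Pavlov in the second subcase should really be to Barden--Smale together with the Hurewicz theorem and intrinsic formality of products of spheres, but neither point affects the correctness of the argument.
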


% PROOF

\begin{proof}
By Table \ref{table:Qtype}, the hypotheses are equivalent to letting $\pi_1(F) = \Z^2$.  Therefore, $F$ is rationally homotopy equivalent to $\sph^1 \x \sph^1 \x \Omega \sph^3$.

The long exact homotopy sequence for the homotopy fibration $F \to L \to M^7$ yields
\beq
\label{E:FLM}
\dots \to \pi_3(M^7) \to \pi_2(F) \to \pi_2(L) \to \pi_2(M^7) \to \pi_1(F) \to \pi_1(L) \to 0\,,
\eeq
from which it is clear that $\rank(\pi_1(L)) \leq 2$, with equality if and only if $\pi_1(L) = \Z^2$.

Assume first that $\pi_1(L) = \Z^2$.  
Theorem \ref{T:NoRank} and \eqref{E:FLM} 
together imply that $\rk(\pi_2(L)) = \rk(\pi_2(M^7))$ and hence, by the Hurewicz Theorem, that $\rk(\pi_2(L)) = b_2(M^7) \leq 2$.

By Proposition \ref{P:PDmiddle}, the rational cohomology ring of the maximal free abelian cover $\ol L$ of $L$ is isomorphic to that of a closed, simply connected, four-dimensional manifold $N^4$.  Given
$$
b_2(N^4) = b_2(\ol L) = \rk(\pi_2(\ol L)) = \rk(\pi_2(L)) \leq 2\,,
$$
it follows from Freedman's classification of smooth, closed, simply connected $4$-manifolds \cite{Fr} that $N^4$ is homeomorphic to one of $\sph^4$, $\CP^2$, $\sph^2 \x \sph^2$ or $\CP^2 \# \pm \CP^2$ and, hence, rationally elliptic.  Moreover, in \cite{Mill} (see also \cite{Lu}) Miller proved that, for all $k \geq 2$, if $X$ is a (rationally) $(k-1)$-connected space of formal dimension $\leq 4k-2$ with $H^*(X; \Q)$ satisfying Poincar\'e duality, then $X$ is intrinsically formal.  In the present setting, this implies that $\ol L$ is intrinsically formal and, hence, that its minimal model is isomorphic to that of the rationally elliptic space $N^4$.  Therefore, as $\pi_j(\ol L) = \pi_j(L)$ for all $j \geq 2$, the rational ellipticity of $M^7$ now follows from Lemma \ref{L:high_enough}.

Assume now that $\rank(\pi_1(L)) \leq 1$.  By Lemma \ref{L:pi1B}, at least one of $\pi_1(B_\pm)$ is finite.  Let $B \in \{B_\pm\}$ such that $\pi_1(B)$ is finite and let $\tilde B$ be its universal cover, a closed, smooth, simply connected $5$-manifold.  From the classification of Barden and Smale \cite{Ba,Sm3}, together with the Hurewicz Theorem, it follows that $\tilde B$, and therefore $B$, is rationally elliptic if $\rank(\pi_2(B)) = \rank(\pi_2(\tilde B)) = b_2(\tilde B) \leq 1$.  In this case, the rational ellipticity of $M^7$ follows immediately from Lemma \ref{L:high_enough}.

To establish that $\rank(\pi_2(B)) \leq 1$, observe that exactness in the long exact homotopy sequence for the bundle $\sph^1 \to L \to B$ yields
$$
\rank(\pi_2(B)) = 1 + \rank(\pi_2(L)) - \rank(\pi_1(L)) \,.
$$
On the other hand, exactness in \eqref{E:FLM}, together with Theorem \ref{T:NoRank} and the Hurewicz Theorem, yields
$$
 \rank(\pi_2(L)) - \rank(\pi_1(L)) = \rk(\pi_2(M^7)) - \rk(\pi_1(F)) = b_2(M^7) - 2\,.
$$
Since $b_2(M^7) \leq 2$ by hypothesis, these identities yield $\rank(\pi_2(B)) = b_2(M^7) - 1 \leq 1$, as desired.
\end{proof}

In the remaining case with singular leaves of codimension two to be discussed below, where exactly one of the singular leaves is orientable, it turns out there are no such double disk bundles whenever $b_2(M^7) \geq 3$.  An example of such a decomposition can be found on $\sph^3 \x \CP^2$ by taking advantage of the fact that $\CP^2$ decomposes as the union of disk bundles over $\sph^2$ and $\RP^2$ \cite{GeRa}.  

% THM

\begin{thm}
\label{P:rk1}	
Suppose that the singular leaves $B_\pm$ are both of codimension two and that exactly one of the bundles $\sph^1 \to L \to B_\pm$ is orientable.  Then $M^7$ is rationally elliptic.
\end{thm}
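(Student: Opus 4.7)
By Table~\ref{table:Qtype}, the hypothesis that exactly one of the circle bundles $\sph^1 \to L \to B_\pm$ is orientable is equivalent to $\pi_1(F) = \Z \oplus \Z_2$ and $F \simeq_\Q \sph^1 \x \sph^3 \x \Omega\sph^5$.  Notice that $F$ is itself rationally elliptic, with non-trivial rational homotopy concentrated in degrees $1$, $3$ and $4$.  Since $\rk(\pi_1(F)) = 1$, the long exact homotopy sequence for $F \to L \to M^7$ forces $\rk(\pi_1(L)) \leq 1$, so Lemma~\ref{L:pi1B} furnishes a singular leaf $B \in \{B_\pm\}$ whose fundamental group is finite.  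Its universal cover $\tilde B$ is thus a closed, smooth, simply connected $5$-manifold.

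A rank count analogous to the one carried out in the proof of Theorem~\ref{P:rk2}, combining the long exact homotopy sequence for $F \to L \to M^7$ (using $\pi_1^\Q(F) = \Q$ and $\pi_2^\Q(F) = 0$) with that of the bundle $\sph^1 \to L \to B$, yields
\[
\rk(\pi_2(\tilde B)) \;=\; \rk(\pi_2(B)) \;=\; b_2(M^7).
\]
By Pavlov's rational classification of closed, simply connected $5$-manifolds, $\tilde B$ is rationally elliptic precisely when $b_2(\tilde B) \leq 1$, and in that event Lemma~\ref{L:high_enough} yields the rational ellipticity of $M^7$.  The crux is therefore to establish the bound $b_2(M^7) \leq 1$.

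To achieve this, the plan is to exploit the rational Serre spectral sequence of the homotopy fibration $F \to L \to M^7$, using the explicit cohomology
\[
H^*(F;\Q) \;=\; \Lambda(a, b) \ox \Q[y], \qquad |a| = 1,\ |b| = 3,\ |y| = 4,
\]
together with the fact that $L$ is an orientable closed $6$-manifold (being codimension one in the simply connected $M^7$), whence Poincar\'e duality forces $\dim_\Q H^5(L;\Q) = \rk(\pi_1(L)) \leq 1$ and $\dim_\Q H^6(L;\Q) = 1$.  Theorem~\ref{T:NoRank} ensures that the transgression $\tau(y) \in H^5(M^7;\Q) = \Q^{b_2(M^7)}$ of the generator $y$ is non-trivial.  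Combining this with Poincar\'e duality on $M^7$ (so that $b_5 = b_2$ and $b_4 = b_3$) and with the analogous rank constraints on the diagonals $p + q = 6$ and $p + q = 7$ of the $E_2$-page should force the desired bound.

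The main obstacle is controlling the differentials emanating from the infinite tower of non-nilpotent classes $\{y^k\}_{k \geq 1} \In H^{4k}(F;\Q)$: every such power must ultimately be killed, either transgressively or by cancellation with incoming differentials, so that $L$ has finite-dimensional rational cohomology, and carefully tracking these cancellations degree-by-degree is the delicate part of the argument.  It may prove cleaner to distinguish the sub-cases $\rk(\pi_1(L)) = 0$ and $\rk(\pi_1(L)) = 1$, treating the latter through the maximal free abelian cover $\ol L$ furnished by Proposition~\ref{P:PDmiddle}, which is a nilpotent rational Poincar\'e-duality space of formal dimension $5$, to which Pavlov's classification then applies directly.
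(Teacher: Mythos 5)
Your setup is correct and matches the paper's: you correctly identify $\pi_1(F) = \Z\oplus\Z_2$ and $F \simeq_\Q \sph^1 \x \sph^3 \x \Omega\sph^5$, correctly deduce that some singular leaf $B$ has finite fundamental group, and correctly compute $\rk(\pi_2(B)) = b_2(M^7)$ from the two long exact homotopy sequences. You also correctly reduce the theorem to proving $b_2(M^7)\leq 1$. (One small slip: $F$ is \emph{not} rationally elliptic, since $H^*(\Omega\sph^5;\Q) = \Q[y]$ is infinite-dimensional; $F$ merely has finite total rational homotopy.)

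The genuine gap is that you never prove $b_2(M^7)\leq 1$. You explicitly defer the ``delicate part'' of tracking the differentials, and that deferred step is the entire content of the theorem. The paper's argument here is substantial: it argues by contradiction assuming $b_2(M^7)\geq 2$ and makes essential use of the orientability dichotomy between $B_-$ and $B_+$, which your sketch never engages with. Specifically, labelling $B_-$ as the base of the orientable circle bundle, the paper shows $H^1(B_-)=0$ (hence $\pi_1(B_-)$ finite) via excision and Poincar\'e--Lefschetz duality for the pair $(M^7, DB_-)$ --- this is stronger and more targeted than appealing to Lemma~\ref{L:pi1B}, because it pins down \emph{which} leaf to take the universal cover of, and hence which Gysin sequence to use. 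The orientable Gysin sequence then yields $b_3(L) = 2b_2(B_-) = 2b_2(M^7)\geq 4$, while the non-orientable side is used to show $b_1(L) = b_1(B_+)$ and to bound $b_5(L)$. The contradiction then comes from a careful case analysis on $b_3(M^7)$ ($\leq 1$ versus $\geq 2$) in the rational Serre spectral sequence of $F\to L\to M^7$, cross-referenced against Poincar\'e duality for $L$ in degrees $4$, $5$, $6$. None of this is in your proposal. Your suggested alternative via the maximal free abelian cover $\ol L$ does not provide an end-run: when $\rk(\pi_1(L))=1$ one still has $b_2(\ol L) = \rk(\pi_2(L)) = b_2(M^7)$, so the Pavlov/Miller formality argument applied to $\ol L$ would still require the very bound $b_2(M^7)\leq 1$ that you have not established.
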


% PROOF

\begin{proof}This proof will follow the same basic strategy used in the proof of Theorem \ref{T:nonoriented}, but the computation is significantly more involved.

By Table \ref{table:Qtype}, the hypotheses are equivalent to letting $\pi_1(F) = \Z \oplus \Z_2$ and, hence, that $F$ is rationally homotopy equivalent to $\sph^1 \x \sph^3 \x \Omega \sph^5$.  Observe now, using the long exact homotopy sequences for $F \to L \to M^7$ and $\sph^1 \to L \to B_\pm$, that $\pi_1(L)$ and $\pi_1(B_\pm)$ are abelian groups satisfying $\rk(\pi_1(B_\pm)) \leq \rk(\pi_1(L)) \leq \rk(\pi_1(F))= 1$. In particular, $H_1(L) \cong \pi_1(L)$ and $H_1(B_\pm) \cong \pi_1(B_\pm)$.

Without loss of generality, suppose that $\sph^1 \to L \to B_-$ is orientable and $\sph^1 \to L \to B_+$ is non-orientable.  By \cite[p.~107]{Hi}, $L$ is orientable and, hence, $B_-$ is a closed, orientable $5$-manifold, while $B_+$ is non-orientable.  In particular, $L$ and $B_-$ satisfy Poincar\'e duality, whereas $H_5(B_+) = 0$.

Consider the pairs $(M^7, DB_\pm)$ and $(DB_\pm, L)$.  By excision and Poincar\'e-Lefschetz duality, and recalling that $DB_\pm$ is homotopy equivalent to $B_\pm$, there are isomorphisms
\beq
\label{E:PLD}
H^j(M, DB_\pm) \cong H^j(DB_\mp, L) \cong H_{7-j}(B_\mp) \,,
\eeq
for all $j \geq 0$.  Thus, the portion
$$
\dots \to H^1(M) \to H^1 (B_-) \to H^2(M, DB_-) \to \cdots
$$
of the long exact sequence for the pair $(M^7, DB_-)$ yields that $H^1(B_-) = 0$.  Applying the Universal Coefficient Theorem, it may be deduced that $\pi_1(B_-) \cong H_1(B_-)$ is finite.  

As a result, the universal cover $\tilde B_-$ of $B_-$ is a closed, simply connected $5$-manifold.  By the classification of Barden and Smale \cite{Ba,Sm3}, together with the Hurewicz Theorem, it follows that $\tilde B_-$, and therefore $B_-$, is rationally elliptic provided that $\rank(\pi_2(B_-)) = \rank(\pi_2(\tilde B_-)) = b_2(\tilde B_-) \leq 1$.  In this case, the rational ellipticity of $M^7$ follows immediately from Lemma \ref{L:high_enough}.

To show that $M^7$ is rationally elliptic, it therefore suffices to show that $\rk(\pi_2(B_-)) \leq 1$.  To this end, assume instead that $\rk(\pi_2(B_-)) \geq 2$.  It will be demonstrated, by placing the focus on $b_3(M^7)$, that this assumption leads to a contradiction.  Some initial setup is required.

Observe that the long exact sequence for the pair $(DB_+, L)$ yields
$$
\dots\to H^5(B_+) \to H^5 (L) \to H^6(DB_+, L) \to H^6(B_+) = 0 \,.
$$
By the Universal Coefficient Theorem, $H^5(B_+)$ must be finite.  Therefore, it follows from Poincar\'e duality and \eqref{E:PLD} that 
\beq
\label{E:pi1LB}
\rk(\pi_1(L)) = b_1(L) = b_5(L) = b_1(B_+) = \rk(\pi_1(B_+))\,.
\eeq

Since $\sph^1 \to L \to B_-$ is orientable, there exists a Gysin sequence
$$
\dots \to H^{j+1}(L) \to H^j(B_-) \to H^{j+2}(B_-) \to H^{j+2}(L) \to H^{j+1}(B_-) \to \cdots
$$
and, because $b_4(B_-) = b_1(B_-) = 0$ and $b_2(B_-) = b_3(B_-)$ by Poincar\'e duality, it follows that
\beq
\label{E:b3L}
b_3(L) = 2 \, b_2(B_-) = 2 (b_2(L) - b_1(L) + 1)\,.
\eeq 

Since $F \simeq_\Q \sph^1 \x \sph^3 \x \Omega \sph^5$, one obtains from the long exact homotopy sequences for $F \to L \to M^7$ and $\sph^1 \to L \to B_\pm$ that
\beq
\label{E:b2M}
\begin{split}
b_2(M^7) = \rk(\pi_2(M^7)) &= \rk(\pi_2(L)) - \rk(\pi_1(L)) + 1 \\
&= \rk(\pi_2(B_\pm)) - \rk(\pi_1(B_\pm))
\end{split}
\eeq
and, hence, that $b_2(M^7) = \rk(\pi_2(B_-)) \geq 2$.

Consider now the rational Serre spectral sequence $(E_j, d_j)$ associated to the homotopy fibration $F \to L \to M^7$, where the rational cohomology of $F$ is given by \eqref{E:FQ8} and the $E_2$-page is shown in Figure \ref{F:specseq}.   Recall that $E_2^{k,l} = E_2^{k,0} \ox E_2^{0,l}$ for all $k, l \geq 0$.  As in the proof of Theorem \ref{T:nonoriented}, it is convenient to denote by $d_j^{k,l}$ the differential $d_j : E_j^{k,l} \to E_j^{k+j, l+1-j}$ and by $\Delta_j^m$ the diagonal $\{E_j^{k,l} \mid k + l = m\}$ on the $E_j$-page of $(E_j,d_j)$.
\begin{figure}
\centering
\begin{tikzpicture}
  \matrix (m) [ampersand replacement=\&, matrix of math nodes,
    nodes in empty cells,nodes={minimum width=5ex,
    minimum height=5ex,outer sep=-5pt},
    column sep=1ex,row sep=1ex]{
                   \&       \&    \&    \&   \&   \&   \&   \&   \&    \\
                F  \&   4    \&  \Q^2   \&  0   \&   \&   \&   \&   \&   \&    \\
                \&   3    \&  \Q   \&  0   \& \Q^{b_2(M^7)}  \& \Q^{b_3(M^7)}  \& \Q^{b_3(M^7)}  \& \Q^{b_2(M^7)}  \&  0 \&    \\
                \&   2   \&  0   \&   0  \& 0  \&  0 \&  0 \&  0 \& 0  \&    \\
                \&   1   \&  \Q   \&   0  \& \Q^{b_2(M^7)}  \& \Q^{b_3(M^7)}  \& \Q^{b_3(M^7)}  \& \Q^{b_2(M^7)}  \&  0 \&    \\
                \&   0   \&  \Q   \&  0   \& \Q^{b_2(M^7)}  \& \Q^{b_3(M^7)}  \& \Q^{b_3(M^7)}  \& \Q^{b_2(M^7)}  \&  0  \&    \\
                \&   \quad\strut   \&  0    \&  1   \&  2  \& 3  \&  4 \&  5 \&  6  \&  \strut \quad M^7 \\
                 };
\draw[thick] (m-1-2.east) -- (m-7-2.east) ;
\draw[thick] (m-7-2.north) -- (m-7-10.north) ;
\end{tikzpicture}
\caption{$E_2$-page of spectral sequence for $F \to L \to M^7$}
\label{F:specseq}
\end{figure}

From the differential $d_2^{0,1} = (d_2 : E_2^{0,1} = \Q \to E_2^{2,0} = \Q^{b_2(M^7)})$ and \eqref{E:b2M} it is clear that 
\beq
\label{E:b2L}
b_2(L) = b_2(M^7) - (1 - b_1(L)) = b_2(M^7) - 1 + \rk(\pi_1(L)) = \rk(\pi_2(L))
\eeq
and, therefore, by combining equations \eqref{E:b3L} and \eqref{E:b2M}, that
\beq
\label{E:b3LM}
b_3(L) = 2\, b_2(M^7) \geq 4\,.
\eeq

Suppose first that $b_3(M^7) \leq 1$.  If $b_3(M^7) = 0$, then, for all $j \geq 2$, all differentials involving terms along the diagonal $\Delta_j^3$ are trivial and, hence, $H^3(L; \Q) = \bigoplus_{k + l = 3} E_\infty^{k,l} = \Q^{b_2(M^7) + 1}$.  However, since $b_2(M^7) \geq 2$, this implies that $b_3(L) = b_2(M^7) + 1 < 2 \, b_2(M^7)$, contradicting the inequality \eqref{E:b3LM}.

If, on the other hand, $b_3(M^7) = 1$, then, by \eqref{E:b3LM},
$$
4 \leq 2 \, b_2(M^7) = b_3(L) \leq \rk\left( \bigoplus_{k + l = 3} E_2^{k,l}  \right) = b_2(M^7) + 2 \,,
$$
from which it follows that $b_2(M^7) = 2$ and $b_3(L) = 4$.  Moreover, this implies that, for all $j \geq 2$, all differentials involving terms along the diagonal $\Delta_j^3$ are trivial.  As a result, $H^4(L; \Q)$ is entirely determined by the kernels of differentials with domain along the diagonal $\Delta_j^4$.  The product rule implies that $\rk(d_2^{3,1}) \leq \rk(d_2^{0,1}) = 1 - b_1(L)$, while the total rank $r$ of all differentials $d_j^{0,4}$, $j \geq 2$, is clearly at most $2$.  Therefore, Poincar\'e duality and \eqref{E:b2L} together yield 
$$
1 + b_1(L) = b_2(L) = b_4(L) \geq 4 - \rk(d_2^{3,1}) - r \geq 1 + b_1(L)
$$
and, hence, the identities $\rk(d_2^{3,1}) = 1 - b_1(L)$ and $ r = 2$.

Given that $d_j^{0,3}$ is trivial for all $j \geq 2$, it follows from the product rule that $d_j^{2,3}$ is also trivial for all $j \geq 2$.  Therefore, the differentials with domain in $W_j = E_j^{2,3} \oplus E_j^{4,1} \oplus E_j^{0,5}$ are trivial for all $j \geq 2$, meaning that the contribution of $W_2 = \Q^5$ to $H^5(L; \Q)$ has rank $5 -  \rk(d_2^{3,1}) - r = 2 + b_1(L)$.  However, since $b_5(L) = b_1(L) \leq 1$ by Poincar\'e duality, this is impossible.

Suppose, finally, that $b_3(M^7) \geq 2$.  From the ring structure of $H^*(F; \Q) = H^*(\sph^1 \x \sph^3 \x \Omega \sph^5; \Q)$, it is clear that $\rk(d_2^{0,1}) = 1 - b_1(L)$ and $d_2^{0,3} = 0$ together imply that $\rk(d_2^{0,4}) \geq 1 - b_1(L)$.  Thus, $E_4^{0,4} = E_3^{0,4}$ has rank $\leq 1 + b_1(L)$ and, consequently, the image of $d_4^{0,4}$ has rank $\leq 1 + b_1(L)$ in $E_4^{4,1}$.  As $d_4^{0,4}$ is the only possible non-trivial differential involving $E_j^{4,1}$, $j \geq 2$, it follows from Poincar\'e duality that
$$
b_1(L) = b_5(L) \geq \rk(E_\infty^{4,1}) \geq b_3(M^7) - (1 + b_1(L))
$$
and may, therefore, be deduced that $2 \leq b_3(M^7) \leq 2 \, b_1(L) + 1$.  This forces $b_1(L) = 1$, which, in turn implies that $d_2^{k,1}$ is trivial for all $k \geq 0$.

Since $d_2^{2,1}$ is trivial, it follows that 
$$
2 \, b_2(M^7) = b_3(L) \geq \rk(E_\infty^{2,1} \oplus E_\infty^{3,0}) = b_2(M^7) + b_3(M^7)
$$
and, therefore, that $b_2(M^7) \geq b_3(M^7) \geq 2$.  Now, from this inequality and the fact that $d_2^{3,1}$ and $d_2^{4,1}$ are trivial, one obtains
\begin{align*}
1 = b_1(L) = b_5(L) &\geq \rk(E_2^{2,3} \oplus E_2^{4,1} \oplus E_2^{5,0}) - \rk(d_4^{0,4}) - \rk(d_5^{0,4}) \\
&\geq 2 \, b_2(M^7) + b_3(M^7) - 2 \\
&\geq 4\,,
\end{align*}
which is absurd.  This completes the proof.
\end{proof}

It remains only to deal with the cases where there is at least one singular leaf of codimension three.  Recall first that, for all $p,q \geq 0$, the sphere $\sph^{p+q+1}$ can be decomposed as $\sph^{p+q+1} = (\sph^p \x D^{q+1}) \cup (D^{p+1} \x \sph^q)$.  In particular, this implies that $\sph^7$ and, by Proposition \ref{P:SuffCond}, every $\sph^3$-bundle over $\sph^4$ admits a double disk-bundle decomposition with $\{\ell_\pm\} = \{1,2\}$.  From a rational homotopy perspective, this is all that can happen.

% THM

\begin{thm}
\label{P:codim23}
	If the bundles $\sph^{\ell_\pm} \to L \to B_\pm$ have $\{\ell_\pm\} = \{1,2\}$,  
then $M^7$ is rationally homotopy equivalent to $\sph^7$ or $\sph^3 \x \sph^4$.
\end{thm}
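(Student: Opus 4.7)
The plan is to compute the rational cohomology ring of $M^7$ via the Gysin and Mayer--Vietoris sequences and then invoke intrinsic formality of products of spheres.

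Table~\ref{table:Qtype}, applied to $\{\ell_\pm\} = \{1,2\}$, forces $\pi_1(F) = \Z$ and $F \simeq_\Q \sph^1 \x \sph^2 \x \Omega \sph^4$, while simultaneously requiring both bundles $\sph^{\ell_\pm} \to L \to B_\pm$ to be orientable. Since $M^7$ is simply connected, $L$ is orientable by \cite[p.~107]{Hi}, and hence so are the $B_\pm$.  Label the bundles so that $\dim B_- = 5$ and $\dim B_+ = 4$. From the homotopy fibration $F \to L \to M^7$ and the sphere bundle $\sph^2 \to L \to B_+$, one obtains $\pi_1(L) \cong \pi_1(B_+)$, cyclic of rank at most $1$.

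First, I would rule out $b_1(L) = 0$. Let $a_k = b_k(B_-)$, $c_k = b_k(B_+)$, $l_k = b_k(L)$, and let $r_k, s_k$ denote the ranks of the Euler-class cup products in the Gysin sequences for the two bundles, so that $l_k = a_k + a_{k-1} - r_k - r_{k+1}$ and $l_k = c_k + c_{k-2} - s_k - s_{k+1}$.  Combined with Poincar\'e duality on $B_-$ (giving $a_3 = a_2$, $a_4 = a_1$, $a_5 = 1$), the assumption $b_1(L) = 0$ forces $c_1 = c_3 = 0$, hence $s_3 = s_4 = 0$; elementary manipulation of the resulting linear system, using the bound $r_k \leq \min(a_{k-2}, a_k)$, then yields the contradictory requirement $r_3 = a_2 \geq 1$ while $r_3 \leq a_1 = 0$.

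Thus $b_1(L) = 1$, so $b_1(B_+) = 1$. A similar analysis, now constrained by Mayer--Vietoris applied to $M^7$ (namely, $H^1(M^7;\Q) = 0$ forces $H^1(B_-) \oplus H^1(B_+) \hookrightarrow H^1(L) = \Q$, which in turn forces $b_1(B_-) = 0$), pins down the Betti numbers
\[
b_*(B_-) = (1,0,1,1,0,1), \quad b_*(B_+) = (1,1,0,1,1), \quad b_*(L) = (1,1,1,2,1,1,1),
\]
and shows that both Euler classes vanish rationally, so $L \simeq_\Q \sph^1 \x B_- \simeq_\Q \sph^2 \x B_+$.  Feeding this into the Mayer--Vietoris sequence for $M^7 = DB_- \cup_L DB_+$, I would verify via the Gysin sequences that the maps $\pi_\pm^*$ are rational isomorphisms in all relevant degrees except possibly $H^3$, where the map $H^3(B_-) \oplus H^3(B_+) \to H^3(L)$ is either bijective or of rank one.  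Consequently, $b_k(M^7) = 0$ for $k \in \{1, 2, 5, 6\}$ and $b_3(M^7) = b_4(M^7) \in \{0, 1\}$, so $H^*(M^7; \Q)$ is isomorphic as a graded ring (Poincar\'e duality on $M^7$ recovers the cup product uniquely from the Betti numbers in each case) to either $H^*(\sph^7;\Q)$ or $H^*(\sph^3 \x \sph^4; \Q)$.  Since products of spheres are intrinsically formal \cite{FH1} and $M^7$ is nilpotent, this identification of cohomology rings lifts to a rational homotopy equivalence, yielding the claim.

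The main obstacle is the bookkeeping in the first two steps: the Betti numbers $a_k, c_k, l_k$ and Euler-class ranks $r_k, s_k$ must be tracked simultaneously, and Poincar\'e duality on both $B_-$ and $M^7$, together with the Mayer--Vietoris injectivity at $H^1(M^7)$, must be deployed in tandem in order to eliminate all cases except the unique viable one.
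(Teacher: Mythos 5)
Your proposal takes a genuinely different route from the paper's.  The paper first establishes that $B_-$ is a simply connected rationally elliptic $5$-manifold (obtaining $\pi_1(B_-) = 0$ from equation~(3.7) of \cite{GH}, then combining the two Gysin sequences to get $b_2(B_+) = 2(b_1(L)-1) \leq 0$, forcing $b_1(L) = 1$, $b_2(B_+) = 0$, and hence $b_2(B_-) = b_2(L) \leq b_0(B_+) = 1$), so that $M^7$ is rationally elliptic by Lemma~\ref{L:high_enough}; it then shows $b_2(M^7) = 0$ via Poincar\'e--Lefschetz duality applied to the pair $(M^7, DB_+)$, and finishes by invoking Herrmann's classification \cite{He} (or the relations \eqref{E:degrels}) of rationally elliptic $7$-manifolds.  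Your route instead computes $H^*(M^7;\Q)$ outright from the Gysin and Mayer--Vietoris sequences and closes with intrinsic formality of products of spheres; this works and avoids both the Barden--Smale classification and the ellipticity classification, at the cost of substantially heavier bookkeeping.

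Two points in that bookkeeping need repair.  First, the specific contradiction you cite to rule out $b_1(L)=0$ (``$r_3 = a_2 \geq 1$ while $r_3 \leq a_1 = 0$'') does not arise along the natural path: once $l_1 = 0$ forces $a_1 = 0$ and $r_2 = 1$, the two Gysin computations of $l_3$ give $2a_2 = 2c_1 = 0$, hence $a_2 = 0$, whence $l_2 = a_2 - r_2 = -1 < 0$, which is the actual contradiction.  Second, your ``pins down the Betti numbers'' step, as stated, has a gap: the linear system alone admits the spurious configuration $a_2 = 0$, $c_2 = 0$, $s_3 = s_4 = 1$, giving $b_*(B_-) = (1,0,0,0,0,1)$ and $b_*(L) = (1,1,0,0,0,1,1)$, which it does not eliminate.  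This case must be excluded by the observation that the Euler class of the $\sph^2$-bundle $\sph^2 \to L \to B_+$ is \emph{automatically} rationally trivial, being $2$-torsion as for any odd-rank oriented vector bundle; this is not a consequence of the Betti-number computation and must be invoked separately.  (As it happens, the spurious configuration also yields $b_*(M^7) = (1,0,0,1,1,0,0,1)$, so your final conclusion would survive even without this repair, but the claim that the Betti numbers of $B_\pm$ and $L$ are uniquely determined would not.)  With these two corrections the proposal is sound, and, modulo checking that the intermediate nilpotency hypotheses for intrinsic formality are satisfied (they are, since $M^7$ is simply connected), it gives a valid alternative proof.
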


% PROOF

\begin{proof}  
Suppose, without loss of generality, that $\ell_- = 1$, $\ell_+ = 2$.  By Table \ref{table:Qtype}, both of the bundles $\sph^{\ell_\pm} \to L \to B_\pm$ are orientable, $\pi_1(F) = \Z$ and %a finite cover of 
$F$ is rationally homotopy equivalent to $\sph^1 \x \sph^2 \x \Omega \sph^4$.  Moreover, since $M^7$ is simply connected and $\ell_+ = 2$, equation (3.7) of \cite{GH} implies that the homomorphism $\pi_1(\sph^1) \to \pi_1(L)$ in the long exact homotopy sequence for $\sph^1 \to L \to B_-$ must be surjective.  In particular, it follows that $\pi_1(B_-) = 0$.  On the other hand, the long exact homotopy sequence for $\sph^2 \to L \to B_+$ yields $\pi_1(L) = \pi_1(B_+)$ and, thus, $b_1(L) = b_1(B_+)$, while the long exact homotopy sequence for $F \to L \to M^7$ yields that either $\pi_1(L) = \Z$ or $\pi_1(L)$ is finite.  Observe, finally, that the orientability of the bundles $\sph^{\ell_\pm} \to L \to B_\pm$ and \cite[p.~107]{Hi} together ensure that $L$ and $B_\pm$ are all orientable and, hence, satisfy Poincar\'e duality.

In order to establish that $M^7$ is rationally elliptic, it suffices, via the classification of Barden and Smale \cite{Ba,Sm3}, the Hurewicz Theorem and Lemma \ref{L:high_enough}, to demonstrate that $b_2(B_-) \leq 1$, because $B_-$ is a closed, simply connected $5$-manifold. 

From exactness in the portion of the Gysin sequence for the bundle $\sph^1 \to L \to B_-$ given by
$$
0 = H^1(B_-) \to H^1 (L) \to H^0 (B_-) \to \cdots % H^0(B_+) \to H^3(B_+) \to H^3(L) \to H^1(B_+) \to H^4(B_+) \to H^4(L) 
\to H^3(L) \to H^2(B_-) \to H^4(B_-) = 0 \,,
$$
together with Poincar\'e duality, it may easily be deduced that
\beq
\label{E:b2b3L}
b_3(L) = 2 \, b_2(B_-) = 2  (b_2(L) - b_1(L) + 1) .
\eeq
On the other hand, exactness in the portion of the Gysin sequence for $\sph^2 \to L \to B_+$ given by 
$$
0 = H^{-1}(B_+) \to H^2(B_+) \to H^2(L) \to H^0(B_+) \to  \cdots % H^0(B_+) \to H^3(B_+) \to H^3(L) \to H^1(B_+) \to H^4(B_+) \to H^4(L) 
\to H^2(B_+) \to H^5(B_+) = 0
$$
yields
\beq
\label{E:b2B}
2 \, b_2(B_+) = 2(b_2(L) + b_1(L) - 1) - b_3(L) \,.
\eeq

By combining equations \eqref{E:b2b3L} and \eqref{E:b2B}, it may be concluded that $b_2(B_+) = 2(b_1(L) - 1) \leq 0$, whence it follows that $b_1(B_+) = b_1(L) = 1$ and $b_2(B_+) = 0$.  From the Gysin sequence for $\sph^2 \to L \to B_+$ it now follows that $b_2(B_-) = b_2(L) \leq b_0(B_+) = 1$ and, therefore, $M^7$ is rationally elliptic.

Furthermore, in the rational long exact sequence for the pair $(M^7, DB_+)$ there is a short exact sequence
$$
0 = H^1(M^7; \Q) \to H^1(B_+; \Q) \to H^2(M^7, DB_+; \Q) \to H^2(M^7; \Q) \to H^2(B_+; \Q) = 0 \,.
$$
Since, by excision and Poincar\'e-Lefschetz duality, 
$$ 
H^2(M^7, DB_+; \Q) \cong  H^2(DB_-, L; \Q) \cong  H^5(B_-; \Q) = \Q,
$$
it follows from the Hurewicz Theorem that $\rk(\pi_2(M^7)) = b_2(M^7) = 0$.  Now, by \cite{He}, or by simply examining the inequalities in \eqref{E:degrels}, a rationally elliptic manifold $M^7$ with $\rk(\pi_2(M^7)) = 0$ must be rationally homotopy equivalent to $\sph^7$ or $\sph^3 \x \sph^4$, as desired.
\end{proof}

By taking advantage of the standard decomposition of $\sph^3$ as the union of two $3$-disks, it is clear that, for any closed, smooth, simply connected $4$-manifold $N^4$, the product $\sph^3 \x N^4$ admits a double disk-bundle decomposition with singular leaves both of codimension three.  Therefore, to avoid rationally hyperbolic $7$-manifolds admitting such a double disk-bundle decomposition, it is necessary to impose some topological restrictions.

% THM

\begin{thm}
\label{P:codim3}
	If the singular leaves $B_\pm$ are both of codimension three and $b_2(M^7) \leq 2$, then $M^7$ is rationally elliptic.	
\end{thm}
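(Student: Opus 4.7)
The plan is as follows. From Table~\ref{table:Qtype}, the case $\ell_\pm = 2$ forces $\pi_1(F) = 0$, and combining this with the long exact homotopy sequences for $F \to L \to M^7$ and $\sph^2 \to L \to B_\pm$ yields $\pi_1(L) = 0$ and $\pi_1(B_\pm) = 0$, so $B_\pm$ are closed, simply connected (hence orientable) $4$-manifolds. Since $H^3(B_\pm; \Q) = 0$ by Poincar\'e duality, the rational Euler class of each orientable $\sph^2$-bundle vanishes, and Leray--Hirsch then provides an additive isomorphism $H^*(L; \Q) \cong H^*(\sph^2; \Q) \otimes H^*(B_\pm; \Q)$ in which each $\pi_\pm^*: H^*(B_\pm; \Q) \hookrightarrow H^*(L;\Q)$ is an injective ring homomorphism. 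In particular, $b_2(B_-) = b_2(B_+)$; call this common value $b$.

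Next I would apply Mayer--Vietoris to $M^7 = DB_- \cup_L DB_+$. In degrees $2$ and $3$, using $H^1(B_\pm;\Q) = H^3(B_\pm;\Q) = 0$, it produces the short exact sequence
\[
0 \to H^2(M^7;\Q) \to H^2(B_-;\Q) \oplus H^2(B_+;\Q) \xrightarrow{\pi_-^* - \pi_+^*} H^2(L;\Q) \to H^3(M^7;\Q) \to 0,
\]
from which $b_3(M^7) = b_2(M^7) - b + 1$. Combined with $b_3(M^7) \geq 0$ and the hypothesis $b_2(M^7) \leq 2$, this forces $b \leq 3$. Whenever $b \leq 2$, each $B_\pm$ is a simply connected $4$-manifold with $b_2 \leq 2$, hence rationally homotopy equivalent to one of $\sph^4$, $\CP^2$, $\sph^2 \x \sph^2$, or $\CP^2 \# \overline{\CP}^2$, all rationally elliptic. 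The product decomposition of its rational cohomology then makes $L$ rationally elliptic, so Lemma~\ref{L:high_enough} delivers the rational ellipticity of $M^7$.

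The main obstacle is excluding $b = 3$, which forces $b_2(M^7) = 2$, $b_3(M^7) = 0$, and $b_4(M^7) = 0$ by Poincar\'e duality. To handle it, I plan to combine the cup-product ring structure on $H^*(L;\Q)$ with Poincar\'e duality on $B_-$. The degree-$4$ piece of Mayer--Vietoris, using $H^3(L;\Q) = 0$, reads $0 \to H^4(B_-;\Q) \oplus H^4(B_+;\Q) \to H^4(L;\Q)$, forcing the one-dimensional subspaces $W_\pm := \pi_\pm^*(H^4(B_\pm;\Q)) \subseteq H^4(L;\Q)$ to satisfy $W_- \cap W_+ = 0$. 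Meanwhile, the three-dimensional subspaces $V_\pm := \pi_\pm^*(H^2(B_\pm;\Q))$ of the four-dimensional space $H^2(L;\Q)$ meet in a two-dimensional subspace, since the kernel in the degree-$2$ sequence above has dimension $b_2(M^7) = 2$.

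For any $x \in V_- \cap V_+$, writing $x = \pi_-^*(y) = \pi_+^*(z)$, the cup-product square satisfies $x^2 = \pi_-^*(y^2) \in W_-$ and, simultaneously, $x^2 = \pi_+^*(z^2) \in W_+$, so $x^2 \in W_- \cap W_+ = 0$. Pulling back through the injection $\pi_-^*$, each $y$ in the two-dimensional preimage $U := (\pi_-^*|_{H^2(B_-;\Q)})^{-1}(V_- \cap V_+) \subseteq H^2(B_-;\Q)$ satisfies $y^2 = 0$; polarizing, $U$ is totally isotropic for the intersection form on $H^2(B_-;\Q) \cong \Q^3$, which is non-degenerate by Poincar\'e duality. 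Since the Witt index of a non-degenerate quadratic form on a three-dimensional $\Q$-vector space is at most $\lfloor 3/2 \rfloor = 1$, no such two-dimensional totally isotropic subspace exists, delivering the required contradiction.
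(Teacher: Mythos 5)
Your argument reaches the same conclusion by a genuinely different route in the crux case $b = 3$. The set-up (deducing $\pi_1(F)=0$, simple connectivity of $L$ and $B_\pm$, the Gysin/Leray--Hirsch computation of $H^*(L;\Q)$, and the Mayer--Vietoris identity $b_3(M^7) = b_2(M^7) - b + 1$ forcing $b \leq 3$) coincides with the paper's. Where you diverge is in excluding $b = 3$: the paper runs a rational-homotopy count, using the fact that $H^3(\cdot;\Q)=0$ forces the degree-$3$ part of a minimal model to inject into the symmetric square of the degree-$2$ part, to obtain $\rk\pi_3(M^7)=3$ and $\rk\pi_3(L)\geq 6$, and then contradicts the bound $\rk\pi_3(L)\leq \rk\pi_3(M^7) + \rk\pi_3(F)\leq 5$ from the long exact sequence. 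You instead exploit the cup-product structure: the degree-$4$ Mayer--Vietoris injectivity gives $W_-\cap W_+=0$, so every class in the two-dimensional overlap $V_-\cap V_+\subset H^2(L;\Q)$ squares to zero; pulling back through the injective ring map $\pi_-^*$ produces a two-dimensional totally isotropic subspace of $H^2(B_-;\Q)\cong\Q^3$, which is impossible since the intersection form is nondegenerate and hence has Witt index $\leq 1$. This is correct, and arguably more elementary since it sidesteps minimal models and uses only Poincar\'e duality and the Witt-index bound; the paper's approach is somewhat more in keeping with the rational-homotopy toolkit used throughout the article. One small slip: the $0$ on the left of your degree-$4$ sequence requires $H^4(M^7;\Q)=0$ (which you have already established), not merely $H^3(L;\Q)=0$, but this does not affect the argument.
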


\begin{proof}
By Table \ref{table:Qtype}, the hypothesis on the singular leaves is equivalent to $F$ being simply connected and rationally homotopy equivalent to one of
$$
\sph^2 \x \Omega \sph^3, \ 
\sph^2 \x \sph^2 \x \Omega \sph^5, \ 
\SU(3)/T^2 \x \Omega \sph^7, \ 
\Sp(2)/T^2 \x \Omega \sph^9,  \text{ or }
\mathrm{G}_2/T^2 \x \Omega \sph^{13}.
$$
In particular, observe that in all cases $\rk(\pi_2(F)) = 2$ and $1 \leq \rk(\pi_3(F)) \leq 2$.

From the long exact homotopy sequences for $F \to L \to M^7$ and $\sph^2 \to L \to B_\pm$, it is clear that $\pi_1(L) = \pi_1(B_\pm) = 0$.  In particular, each of the bundles $\sph^2 \to L \to B_\pm$ possesses a Gysin sequence, from which it may easily be concluded that $H^2(L) = H^2(B_\pm) \oplus \Z$ and $H^3(L) = 0$, given that $B_\pm$ are closed, simply connected $4$-manifolds.

By the classification of closed, simply connected $4$-manifolds \cite{Fr} and Lemma \ref{L:high_enough}, it suffices to show that $b_2(B_\pm) \leq 2$ in order to establish that $M^7$ is rationally elliptic.  Suppose to the contrary, therefore, that $3 \leq b_2(B_\pm) = b_2(L) - 1$. 

The hypothesis $b_2(M^7) \leq 2$, together with exactness in the portion
$$
0 = H^1(L) \to H^2(M^7) \to H^2(B_-) \oplus H^2(B_+) \to H^2(L) \to H^3(M^7) \to H^3(B_-) \oplus H^3(B_+) = 0
$$
of the Mayer-Vietoris sequence for $M^7 = DB_- \cup_L DB_+$, now yields 
\begin{align*}
2 \geq b_2(M^7) &= b_3(M^7) + b_2(B_-) + b_2(B_+) - b_2(L) \\
&= b_3(M^7) + b_2(B_-) - 1 \\
&\geq b_3(M^7) + 2 \\
&\geq 2\,,
\end{align*}
from which it immediately follows that $b_2(M^7) = 2$, $b_3(M^7) = 0$, $b_2(B_\pm) = 3$ and $b_2(L) = 4$.  In particular, by the Hurewicz Theorem, it is now clear that $\rk(\pi_2(M^7)) = 2$ and $\rk(\pi_2(L)) = 4$.

Let $(\wedge V_X, d_X)$ be the minimal model of a simply connected space $X$, where $V_X = \bigoplus_{j=0}^\infty V^j$, with $V^0 = \Q$, $V^1 = 0$ and $V^j \cong \pi_j^\Q(X)$, for all $j \geq 2$.  Recall that $d_X$ is decomposable and, hence, satisfies $d_X(V^2) = 0$ and $d_X(V^3) \In V^2 \cdot V^2 \In \ker(d_X)$.  In particular, if $H^3(X; \Q) = 0$, then $d_X$ must map $V^3$ injectively into $V^2 \cdot V^2$.  Therefore, 
$$
\rk(\pi_3(X)) = \dim_\Q(V^3) \leq \dim_\Q(V^2 \cdot V^2) = \frac{1}{2} \rk(\pi_2(X)) (\rk(\pi_2(X)) + 1),
$$
while
\begin{align*}
b_4(X) &\geq \dim_\Q(V^2 \cdot V^2) - \dim_\Q(V^3) \\
&= \frac{1}{2} \rk(\pi_2(X)) (\rk(\pi_2(X)) + 1) - \rk(\pi_3(X)) \geq 0\,.
\end{align*}

Now, by Poincar\'e duality, $b_4(M^7) = b_3(M^7) = 0$ and $b_4(L) = b_2(L) = 4$.  Thus, the inequalities above, together with the identities $\rk(\pi_2(M^7)) = 2$ and $\rk(\pi_2(L)) = 4$, yield
$$
\rk(\pi_3(M^7)) = 3 
\quad \text{ and } \quad 
\rk(\pi_3(L)) \geq 10 - b_4(L) = 6\,.
$$
However, from the long exact homotopy sequence for $F \to L \to M$ one has
$$
\rk(\pi_3(L)) \leq \rk(\pi_3(M^7)) + \rk(\pi_3(F)) \leq 5 \,,
$$
a contradiction.
\end{proof}

The main result of this section is now a simple consequence of all the preceding groundwork.

% THM (THM E DIM 7)

\begin{thm}
\label{T:dim7}
Let $M^7$ be a closed, smooth, simply connected $7$-manifold with second Betti number $b_2(M^7) \leq 2$ which admits a double disk-bundle decomposition.  Then $M^7$ is rationally elliptic.
\end{thm}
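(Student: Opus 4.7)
The plan is to reduce the statement to the case analysis already carried out in Propositions \ref{P:L_BOUND} and \ref{P:exceptional}, and in Theorems \ref{P:rk0}, \ref{P:rk2}, \ref{P:rk1}, \ref{P:codim23} and \ref{P:codim3}. First, Proposition \ref{P:exceptional} shows that, since $M^7$ is simply connected and $B_\pm$ are connected, both singular leaves have codimension at least two; Proposition \ref{P:L_BOUND} then disposes of any decomposition in which some $B_\pm$ has codimension at least four. Together this reduces the problem to decompositions with $\ell_\pm \in \{1,2\}$.

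Next I would split into cases according to the unordered pair $\{\ell_-, \ell_+\}$. If $\{\ell_-, \ell_+\} = \{1,2\}$, Theorem \ref{P:codim23} applies with no Betti-number hypothesis and in fact forces $M^7$ to be rationally homotopy equivalent to $\sph^7$ or $\sph^3 \times \sph^4$. If $\ell_\pm = 2$, both singular leaves have codimension three and Theorem \ref{P:codim3} applies, using the hypothesis $b_2(M^7) \leq 2$. If $\ell_\pm = 1$, Table \ref{table:Qtype} shows that the orientability of the two circle bundles $\sph^1 \to L \to B_\pm$ corresponds to the three possibilities $\pi_1(F) \in \{Q_8,\, \Z \oplus \Z_2,\, \Z^2\}$: both non-orientable, exactly one orientable, and both orientable. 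These are handled by Theorems \ref{P:rk0}, \ref{P:rk1} and \ref{P:rk2}, respectively, the last of which uses $b_2(M^7) \leq 2$.

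At this stage there is no real obstacle remaining; the argument is essentially a bookkeeping exercise verifying that the case split is exhaustive and that, in each sub-case, the hypotheses of the relevant earlier theorem are met. The bound $b_2(M^7) \leq 2$ in the statement is inherited exactly from the two sub-cases covered by Theorems \ref{P:rk2} and \ref{P:codim3}, which are the only ones requiring such a restriction; all other sub-cases produce rational ellipticity unconditionally. The substantive work, and the genuine obstacles, have therefore been concentrated in the earlier theorems, in particular the rational Serre spectral-sequence computations used to control $b_3(M^7)$ in Theorem \ref{P:rk1} and the appeal to intrinsic formality of low-dimensional Poincar\'e-duality spaces in Theorem \ref{P:rk2}.
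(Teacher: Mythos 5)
Your proposal is exactly the paper's argument: reduce to codimension $\leq 3$ singular leaves via Propositions \ref{P:exceptional} and \ref{P:L_BOUND}, then dispatch each remaining case through Theorems \ref{P:rk0}, \ref{P:rk2}, \ref{P:rk1}, \ref{P:codim23} and \ref{P:codim3}. Your case split by $\{\ell_-,\ell_+\}$ and circle-bundle orientability is a correct and complete bookkeeping of which earlier result applies where, and your observation about where the $b_2 \leq 2$ hypothesis is actually needed matches Remark \ref{r:7hyp}.
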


% PROOF

\begin{proof}
By Proposition \ref{P:L_BOUND}, it suffices to consider singular leaves of codimension at most three.  All such cases has been dealt with in Theorems~\ref{P:rk0}, \ref{P:rk2}, \ref{P:rk1}, \ref{P:codim23}, and \ref{P:codim3}, thus completing the proof.
\end{proof}

% REMARK

\begin{remark}
\label{r:7hyp}
Notice that the hypothesis $b_2(M^7) \leq 2$ has been used only in the cases where the singular leaves $B_\pm$ are of codimension $1 \leq \ell_- = \ell_+ \leq 2$ and the bundles $\sph^{\ell_\pm} \to L \to B_\pm$ are both orientable.  In these scenarios, standard decompositions of $\sph^3$ lead to counterexamples whenever $b_2(M^7) \geq 3$ is permitted.  In all other cases, assuming only that $M^7$ admits a double disk-bundle decomposition is enough to conclude that $M^7$ is rationally elliptic.
\end{remark}

In contrast with the six-dimensional case, recall that a large family of closed, $2$-connected $7$-manifolds admitting double-disk bundle decompositions was constructed in \cite{GKS1}, each having rational cohomology ring isomorphic to either $H^*(\sph^7; \Q)$ or $H^*(\sph^3 \x \sph^4; \Q)$.  Moreover, in \cite{GKS2} it was observed that this family does not contain all possible homotopy types of such manifolds: for example, it does not contain any $2$-connected $7$-manifold $M^7$ with $H^4(M^7) = \Z_5$ and non-standard linking form.  It is unknown whether these excluded spaces also admit a double disk-bundle decomposition.

Being unable to address even the case of rational $7$-spheres at present, a classification up to diffeomorphism of simply connected, rationally elliptic $7$-manifolds which admit a double disk-bundle decomposition seems out of reach for the moment.  Recall, however, that Herrmann has shown that a simply connected, rationally elliptic $7$-manifold must be rationally homotopy equivalent to one of $\sph^7$, $\sph^2 \x \sph^5$, $\sph^3 \x \sph^4$, $\sph^3 \x \CP^2$, $N^7$ or $M^7_\sigma$, for $\sigma \in \Q^*/(\Q^*)^2$.  The manifold $N^7$ has minimal model $(\wedge V, d) = (\wedge(x_1, x_2, y_1, y_2, y_3), d)$, where $\deg(x_i) = 2$, $i \in \{1,2\}$, $\deg(y_j) = 3$, $j \in \{1,2,3\}$, and the differential $d$ is given by
\beq
\label{E:N7}
d(x_1) = d(x_2) = 0\,, \qquad 
d(y_1) = x_1^2 \,, \qquad 
d(y_2) = x_2^2 \,, \qquad 
d(y_3) = x_1 x_2 \,.
\eeq
By \cite[Theorem 6.1]{GGKR}, the unit tangent bundle of $\sph^2 \x \sph^2$ is a concrete representative of this rational homotopy type.  Moreover, from Example 2.91 of \cite{FOT} it is known that any manifold with minimal model \eqref{E:N7} is not formal.

The family $M^7_\sigma$ consists of spaces with minimal model $(\wedge V_\sigma, d_\sigma) = (\wedge(x_1, x_2, y_1, y_2, y_3), d_\sigma)$, where $\deg(x_i) = 2$, $i \in \{1,2\}$, $\deg(y_j) = 3$, $j \in \{1,2,3\}$, and the differential $d_\sigma$ is given, for $\sigma \in  \Q^*/(\Q^*)^2$, by
\beq
\label{E:Msig}
d_\sigma(x_1) = d_\sigma(x_2) = d_\sigma(y_3) = 0 \,, \qquad 
d_\sigma(y_1) = x_1 x_2 \,, \qquad 
d_\sigma(y_2) = x_1^2 - \sigma x_2^2 \,.
\eeq
The spaces $M^7_1$ and $M^7_{-1}$ are rationally homotopy equivalent to $\sph^3 \x (\CP^2 \# \CP^2)$ and $\sph^3 \x (\CP^2 \# \ol{\CP}^2) \simeq_\Q \sph^3 \x \sph^2 \x \sph^2$ respectively, whereas no concrete representative of the rational homotopy type is currently known when $\sigma \neq \pm 1$.

\begin{thm}
\label{T:MNDDBs}
Each of the minimal models \eqref{E:N7} and \eqref{E:Msig}, $\sigma = \pm 1$, is realized by biquotients $( \sph^3 \x \sph^3 \x \sph^3) \bq T^2$ and, hence, has a representative admitting a double disk-bundle decomposition.  Moreover, there are infinitely many such biquotients with minimal model \eqref{E:N7}, each of which is not formal.
\end{thm}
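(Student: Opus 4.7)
The plan is to construct, for each of the three claimed minimal models, an explicit biquotient $N^7 = SU(2)^3 \bq T^2$ via a homomorphism $\phi\colon T^2 \to T^3 \x T^3$ into the product of maximal tori, and then to argue uniformly that every such biquotient admits a double disk-bundle decomposition.

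Given integer weights $(\alpha_i, \beta_i) \in \Z^2 \oplus \Z^2$, $i = 1, 2, 3$, encoding $\phi$ and interpreted as linear forms in the generators $s_1, s_2$ of $H^2(BT^2; \Q)$, Eschenburg's description presents a rational model of $N^7$ as $(\Q[s_1, s_2] \otimes \wedge(y_1, y_2, y_3), d)$ with $d y_i = \beta_i^2 - \alpha_i^2$, from which the minimal model is extracted by killing redundant closed generators and applying rational changes of basis. For \eqref{E:N7} I will take $(\alpha_1, \beta_1) = (0, s_1)$, $(\alpha_2, \beta_2) = (0, s_2)$ and $(\alpha_3, \beta_3) = (k(s_1 - s_2), k(s_1 + s_2))$ for each $k \in \N$, giving $d y_1 = s_1^2$, $d y_2 = s_2^2$ and $d y_3 = 4k^2 s_1 s_2$; the choice $k = 1$ recovers the unit tangent bundle of $\sph^2 \x \sph^2$ as in \cite{GGKR}. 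For \eqref{E:Msig} with $\sigma = -1$ I will take $(0, s_1), (0, s_2), (0, 0)$, recovering $\sph^3 \x \sph^2 \x \sph^2$, whose standard minimal model becomes \eqref{E:Msig} with $\sigma = -1$ after the rational change of basis $X_1 = x_1 + x_2$, $X_2 = x_1 - x_2$. For $\sigma = 1$ I will take $(s_2, s_1), (s_1 - s_2, s_1 + s_2), (0, 0)$; an intersection-form computation identifies the resulting quotient of the first two factors as $\CP^2 \# \CP^2$, and $N^7$ with $\sph^3 \x (\CP^2 \# \CP^2)$ after absorbing a $\Z_2$ ineffective kernel.

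In each of these choices, the first two $\sph^3$-factors carry a $T^2$-action whose generic stabilisers are one-parameter circles in $T^2$ meeting only in the (effective) identity, so the projection exhibits $N^7$ as the total space of an $\sph^3$-bundle over a smooth, closed, simply connected $4$-manifold $Y$: namely $Y = \sph^2 \x \sph^2$ for \eqref{E:N7} and for \eqref{E:Msig} with $\sigma = -1$, and $Y = \CP^2 \# \CP^2$ for $\sigma = 1$. Each such $Y$ admits a double disk-bundle decomposition by the four-dimensional classification of Ge and Radeschi \cite{GeRa}, so Proposition~\ref{P:SuffCond}\eqref{i:base} then delivers one for $N^7$.

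Finally, to separate infinitely many diffeomorphism types within the family $\{N^7_k\}_{k \geq 1}$ realising \eqref{E:N7}, I will compute $H^*(N^7_k; \Z)$ via the integral Serre spectral sequence of $SU(2)^3 \to N^7_k \to BT^2$. The only transgressions on the primitive degree-$3$ generators are $d_4(y_i) = \beta_i^2 - \alpha_i^2$, and the only non-trivial contribution to total degree $4$ on the $E_\infty$-page is $E_\infty^{4,0} \cong \Z[s_1, s_2]_4/(s_1^2, s_2^2, 4k^2 s_1 s_2) \cong \Z/(4k^2)$, yielding $H^4(N^7_k; \Z) \cong \Z/(4k^2)$. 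As the orders of these torsion groups are pairwise distinct, the $N^7_k$ are pairwise non-homeomorphic; each is non-formal by the quoted Example~2.91 of \cite{FOT} applied to the common minimal model \eqref{E:N7}. The main obstacle in the plan will be the integral cohomology computation: isolating the $\Z/(4k^2)$ invariant requires a careful (but finite) check that no higher differentials or extension problems disturb the degree-$4$ line, which should follow along the lines of Eschenburg's treatment of generalised flag biquotients.
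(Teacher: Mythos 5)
Your proposal is correct and reaches the conclusion by a genuinely different route for the double disk-bundle part. The paper's proof appeals to Proposition~\ref{P:SuffCond}\eqref{i:BiqFol}: every biquotient $(\sph^3\x\sph^3\x\sph^3)\bq T^2$ admits a DDB because the free $T^2$ action is a subaction of a cohomogeneity-one action by $T^2\x(\sph^3\x\sph^3)\x(\sph^3\x\sph^3)$, so the argument is uniform across the whole biquotient family and does not depend on the choice of weights. You instead choose weights so that the restricted $T^2$ action on the first two $\sph^3$-factors is free, exhibit the biquotient as the total space of an $\sph^3$-bundle over a simply connected $4$-manifold $Y\in\{\sph^2\x\sph^2,\,\CP^2\#\CP^2\}$, and invoke Proposition~\ref{P:SuffCond}\eqref{i:base} together with the Ge--Radeschi four-dimensional classification; this is also valid, but it is a more hands-on argument tied to your specific weight choices and would not apply to an arbitrary biquotient in the family. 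A further difference is that the paper simply cites \cite{DV2} and \cite{GGKR} for the minimal models and the integral cohomology, whereas you recompute them via Eschenburg's model and the Serre spectral sequence of $SU(2)^3\to N^7\to BT^2$. Your family $N^7_k$ with $H^4\cong\Z_{4k^2}$ is precisely the even-parameter subfamily ($m=2k$) of the paper's $N^7_m$ with $H^4\cong\Z_{m^2}$, and of course still gives infinitely many pairwise non-homeomorphic representatives. One point that deserves a more careful write-up is the $\sigma=1$ case: your chosen weights produce a $T^2$-action on $\sph^3\x\sph^3$ with a $\Z_2$ ineffective kernel, and the integral (as opposed to rational) intersection-form computation identifying the base as $\CP^2\#\CP^2$ has to be carried out for the effective $T^2/\Z_2$-action; you flag this but do not carry out the reparametrisation of weights, which is where the integral lattice computation is sensitive.
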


\begin{proof}
By \cite[Example 2.91]{FOT}, a space with minimal model \eqref{E:N7} is not formal.  Biquotients of the form $( \sph^3 \x \sph^3 \x \sph^3) \bq T^2$ have been studied in \cite{DV2} and \cite{GGKR}.  The minimal models were determined in the proof of \cite[Theorem 6.1]{GGKR}, while the integral cohomology rings and characteristic classes were determined in \cite[Proposition 4.35]{DV2}.  In particular, a biquotient with minimal model \eqref{E:N7} generically has torsion in its cohomology ring: for example, there is a nice subfamily $N^7_m$, $m \in \Z$, of such spaces consisting of $\sph^3$-bundles over $\sph^2 \x \sph^2$ with structure group $T^2$, $H^2(N^7_m) = \Z^2$, $H^3(N^7_m) = 0$ and $H^4(N^7_m) = \Z_{m^2}$.  This subfamily is described by the action
\begin{align*}
T^2 \x ( \sph^3 \x \sph^3 \x \sph^3) &\to ( \sph^3 \x \sph^3 \x \sph^3) \\
((z,w), (q_1, q_2, q_3)) &\mapsto ( z \, q_1, w \, q_2 , z^m \, u_3 + w^m \, v_3 \, j) ,
\end{align*}
where $m \in \Z$ and $q_3 = u_3 + v_3 \, j \in \sph^3$, $u_3, v_3 \in \C$, $|u_3|^2 + |v_3|^2 = 1$.  By the proof of \cite[Theorem 6.1]{GGKR}, the unit tangent bundle of $\sph^2 \x \sph^2$ is given by setting $m = 2$.

The fact that all biquotients $( \sph^3 \x \sph^3 \x \sph^3) \bq T^2$ admit a double disk-bundle decomposition follows from Proposition \ref{P:SuffCond}\eqref{i:BiqFol}, since the free $T^2$ action on $\sph^3 \x \sph^3 \x \sph^3$ is a subaction of a cohomogeneity-one action by $T^2 \x (\sph^3 \x \sph^3) \x (\sph^3 \x \sph^3)$.
\end{proof}

Finally, note that it is unknown whether there is a representative of each rational homotopy type \eqref{E:Msig}, $\sigma \neq \pm 1$, which admits either a double disk-bundle decomposition or a metric with non-negative sectional curvature.

%---------------------------------------------------
%	BIBLIOGRAPHY
%---------------------------------------------------

%%%%% Bibliography %%%%%
%\bibliographystyle{alpha}
%\bibliography{myrefs}

\begin{thebibliography}{10}

\bibitem{AR} Alexandrino, M.~and Radeschi, M., \emph{Mean curvature flow of singular Riemannian foliations}, J.\ Geom.\ Anal. \textbf{26} (2016), no.~3, 2204--2220.

\bibitem{AFW}  Aschenbrenner, M., Friedl, S.~and Wilton, H., \emph{$3$-manifold Groups}, EMS Series of Lectures in Mathematics. European Mathematical Society (EMS), Z\"urich, 2015.

\bibitem{BK}Bamler, R.\ H.~and Kleiner, B., \emph{Ricci flow and contractibility of spaces of metrics}, preprint 2019, arXiv:1909.08710.


\bibitem{Ba}  Barden, D., \emph{Simply connected $5$–manifolds}, Ann.\ of Math. \textbf{82} (1965) 365--385.

\bibitem{Barge} Barge, J., \emph{Dualit\'e dans les rev\^etements galoisiens}, Invent.\ Math. \textbf{58} (1980), 101--106.

\bibitem{Boe} B\"{o}hm, C., \emph{Inhomogeneous Einstein metrics on low-dimensional spheres and other low-dimensional spaces},  Invent.\ Math. \textbf{134} (1998), no.~1, 145--176.

\bibitem{Bo} Bolton, J., \emph{Transnormal systems}, Quart.\ J.\ Math.\ Oxford Ser.\ (2) \textbf{24} (1973), 385--395.

\bibitem{CN} Crowley, D.~and Nordstr\"om, J., \emph{The rational homotopy type of $(n-1)$-connected manifolds of dimension up to $5n-3$}, J.\ Topol. \textbf{13} (2020), 539--575.

\bibitem{De} Dearricott, O., \emph{A $7$-manifold with positive curvature}, Duke Math.\ J. \textbf{158} (2011), no.~2, 307--346.

%\bibitem{DeVito} DeVito, J., \emph{The classification of compact simply connected biquotients in dimensions $4$ and $5$}, Differential Geom.\ Appl.\ \textbf{34} (2014), 128--138.

\bibitem{DV2} DeVito, J., \emph{The classification of compact simply connected biquotients in dimension $6$ and $7$}, Math.\ Ann.\ \textbf{368} (2017), 1493--1541.

\bibitem{DVK} DeVito, J.\ and Kennard, L., \emph{Cohomogeneity one manifolds with singly generated rational cohomology}, Documenta Math. \textbf{25} (2020), 1835--1863.

\bibitem{EeKu} Eells, J.~and Kuiper, N.\ H., \emph{Manifolds which are like projective planes}, Inst.\ Hautes \'Etudes Sci.\ Publ.\ Math.\ No.\ 14 (1962), 5--46.  

\bibitem{Fa} Fang, F., \emph{Dual submanifolds in rational homology spheres}, Sci.\ China Math. \textbf{60} (2017), 1549--1560.

\bibitem{FH1} F\'elix Y.\ and Halperin, S., \emph{Formal spaces with finite-dimensional rational homotopy}, Trans.\ Amer.\ Math.\ Soc. \textbf{270} (1982), 575--588.

\bibitem{FH2} F\'elix Y.\ and Halperin, S., \emph{Rational L.-S.~category and its applications}, Trans.\ Amer.\ Math.\ Soc. \textbf{273} (1982), 1--38.

\bibitem{FHT} F\'elix, Y., Halperin, S.~and Thomas, J., \emph{Rational homotopy theory}, Graduate Texts in Mathematics, \textbf{205}. Springer-Verlag, New York, 2001. 

\bibitem{FHT2} F\'elix, Y., Halperin, S.~and Thomas, J., \emph{Rational homotopy theory II}, World Scientific Publishing Co.~Pte.~Ltd., Hackensack, NJ, 2015.

\bibitem{FOT} F\'elix, Y., Oprea, J.~and Tanr\'e, D., \emph{Algebraic models in geometry}, Oxford Graduate Texts in Mathematics, \textbf{17}. Oxford University Press, Oxford, 2008. 

\bibitem{FoHa} Foscolo, L.~and Haskins, M., \emph{New $G_2$-holonomy cones and exotic nearly K\"ahler structures on $\sph^6$ and $\sph^3 \x \sph^3$}, Ann.\ of Math. \textbf{185} (2017), 59--130.

\bibitem{Fr} Freedman, M.~H., \emph{The topology of four-dimensional manifolds}, J.\ Differential Geom. \textbf{17} (1982), no.~3, 357--453.

\bibitem{GGKR} Galaz-Garc\'ia, F., Kerin, M.\ and Radeschi, M., \emph{Torus actions on rationally elliptic manifolds}, Math.\ Z., \textbf{297} (2021), no. 1-2, 197–221.

\bibitem{GKRW} Galaz-Garc\'ia, F., Kerin, M., Radeschi, M.~and Wiemeler, M., \emph{Torus orbifolds, slice-maximal torus actions, and rational ellipticity},
Int.\ Math.\ Res.\ Not.\ IMRN 2018, no.~18, 5786--5822. 

\bibitem{GGS} Galaz-Garc\'ia, F.~and Searle, C., \emph{Nonnegatively curved $5$-manifolds with almost maximal symmetry rank}, Geom.\ Topol. \textbf{18} (2014), no.~3, 1397--1435.

\bibitem{GeRa} Ge, J.~and Radeschi, M., \emph{Differentiable classification of $4$-manifolds with singular Riemannian foliations}, Math.\ Ann. \textbf{363} (2015), no.~1-2, 525--548. 

\bibitem{GT} Ge, J.~and Tang, Z., \emph{Isoparametric functions and exotic spheres}, J.\ Reine Angew.\ Math. \textbf{683} (2013), 161--180.

\bibitem{GKS1} Goette, S., Kerin, M.~and Shankar, K., \emph{Highly connected $7$-manifolds and non-negative sectional curvature}, Ann.\ of Math. \textbf{191} (2020), 829--892.

\bibitem{GKS2} Goette, S., Kerin, M.~and Shankar, K., \emph{Highly connected $7$-manifolds, the linking form and non-negative sectional curvature}, preprint 2020, arXiv:2003.04907.

\bibitem{GKiS} Goette, S., Kitchloo, N.~and Shankar, K., \emph{Diffeomorphism type of the Berger space $\mathrm{SO}(5)/\mathrm{SO}(3)$}, Amer.\ J.\ Math.\ \textbf{126} (2004), no.~2, 395--416. 

\bibitem{Gr} Grove, K., \emph{Geometry of, and via, symmetries}, Conformal, Riemannian and Lagrangian geometry (Knoxville, TN, 2000), 31--53, Univ.\ Lecture Ser., \textbf{27}, Amer.\ Math.\ Soc., Providence, RI, 2002.

\bibitem{GH} Grove, K.~and Halperin, S., \emph{Dupin hypersurfaces, group actions and the double mapping cylinder}, J.\ Differential Geom. \textbf{26} (1987), no.~3, 429--459.

\bibitem{GS1} Grove, K.~and Searle, C., \emph{Positively curved manifolds with maximal symmetry-rank}, J.\ Pure Appl.\ Algebra \textbf{91} (1994), no.~1-3, 137--142.

\bibitem{GS2} Grove, K.~and Searle, C., \emph{Differential topological restrictions curvature and symmetry}, J.\ Differential Geom. \textbf{47} (1997), no.~3, 530--559; \emph{A correction on: ``Differential topological restrictions by curvature and symmetry''}, J.\ Differential Geom. \textbf{49} (1998), no.~1, 205.

\bibitem{GVWZ} Grove, K., Verdiani, L., Wilking, B.~and Ziller, W, \emph{Non-negative curvature obstructions in cohomogeneity one and the Kervaire spheres}, Ann. Sc. Norm. Super. Pisa Cl. Sci. (5) \textbf{5} (2006), no.~2, 159--170.

\bibitem{GVZ} Grove, K.,  Verdiani, L.~and Ziller, W., \emph{An exotic $T^1 \sph^4$ with positive curvature}, Geom.\ Funct.\ Anal. \textbf{21} (2011), no.~3, 499--524.

\bibitem{GWZ} Grove, K.,  Wilking, B.~and Ziller, W., \emph{Positively curved cohomogeneity one manifolds and 3-Sasakian geometry}, J.\ Differential Geom. \textbf{78} (2008), no.~1, 33--111.

\bibitem{GZ} Grove, K.~and Ziller, W., \emph{Curvature and symmetry of Milnor spheres}, Ann.\ of Math. \textbf{152} (2000), 331--367.

\bibitem{Ha1} Hatcher, A., \emph{A proof of the Smale conjecture, $\Diff(\sph^3) \simeq \Or(4)$}, Ann.\ of Math. \textbf{117} (1983), 553--607.

\bibitem{Ha2} Hatcher, A., \emph{Algebraic Topology}, Cambridge University Press, Cambridge, 2002.

\bibitem{Heb} Hebda, J.\ J., \emph{The regular focal locus}, J.\ Differential Geom. \textbf{16} (1981), no.~3, 421--429.

\bibitem{He} Herrmann, M., \emph{Classification and characterization of rationally elliptic manifolds in low dimensions}, Math.\ Z. \textbf{288} (2018), no.~3-4, 1081--1101.

\bibitem{Hi} Hirsch, M.\ W., \emph{Differential Topology}, Springer, New York, 1976.

\bibitem{Ho} Hoelscher, C.\ A., \emph{Classification of cohomogeneity one manifolds in low dimensions}, Pacific J.\ Math. \textbf{246} (2010), no.~1, 129--185.

\bibitem{HL} Hsiang, W.-Y.~and Lawson, B.~H., Jr., \emph{Minimal submanifolds of low cohomogeneity}, J.\ Differential Geom. \textbf{5} (1971), 1--38. 

\bibitem{Ju} Jupp, P., \emph{Classification of certain $6$-manifolds}, Proc.\ Camb.\ Philos.\ Soc. \textbf{73} (1973), 293--300.

%{\color{blue}\bibitem{KL} Kleiner, B. and Lott, J., \emph{Singular Ricci flows I}, Acta Math. \textbf{219} (2017), 65 -- 134.}


\bibitem{Ko1} Kollross, A., \emph{A classification of hyperpolar and cohomogeneity one actions}, Trans.\ Amer.\ Math.\ Soc. \textbf{354} (2002), 571--612.

\bibitem{Ko2} Kollross, A., \emph{Low cohomogeneity and polar actions on exceptional compact Lie groups}, Transform.\ Groups \textbf{14} (2009), 387--415.

\bibitem{Kos} Kosinski, A.\ A., \emph{Differential manifolds}, Pure and Applied Mathematics, 138.\  Academic Press, Inc., Boston, MA, 1993.

\bibitem{KS} Kreck, M.\ and Suciu, A.\ I., \emph{Free abelian covers, short loops, stable length and systolic inequalities}, Math.\ Ann. \textbf{340} (2008), 709--729.

\bibitem{Lu} Lupton, G., \emph{Intrinsic formality and certain types of algebras}, Trans.\ Amer.\ Math.\ Soc. \textbf{319} (1990), 257--283.

\bibitem{MP} May, J.\ P.~and Ponto, K., \emph{More concise algebraic topology. Localization, completion, and model categories}, Chicago Lectures in Mathematics, University of Chicago Press, Chicago, IL, 2012. 

\bibitem{Mill} Miller, T.~J., \emph{On the formality of (k-1)-connected compact manifolds of dimension less than or equal to 4k-2}, Illinois J.\ Math. \textbf{23} (1979), 253--258.

\bibitem{Mi} Milnor, J., \emph{Infinite cyclic coverings}.  In:~Conference on the Topology of Manifolds, (Michigan State Univ., E.\ Lansing, Mich., 1967) pp.\ 115--133, Prindle, Weber \& Schmidt, Boston, Mass., 1968.

\bibitem{Mo} Morita, S., \emph{Geometry of differential forms}, Translations of Mathematical Monographs, \textbf{201}. Iwanami Series in Modern Mathematics. American Mathematical Society, Providence, RI, 2001.

\bibitem{Mue} M\"unzner, H.\ F., \emph{Isoparametrische Hyperfl\"achen in Sph\"aren}, Math.\ Ann. \textbf{251} (1980), no.~1, 57--71.

\bibitem{Par} Parker, J., \emph{$4$-dimensional $G$-manifolds with $3$-dimensional orbits}, Pacific J. Math. \textbf{125} (1986), no.~1, 187--204.

\bibitem{PP} Paternain, G.~and Petean, J., \emph{Minimal entropy and collapsing with curvature bounded from below}, Invent.\ Math. \textbf{151} (2003), no.~2, 415--450.

\bibitem{Pav} Pavlov, A.\ V., \emph{Estimates for the Betti numbers of rationally elliptic spaces} (Russian); translated from Sibirsk. Mat. Zh. \textbf{43} (2002), no.~6, 1332--1338 Siberian Math. J. \textbf{43} (2002), no.~6, 1080--1085.

\bibitem{Pe1} Perelman, G., \emph{The entropy formula for the Ricci flow and its geometric applications}, arXiv:math.DG/ 0211159.

\bibitem{Pe2} Perelman, G., \emph{Ricci flow with surgery on three-manifolds}, arXiv:math.DG/0303109.

\bibitem{Pe3} Perelman, G., \emph{Finite extinction time for the solutions to the Ricci flow on certain three-manifolds}, arXiv:math.DG/0307245.

\bibitem{QT} Qian, C.~and Tang, Z., \emph{Isoparametric functions on exotic spheres}, Adv.\ Math. \textbf{272} (2015), 611--629. 

\bibitem{ScSe} Sc\'ardua, B.~and Seade, J., \emph{Codimension one foliations with Bott-Morse singularities.~I}, J.\ Differential Geom. \textbf{83} (2009), no.~1, 189--212.

\bibitem{Sm1} Smale, S., \emph{Diffeomorphisms of the $2$-sphere}, Proc.\ Amer.\ Math.\ Soc. \textbf{10} (1959), 621--626.

\bibitem{Sm2} Smale, S., \emph{Generalized Poincar\'e's conjecture in dimensions greater than four}, Ann.\ of Math.\ \textbf{74} (1961) 391--406.

\bibitem{Sm3} Smale, S., \emph{On the structure of $5$-manifolds}, Ann.\ of Math. \textbf{75} (1962) 38--46.

\bibitem{St} Stolz, S., \emph{Multiplicities of Dupin hypersurfaces}, Invent.\ Math. \textbf{138} (1999), no.~2, 253--279. 

\bibitem{Sp} Spindeler, W., \emph{Nonnegatively curved quotient spaces with boundary}, Bol.\ Soc.\ Mat.\ Mex. (3) \textbf{26} (2020), no.~2, 719--747.

\bibitem{TZ} Tang, Z.~and Zhang, W., \emph{$\eta$-invariant and a problem of B\'erard-Bergery on the existence of closed geodesics}, Adv.\ Math. \textbf{254} (2014), 41--48.

\bibitem{To} Totaro, B., \emph{Curvature, diameter, and quotient manifolds}, Math.\ Res.\ Lett. \textbf{10} (2003), 191--203.

\bibitem{Wa} Wall, C.\ T.\ C., \emph{Classification problems in differential topology. V: On certain $6$-manifolds}, Invent. Math. {\bf 1} (1966), 355--374.

\bibitem{Zh} Zhubr, A.\ V.,  \emph{Closed simply connected six-dimensional manifolds: proofs of classification theorems}, Algebra i Analiz \textbf{12} (2000), 126--230.

\bibitem{Zi} Ziller, W., \emph{Riemannian manifolds with positive sectional curvature}, Geometry of manifolds with non-negative sectional curvature, 1--19, Lecture Notes in Math., \textbf{2110}, Springer, Cham, 2014.

\end{thebibliography}

\end{document}